\documentclass[11pt, reqno]{amsart}

\usepackage{amsmath,amsthm,amssymb,mathrsfs}
\usepackage[abbrev,non-sorted-cites]{amsrefs}
\usepackage{xcolor,graphicx}
\usepackage{verbatim}
\usepackage{datetime}
\usepackage{hyperref}

\theoremstyle{plain}
    \newtheorem{thm}{Theorem}[section]
    \newtheorem{lem}[thm]{Lemma}
    \newtheorem{prop}[thm]{Proposition}
    \newtheorem{cor}[thm]{Corollary} 
\theoremstyle{definition}
    \newtheorem{defn}[thm]{Definition}
    \newtheorem{assumption}[thm]{Assumption}
    \newtheorem{rmk}[thm]{Remark}
    
    \newtheorem*{ack*}{Acknowledgement}

\numberwithin{equation}{section}

\newcommand\ip[2]{\langle{#1},{#2}\rangle}

\newcommand\pl{\partial}

\newcommand\w{\wedge}

\newcommand\dt{\delta}
\newcommand\ep{\epsilon}
\newcommand\vep{\varepsilon}

\newcommand\om{\omega}

\newcommand\gm{\gamma}
\newcommand\kp{\kappa}

\newcommand\bt{\beta}

\newcommand\ld{\lambda}

\newcommand\Om{\Omega}

\newcommand\Ld{\Lambda}

\newcommand\Dt{\Delta}

\newcommand\CE{\mathcal{E}}

\newcommand\CU{\mathcal{U}}

\newcommand\BC{\mathbb{C}}

\newcommand\BR{\mathbb{R}}
\newcommand\BN{\mathbb{N}}
\newcommand\BZ{\mathbb{Z}}

\newcommand\fa{\mathfrak{a}}

\newcommand\mr{\mathring}
\newcommand\ul{\underline}
\newcommand\td{\tilde}
\newcommand\br{\bar}

\newcommand\bft{\mathbf{t}}
\newcommand\bfn{\mathbf{n}}

\newcommand\bfI{\mathbf{I}}

\newcommand\dd{{\mathrm d}}

\newcommand\phixaxis{\phi_{\text{ax}}}
\newcommand\phireg{\phi_{\text{reg}}}
\newcommand\msR{\mathscr{R}}
\newcommand\msW{\mathscr{W}}
\newcommand\msS{\mathscr{S}}
\newcommand\msL{\mathscr{L}}
\newcommand\msZ{\mathscr{Z}}
\newcommand\mass{\mathfrak{m}}

\DeclareMathOperator{\pr}{\mathsf{pr}}

\DeclareMathOperator{\Sym}{Sym}
\DeclareMathOperator{\dist}{dist}

\begin{document}

\title[Infinite-Time Singularities for Lagrangian Mean Curvature Flow]{Infinite-Time Singularities with Vanishing Mean Curvature for Lagrangian Mean Curvature Flow in Gibbons--Hawking Spaces}
\subjclass{Primary 53E10, Secondary 53C26, 53C38}

\thanks{C.-J.~Tsai is supported in part by the National Science and Technology Council grant 112-2628-M-002-004-MY4.  Part of this work was carried out when P.-H.~Lee was visiting the National Center for Theoretical Sciences.
}

\author{Ping-Hung Lee}
\address{Department of Mathematics, Columbia University, New York, NY 10027, USA}
\email{pl2975@columbia.edu}
\author{Chung-Jun Tsai}
\address{Department of Mathematics, National Taiwan University, and National Center for Theoretical Sciences, Math Division, Taipei 10617, Taiwan}
\email{cjtsai@ntu.edu.tw}


\begin{abstract}
We construct infinite-time singularities with vanishing mean curvature for Lagrangian mean curvature flow in Gibbons--Hawking spaces.  We consider circle-invariant Lagrangian \(2\)-spheres whose quotient curves are concave and are \(C^2\)-close to a collection of consecutive collinear segments.  We prove that the corresponding flow exists smoothly for all time and converges to the associated \(A_{n-1}\)-chain of special Lagrangian spheres.  Although the mean curvature converges uniformly to zero, the second fundamental form becomes unbounded.  More precisely, \(\log\max |A(\,\cdot\,,t)|\) is comparable to \(\sqrt{t}\) as \(t\to\infty\).  The proof is based on a one-parameter family of barrier curves and a detailed analysis of their asymptotics.  In this way, we refine the infinite-time convergence picture arising in the work of Lotay and Oliveira by proving curvature blow-up and estimating its rate in this semi-stable case.
\end{abstract}

\maketitle
\section{Introduction}

A special Lagrangian submanifold in a Calabi--Yau manifold is calibrated by the real part of the holomorphic volume form, and is therefore volume minimizing in its homology class \cite{Harvey-Lawson-82}.   Since the Lagrangian condition is preserved by mean curvature flow \cite{Smoczyk-96}, it is natural to ask whether the flow can be used to deform a Lagrangian submanifold to a special Lagrangian representative, or more generally to decompose it into special Lagrangian pieces.  This is the point of view behind the conjectures of Thomas--Yau \cites{Thomas-01,TY-02} and Joyce \cite{Joyce-15}.   As shown by Neves \cite{Neves-13}, for any Lagrangian one can always find arbitrarily small Hamiltonian perturbations from which the mean curvature flow develops finite-time singularities.  Therefore, an interesting aspect in this picture is that singularities may occur both at finite and infinite times.

In \cite{Lotay-Oliveira-1}, Lotay and Oliveira studied special Lagrangians and Lagrangian mean curvature flow in hyperk\"ahler four-manifolds with circle symmetry.  In the circle-invariant setting, they proved the Thomas conjecture.  Under the almost-calibrated assumption, they also proved a version of the Thomas--Yau conjecture.  A key ingredient is that a circle-invariant Lagrangian surface corresponds to a planar curve in the quotient, and Lagrangian mean curvature flow reduces to a modified curve shortening flow.

In \cite{Lotay-Oliveira-2}, Lotay and Oliveira further developed this reduction to analyze neck-pinch singularities and the flow through singularities.  They proved that, for compact embedded almost-calibrated circle-invariant Lagrangians, the flow can be continued through a finite number of finite-time neck-pinch singularities and converges, in the sense of currents, to an \(A_{n-1}\)-chain of special Lagrangian spheres.  This verifies several central features of Joyce's conjectural picture in this symmetric setting.
The works of Lotay and Oliveira draw on results concerning singularity formation of Lagrangian mean curvature flow due to Lambert--Lotay--Schulze \cite{LLS-21} and Lotay--Schulze--Sz\'ekelyhidi \cite{LSS-24}.  More recently, Sz\'ekelyhidi \cite{Gabor-26} studied the stability of the singularity formation.

{For a finite-time singularity, the blow-up rate of the mean curvature can be derived from \cite{Lotay-Oliveira-2}*{Proposition 4.2}, provided\footnote{They proved \(\phi^{-\frac12}\cdot|H|\to0\) for a finite time singularity.} one knows the blow-up rate of the Gibbons--Hawking potential \(\phi\).}  The main interest of this paper is to investigate the blow-up rate of the mean curvature for the infinite-time singularity. 
In particular, we consider the case when the components of the limiting \(A_{n-1}\)-chain of special Lagrangian spheres in \cite{Lotay-Oliveira-2}*{Theorem 1.4} have the same phase.  In this case, this chain is represented in the quotient by a collection of consecutive, collinear segments (see Remark~1.5 of \cite{Lotay-Oliveira-2}).  It turns out that the infinite-time singularity has vanishing mean curvature:

\begin{thm} \label{thm:intro}
    Let \(L\) be a circle-invariant, semi-stable Lagrangian \(2\)-sphere in a Gibbons--Hawking space. Suppose that its quotient curve is concave and \(C^2\)-close to the consecutive collinear segments corresponding to an \(A_{n-1}\)-chain of special Lagrangian spheres, where \(n\geq3\).  Then its Lagrangian mean curvature flow exists smoothly for all time and converges to the \(A_{n-1}\)-chain of special Lagrangian spheres as \(t\to\infty\).  Moreover, \(H(\,\cdot\,,t)\to 0\) uniformly as \(t\to\infty\), while
    \[ 0 < \liminf_{t\to\infty}\frac{\log\max|A(\,\cdot\,,t)|}{\sqrt{t}} \leq \limsup_{t\to\infty}\frac{\log\max|A(\,\cdot\,,t)|}{\sqrt{t}} < \infty ~. \]
\end{thm}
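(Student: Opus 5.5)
Throughout I use the Lotay--Oliveira reduction.  A circle-invariant Lagrangian in the Gibbons--Hawking space with potential \(\phi=\tfrac12\sum_i |x-p_i|^{-1}\) (after a hyperkähler rotation) corresponds to a planar curve \(\gamma\) in the plane containing the collinear points \(p_1,\dots,p_n\).  The curve has endpoints at \(p_1\) and \(p_n\), and Lagrangian mean curvature flow becomes the modified curve shortening flow
\[ \partial_t\gamma = \phi^{-1}\kappa\,\mathbf n . \]
The \(A_{n-1}\)-chain is represented by the segments \([p_i,p_{i+1}]\).  The mean curvature satisfies \(|H|^2=\phi^{-1}\kappa^2\).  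The second fundamental form \(|A|\) is controlled above and below by \(\phi^{-1/2}|\kappa|\) plus terms involving \(\phi^{-3/2}|\nabla\phi|\) evaluated along the curve.  Near an interior point \(p_j\) we have \(\phi\sim (2|x-p_j|)^{-1}\) and \(|\nabla\phi|\sim|x-p_j|^{-2}\).  Hence \(|A|\sim \dist(\gamma,p_j)^{-1/2}\) up to constants, provided the curve stays transverse to the axis there.

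The plan therefore reduces \(\log\max|A|\sim\sqrt t\) to proving \(-\log d_j(t)\sim\sqrt t\), where \(d_j(t)\) is the distance from \(\gamma_t\) to the interior points \(p_j\), \(2\le j\le n-1\).  This is why \(n\ge3\) is needed.

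\textbf{Step 1 (preservation and long-time existence).}  By the maximum principle applied to the curvature equation, concavity is preserved.  Concavity, the fixed endpoints and \(C^2\)-closeness confine \(\gamma_t\) to a thin region on one side of the line.  The curve is a graph \(u(x,t)\ge0\) over the axis with \(u(p_i)\) nonzero for interior \(i\).  The comparison principle for the graphical quasilinear parabolic equation \(u_t = \phi^{-1}u_{xx}/(1+u_x^2)\) then applies.  Since \(u>0\) at the interior points, \(\phi\) stays bounded on the curve on compact time intervals, so no finite-time singularity forms and the flow exists for all time.  Monotonicity of the area (or Lagrangian angle) together with the limit identification of \cite{Lotay-Oliveira-2} gives convergence to the collinear chain.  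Moreover \(u\to0\), and \(H\to0\) uniformly once \(\phi^{-1}\kappa^2\to0\); I obtain the latter from interior estimates away from the \(p_j\) and from a direct bound near them using \(\phi^{-1}\to0\).

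\textbf{Step 2 (barriers, the main obstacle).}  I will construct a one-parameter family of sub- and supersolutions of the graph equation near an interior point \(p_j=0\), where \(\phi\approx 1/(2|x|)\).  The local equation is \(u_t\approx 2|x|u_{xx}\).  A concave profile touching near zero has \(u_{xx}\) concentrated in \(|x|\lesssim u(0)\), where \(\phi\approx (2\sqrt{x^2+u^2})^{-1}\).  Balancing kink decay against this degenerate diffusion should give
\[ \frac{d}{dt}\,a \approx -\frac{c\,a}{\log(1/a)}, \qquad a=u(0,t). \]
Integrating, \(\log(1/a)^2\sim 2ct\), so \(\log(1/a)\sim\sqrt{t}\).

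To make this rigorous I would build explicit barrier curves \(\Gamma_\lambda\), indexed by the tip height \(\lambda\).  Away from \(p_j\) each is a concave piecewise-linear curve, and near \(p_j\) it is smoothed by a solution of the quasi-static ODE \(\phi^{-1}\kappa = \text{const}\).  I would compute the asymptotics of \(\Gamma_\lambda\) as \(\lambda\to0\), in particular the speed at the tip, which should be \(\asymp \lambda/\log(1/\lambda)\).  Time-reparametrizations \(\lambda_\pm(t)\) then produce an upper and a lower barrier.  The delicate part is controlling the logarithmic asymptotics uniformly, and matching the inner region to the outer linear region with the correct constants so that both barriers yield \(\log(1/\lambda_\pm)\asymp\sqrt t\).

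\textbf{Step 3 (conclusion).}  The barriers give \(c_1\sqrt t\le \log(1/d_j(t))\le c_2\sqrt t\).  To get the lower bound on \(\max|A|\), I use the estimate \(|A|\gtrsim d_j^{-1/2}\) at the point closest to \(p_j\), where concavity keeps the curve transverse.  For the upper bound I combine \(|A|\lesssim \phi^{-1/2}|\kappa| + d_j^{-1/2}\) with the curvature bound \(|\kappa|\lesssim d_j^{-1}\) coming from the barrier geometry.  Taking logarithms gives the two-sided \(\sqrt t\) rate in Theorem~\ref{thm:intro}.
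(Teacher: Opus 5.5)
Your overall architecture matches the paper's: preserve concavity, squeeze $u$ between a subsolution and a supersolution whose heights satisfy $\log(1/\text{height})\asymp\sqrt t$, and use $|A|\sim d^{-1/2}$. There is, however, a genuine gap in the upper bound for $\max|A|$. The same gap affects $H\to0$ and, strictly, long-time existence.

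\textbf{Missing curvature estimate.} All three conclusions need an a priori bound on $u''$ for the evolving curve, both near the interior points $a_j$ and near $\pm1$. Near $p_1,p_n$ the full term $\phi^{-3/2}|\nabla\phi|\sim r^{-1/2}$ is unbounded. Only its normal part $\phi^{-3/2}|\dd\phi(\mathbf{n})|$ is bounded, and that uses a Taylor expansion with $|u''|$ bounded.
\begin{itemize}
\item Being squeezed between barriers controls $u$ in $C^0$, and with concavity $u'$, but says nothing about $u''$. So ``$|\kappa|\lesssim d_j^{-1}$ from the barrier geometry'' does not follow.
\item Your ``direct bound near $p_j$ using $\phi^{-1}\to0$'' presupposes $|\kappa|=o(\phi^{1/2})$ there.
\item Interior parabolic estimates degenerate near $p_j$, since $\phi^{-1}\approx2\sqrt{(x-a_j)^2+u^2}\to0$. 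A rescaled estimate is not immediate either, because your two barrier heights differ by a factor $e^{-c\sqrt t}$.
\end{itemize}
The missing idea is the maximum principle for $Q=u-g(t)P$, where $P=-\partial_t u=-\phi^{-1}u''/(1+(u')^2)$ and $g(t)=\Lambda^{-1}-C\int_0^t(\|\bar w\|_{C^0}+\|\bar w\|_{C^1}^2)\,\dd\tau\ge\frac{1}{2\Lambda}$. This lower bound on $g$ holds because the supersolution decays integrably in time. The argument yields $0\le -u''\le g^{-1}u\,\phi\,(1+(u')^2)$. Here $Q(\cdot,0)\ge0$ is guaranteed by the quantitative $C^2$ hypothesis $-u_0''\le\Lambda u_0\phi$, which your proposal never uses. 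From this estimate:
\begin{itemize}
\item $u''$ is bounded, giving long-time existence via Lemma~\ref{lem:LMCF-curve-graphical};
\item $|H|\lesssim\sqrt{u\phi}\,\sqrt{u}\to0$;
\item $\phi^{-1/2}|u''|$ is bounded, so the only unbounded contribution to $|A|$ is $\phi^{-3/2}|\dd\phi(\mathbf{n})|\sim u^{-1/2}$ on the middle region.
\end{itemize}

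\textbf{Step~2 is only a heuristic.} Essentially all of the paper's work lies here. Treating one neck at a time misses that for $n\ge4$ the necks are coupled through the outer linear pieces. The kink at $a_j$ is $(Au)_j$, which depends on the heights at $a_{j-1},a_j,a_{j+1}$. So the decay law is the system $\dot u_j\log(1/u_j)\approx-c\,(Au)_j$, and a one-parameter barrier family needs a shape compatible with it. The paper uses the Perron--Frobenius eigenvector of $A$. For the non-sharp statement, taking the min or max over $j$ of $(A\ell)_j/\ell_j$ might suffice, but this has to be set up.

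There are also two construction issues:
\begin{itemize}
\item A supersolution $W_{\lambda(t)}$ with $\lambda$ decreasing has $\partial_t W<0$, so its outer part cannot be linear: there $\phi^{-1}W''=0$.
\item Your inner ``$\phi^{-1}\kappa=\text{const}$'' profiles must glue with the outer part into a genuine global sub- or supersolution on $[-1,1]$ with zero boundary values.
\end{itemize}
The paper avoids the matching by using the global problem $-w_\lambda''=\lambda\phixaxis(x,w_\lambda)w_\lambda$. It proves uniqueness, monotonicity and differentiability in $\lambda$ with $\partial_\lambda w_\lambda\ge w_\lambda/\lambda$, and the asymptotics $\lambda\log(1/\|w_\lambda\|)\to\mu$ and $\|w_\lambda\|/\|\partial_\lambda w_\lambda\|\sim\lambda^2/\mu$. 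Your heuristic $\dot a\approx -ca/\log(1/a)$ agrees with what this produces, but as written Step~2 is not a proof.
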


The precise statement is in Theorem~\ref{thm:main}.  The fact that it forms an infinite-time singularity has already been proved in \cite{Lotay-Oliveira-2}*{proof of Corollary~5.5}.  In our setting, we give another proof using the upper and lower barriers, which are also used to estimate the mean curvature and the second fundamental form.

For general (not necessarily Lagrangian) mean curvature flow, the behavior of the mean curvature at a finite-time singularity is an important issue.  Much work has been devoted to this question; see \cite{Stolarski-23} and the references therein.  In particular, Stolarski \cite{Stolarski-23} proved that certain finite-time singularities can have bounded mean curvature.  In Theorem~\ref{thm:intro}, the mean curvature does not merely remain bounded; it tends to zero uniformly.  To our knowledge, this provides the first example of an infinite-time singularity of mean curvature flow for which the mean curvature converges uniformly to zero.  This implies that for the mean curvature flow, long-time existence and vanishing mean curvature are not sufficient to conclude smooth convergence as \(t\to\infty\).

Recently, several infinite-time singularities of mean curvature flow have been constructed.  In \cites{CS-25,CS-26}, Chen and Sun constructed immortal mean curvature flows with multiplicity-two convergence.  In \cite{STW-24}, Su, Tsai and Wood constructed Lagrangian mean curvature flows which exist for all time and converge to an immersed special Lagrangian, with a polynomial blow-up rate for the second fundamental form.  It is interesting to see whether the mean curvature in these works remains bounded.

The paper is organized as follows.  Section~\ref{sec:hk-and-Lag} recalls the Gibbons--Hawking ansatz and the circle-invariant Lagrangian setting.  Section~\ref{sec:LMCF} derives the graphical form of the Lagrangian mean curvature flow and records the comparison principle used later.  Most of the results in Section~\ref{sec:hk-and-Lag} and Section~\ref{subsec:flow-basic} are taken from \cites{Lotay-Oliveira-1,Lotay-Oliveira-2} and are included for completeness.  Section~\ref{sec:barrier} constructs the barrier functions and proves their basic properties.  Section~\ref{sec:asymptotic} studies the asymptotic behavior of the barrier as the parameter goes to \(0\).  Section~\ref{sec:main-argument} proves the main theorem by comparing the flow with the barrier family.

{\it Conventions}. Throughout the paper, constants such as \(c_1,c_2,\ldots\) are local to the section in which they appear; constants with the same name in different sections are not assumed to be related, unless explicitly stated otherwise.  For a function \(u(x,t)\), we write \(u'(x,t)\) for its partial derivative with respect to \(x\).

\begin{ack*}
    The authors are grateful to Wei-Bo Su and Albert Wood for many helpful discussions.
\end{ack*}

\section{The Gibbons--Hawking Ansatz} \label{sec:hk-and-Lag}

In this section, we recall the background of the Gibbons--Hawking ansatz needed for this paper.  Since our focus is on singularity formation, throughout most of the paper, we work on a proper open subset of an ALE or ALF hyperk\"ahler \(4\)-manifold admitting a tri-Hamiltonian circle action.  We refer the reader to \cite{Lotay-Oliveira-1}*{Section~2}, \cite{Lotay-Oliveira-2}*{Section~2} and the references therein for more about the Gibbons--Hawking ansatz.

\subsection{The Hyperk\"ahler Structure}
    The ambient space \(\CU\) is a non-compact, hyperk\"ahler \(4\)-manifold admitting a tri-Hamiltonian circle action.  The action has fixed points \(p_1,\cdots,p_n\) with \(n\geq 3\), and is free on \(\CU_0 := \CU\setminus\{p_1,\cdots,p_n\}\).  The quotient space \(\Om\) is an open neighborhood of \([-1,1]\times\{0\}\times\{0\}\) in \(\BR^3\).  The quotient map \(\pr:\CU\to\Om\) maps \(p_i\) to \((a_i,0,0)\), where \(-1 = a_1 < a_2 < \cdots < a_n = 1\).

    There is a positive, smooth function \(\phi(x,y,z)\) on \(\Om\setminus\{(a_1,0,0),\cdots,(a_n,0,0)\}\) such that
    \begin{align} \label{fct:phi-regular-part}
        \phireg(x,y,z) := \phi(x,y,z) - \frac{1}{2}\sum_{i=1}^n\frac{1}{\sqrt{(x-a_i)^2+y^2+z^2}}
    \end{align}
    is non-negative, smooth and harmonic on \(\Om\).  Here, harmonicity is defined by using the standard metric on \(\BR^3\).  By shrinking \(\Om\) if necessary, we may assume that \(\phireg\) and its derivatives are bounded on \(\Om\).

    Denote by \(*\) the Hodge star operator of the standard metric on \(\BR^3\).  Note that \(\CU_0\stackrel{\pr}{\to}\Om\setminus\{(a_1,0,0),\cdots,(a_n,0,0)\}\) is a principal circle bundle.  It has a connection \(1\)-form \(\fa\) with \(\dd\fa = \pr^*(*\dd\phi)\).
    On the open dense set \(\CU_0\), the hyperk\"ahler metric is
    \begin{align} \label{eqn:hk-metric}
        \frac{1}{\phi}\fa^2 + \phi\,(\dd x^2 + \dd y^2 + \dd z^2) ~.
    \end{align}
    We will often omit \(\pr^*\).  Near each \(p_i\), the map \(\pr\) is locally modeled on the Hopf map from \(\BC^2\) to \(\BR^3\).  The manifold is oriented by \(\phi\,\fa\w\dd x\w\dd y\w\dd z\), and its hyperk\"ahler triples are
    \begin{align*}
        \fa\w\dd x + \phi\,\dd y\w\dd z ~,\quad \fa\w\dd y + \phi\,\dd z\w\dd x ~,\quad \fa\w\dd z + \phi\,\dd x\w\dd y ~.
    \end{align*}

\subsection{Circle-Invariant Lagrangian Submanifolds}

Consider the Calabi--Yau structure with the K\"ahler form \[\om = \fa\w\dd z + \phi\,\dd x\w\dd y\] and the holomorphic volume form \[\Om = (\fa+\sqrt{-1}\phi\dd z)\w(\dd x+\sqrt{-1}\dd y)~.\]

A \(2\)-dimensional submanifold \(L\) of \(\CU\) is called a Lagrangian if \(\om\) vanishes on \(L\).  From these expressions, one finds that circle-invariant Lagrangians are in one-to-one correspondence with curves in planes parallel to the \(xy\)-plane.  Unless otherwise stated, the Lagrangians will always be assumed to be smooth.  This imposes regularity conditions on the plane curve, and we will address one of them in Lemma~\ref{lem:boundedness-kappa}.  In this setting, a Lagrangian submanifold is said to be special Lagrangian if its mean curvature vanishes; see \cite{Harvey-Lawson-82}*{Section~III} for more details.

For a circle-invariant Lagrangian \(L\), let \(\gm = \pr(L)\).  Parametrize \(\gm\) by the arc-length \(s\) with respect to the flat metric.  By choosing a suitable orientation, \(\frac{\Om|_L}{\text{vol}_L}\) is a unit-complex-valued function that coincides (under \(\pr\)) with \((\dd x + \sqrt{-1}\dd y)(\frac{\dd}{\dd s}\gm)\).  Its argument is the so-called Lagrangian angle, and is well-defined up to \(2\pi\BZ\).  Denote the Lagrangian angle by \(\bt:\gm\to\BR/2\pi\BZ\):
\begin{align}
    e^{\sqrt{-1}\bt} &= (\dd x+\sqrt{-1}\dd y)(\frac{\dd}{\dd s}\gm) = \frac{\Om|_L}{\text{vol}_L} ~.
\end{align}

According to \cite{Lotay-Oliveira-1}*{Proposition~4.6}, the second fundamental form and mean curvature of \(L\) can be expressed in terms of the curve \(\gm\) and the function \(\phi\).
\begin{lem} \label{lem:second-fform-mean-curvature}
    Let \(\bft = \frac{\dd}{\dd s}\gm\) and \(\bfn = \pl_z\times\frac{\dd}{\dd s}\gm\).  Denote the curvature of the plane curve by \(\kp := \ip{\frac{\dd}{\dd s}\bft}{\bfn}_{\BR^3} = \frac{\dd}{\dd s}\bt\).  The norms of the second fundamental form and the mean curvature of \(L\) satisfy the following estimates:
    \begin{align}
        |A|^2 &\leq \frac{1}{\phi} \kp^2 + \frac{3}{\phi^{3}}\bigl[ (\dd\phi(\bfn))^2 + (\dd\phi(\pl_z))^2 \bigr] \quad\text{and} \label{eqn:second-fform} \\
        |H|^2 &= \frac{1}{\phi} \kp^2 ~. \label{eqn:mean-curvature}
    \end{align}
\end{lem}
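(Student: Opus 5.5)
The plan is to compute everything directly from the metric \eqref{eqn:hk-metric}.  I would use the orthonormal coframe adapted to \(L\), with dual frame
\[
  e_0 = \phi^{1/2}\,\xi,\qquad e_1 = \phi^{-1/2}\,\bft,\qquad e_2 = \phi^{-1/2}\,\bfn,\qquad e_3 = \phi^{-1/2}\,\pl_z,
\]
all horizontal lifts with respect to \(\fa\), where \(\xi\) is the generator of the circle action.  Since \(L = \pr^{-1}(\gm)\) with \(\gm\) lying in a plane \(z=\text{const}\), the tangent space of \(L\) is spanned by \(e_0,e_1\), and the normal space by \(e_2,e_3\).  Moreover \(J e_1 = e_2\) and \(J e_0 = e_3\) up to sign, which is consistent with \(\om = \fa\w\dd z+\phi\,\dd x\w\dd y\).

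I would then compute the Levi-Civita connection.  This can be done with Cartan's structure equations for the coframe \(\phi^{-1/2}\fa,\ \phi^{1/2}\dd x,\ \phi^{1/2}\dd y,\ \phi^{1/2}\dd z\), using \(\dd\fa = *\dd\phi\).  Alternatively, one can treat the metric on the base as conformally flat and add the O'Neill-type terms coming from the fibre length \(\phi^{-1/2}\) and the curvature \(\dd\fa\).  The components of the second fundamental form are \(h^\alpha_{ij} = \ip{\nabla_{e_i}e_j}{e_\alpha}\) for \(i,j\in\{0,1\}\) and \(\alpha\in\{2,3\}\).  Since \(L\) is Lagrangian, \(h\) is fully symmetric under \(J\), so the data reduce to the symmetric tensor \(h_{ijk} = \ip{\nabla_{e_i}e_j}{Je_k}\).

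Working these out, I expect the following formulas:
\begin{itemize}
  \item \(h_{111} = \phi^{-1/2}\kappa + c\,\phi^{-3/2}\dd\phi(\bfn)\), from the conformal factor applied to the planar curve;
  \item \(h_{001} = c'\,\phi^{-3/2}\dd\phi(\bfn)\), from the variation of the fibre length in the normal direction;
  \item the components involving \(e_3 = Je_0\), namely \(h_{000}\) and \(h_{011}\), are multiples of \(\phi^{-3/2}\dd\phi(\pl_z)\), coming from the fibre length and from \(\dd\fa(\bft,\bfn) = \dd\phi(\pl_z)\).
\end{itemize}
The key identity is \(H = \sum_i h_{ii\,\cdot}\).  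The \(\dd\phi\) terms must cancel in the trace, and this is forced by the Lagrangian angle: \(H = J\nabla\bt\) and \(\bt\) depends only on \(s\), so \(|H| = |\nabla^L\bt| = \phi^{-1/2}|\kappa|\).  This gives \eqref{eqn:mean-curvature} immediately and also serves as a check on the constants.

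For \eqref{eqn:second-fform}, I would bound \(|A|^2 = \sum h_{ijk}^2\) over the symmetric tensor, which has four distinct components counted with multiplicities \(1,3,3,1\).  Using the trace relations to express, for instance, \(h_{001} = -h_{111} + \phi^{-1/2}\kappa\), and then applying Cauchy--Schwarz, gives a bound of the form \(\phi^{-1}\kappa^2 + C\phi^{-3}\bigl[(\dd\phi(\bfn))^2 + (\dd\phi(\pl_z))^2\bigr]\).  With the specific coefficients (the expected factor is \(\tfrac12\)), \(C=3\) should come out.

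The main obstacle is the bookkeeping of the constants in the connection computation, especially the contribution of \(\dd\fa\) to the \(e_3\) components.  I would first verify these constants on the Taub--NUT/flat model \(\phi = 1/(2r)\), where \(\BR^4\) is flat and lines through the origin give planes, so that the formulas can be checked against known cases.
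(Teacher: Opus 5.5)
Your route differs from the paper's, which proves nothing here and simply quotes Proposition~4.6 of Lotay--Oliveira's first paper. A direct computation in the coframe $\phi^{-1/2}\fa,\ \phi^{1/2}\dd x,\ \phi^{1/2}\dd y,\ \phi^{1/2}\dd z$ is a legitimate substitute. Your proof of \eqref{eqn:mean-curvature} from $H=J\nabla\bt$ is correct and complete: $\bt$ is constant along the fibres and $e_1(\bt)=\phi^{-1/2}\kp$, so $|H|=|\dd\bt|_L=\phi^{-1/2}|\kp|$. Two minor points. First, $e_0=\phi^{1/2}\xi$ is vertical, not a horizontal lift. Second, with the conventions as literally written, $\dd\fa=*\dd\phi$ and $\om=\fa\w\dd z+\phi\,\dd x\w\dd y$ give $\dd\om=2\,\pl_z\phi\,\dd x\w\dd y\w\dd z$. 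You must therefore flip one sign before your $h_{ijk}$ comes out totally symmetric; norms are unaffected.

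The genuine gap is your last step. Write $f_{\bfn}=\phi^{-3/2}\dd\phi(\bfn)$ and $f_z=\phi^{-3/2}\dd\phi(\pl_z)$. Carrying out the computation you describe gives, up to convention-dependent signs,
\[
h_{111}=\phi^{-1/2}\kp-\tfrac12 f_{\bfn},\qquad h_{001}=\tfrac12 f_{\bfn},\qquad h_{000}=-h_{011}=\tfrac12 f_z .
\]
The first comes from the conformal change of geodesic curvature. The second comes from $\nabla_{e_0}e_0=\tfrac12\nabla\log\phi$, since $\xi$ is Killing with $|\xi|^2=\phi^{-1}$. Hence, exactly,
\[
|A|^2=\frac{\kp^2}{\phi}-\frac{\kp\,\dd\phi(\bfn)}{\phi^{2}}+\frac{(\dd\phi(\bfn))^2+(\dd\phi(\pl_z))^2}{\phi^{3}} .
\]
The cross term is invariant under $\bfn\mapsto-\bfn$ and has no sign. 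So no Cauchy--Schwarz argument can give coefficient exactly $1$ in front of $\phi^{-1}\kp^2$. For any $C$, $|A|^2-\phi^{-1}\kp^2-C(f_{\bfn}^2+f_z^2)>0$ as soon as $-\phi^{-1/2}\kp f_{\bfn}>(C-1)(f_{\bfn}^2+f_z^2)$.

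Your own flat check exposes this. Take $\phi=1/(2r)$ and let $\gm$ be a circle of radius $\rho$ in $\{z=0\}$ whose nearest point to the origin is at distance $d$. At that point $|A|^2=2d/\rho^2+2/\rho+2/d$, while the right-hand side of \eqref{eqn:second-fform} is $2d/\rho^2+6/d$. The latter is smaller once $d>2\rho$, and by scaling the same failure occurs near any $p_i$. As a check, the exact formula gives $|A|^2=2/R$ for the Clifford torus over a circle of radius $R$ centred at the origin. It also gives $|A|^2=2/c$ at the neck of the Harvey--Lawson cylinder over a line at distance $c$. So ``$C=3$ should come out'' cannot be established, and \eqref{eqn:second-fform} as written is not what the computation yields.

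What your computation does prove follows because $a^2-ab+b^2$ has eigenvalues $\tfrac12$ and $\tfrac32$:
\[
\tfrac12\bigl(\phi^{-1}\kp^2+f_{\bfn}^2\bigr)+f_z^2\ \le\ |A|^2\ \le\ \tfrac32\bigl(\phi^{-1}\kp^2+f_{\bfn}^2\bigr)+f_z^2 .
\]
This two-sided comparability is all the paper actually uses later, in the equivalence in Lemma~\ref{lem:LMCF-curve-basic} and in the sandwich \eqref{eqn:fform-sandwich}. That is the statement your argument should target.
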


From \eqref{eqn:mean-curvature}, the pre-image of a straight-line segment in \(\BR^3\) is a {special} Lagrangian.  For \([-1,1]\times\{0\}\times\{0\}\), its pre-image is an \(A_{n-1}\)-chain of {special} Lagrangian spheres, and \(p_i\) (for \(1<i<n\)) is a transverse intersection point of two adjacent spheres.  This \(A_{n-1}\)-chain of {special} Lagrangian spheres will be the infinite-time limit of the Lagrangian mean curvature flow considered in this paper.

For a circle-invariant Lagrangian \(2\)-sphere passing through \(p_1\) and \(p_n\), its image under \(\pr\) is a curve in the \(xy\)-plane connecting \((a_1,0) = (-1,0)\) to \((a_n,0) = (1,0)\) and avoiding the intermediate points \((a_2,0),\cdots,(a_{n-1},0)\).
For such a Lagrangian \(2\)-sphere, the angle \(\bt\) can be chosen to be a smooth real-valued function; see \cite{Lotay-Oliveira-1}*{Example~5.5}.  We now examine the behavior of \(\kp\) (the curvature of the plane curve) near \((a_1,0)\) and \((a_n,0)\).
Let \(r\) be the \(\BR^2\)-distance to \((a_1,0)\).  On the \(xy\)-plane, \(\phi = \frac{1}{2r} + O(1)\) as \(r\to0\).  One infers from the boundedness of \(|H|^2\) and \eqref{eqn:mean-curvature} that \(\kp = O(r^{-\frac{1}{2}})\) as \(r\to0\).  In fact, \(\kp\) cannot blow up as \(r\to 0\).  The underlying reason is that \(p_i\) are coordinate singularities of the metric \eqref{eqn:hk-metric}.

\begin{lem} \label{lem:boundedness-kappa}
    For a smooth, circle-invariant Lagrangian \(2\)-sphere \(L\) passing through \(p_1\) and \(p_n\), the plane curve \(\gm = \pr(L)\) has bounded curvature.
\end{lem}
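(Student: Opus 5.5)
The plan is to use the smooth structure of \(\CU\) at \(p_1\), where \(\pr\) is only a coordinate singularity, rather than estimating \(\kp\) from the singular expression for \(\phi\). I expect to show that, near \((a_1,0)\), the curve \(\gm\) is the image of a smooth \emph{regular} curve parametrized by \(u=\rho^2\), with \(\rho\) a radial coordinate on \(L\) at \(p_1\). A smooth regular curve has bounded curvature up to its endpoint. The point \(p_n\) is handled identically, and away from the endpoints boundedness follows from smoothness and compactness.

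\emph{Step 1: local model.} Near \(p_1\), choose local complex coordinates \(w=(w_1,w_2)\) on \(\CU\) in which the circle action is \((e^{i\theta}w_1,e^{-i\theta}w_2)\). I will take from the Hopf model the facts that the moment map \(\pr=(x,y,z)\) is a smooth map on a neighbourhood of \(p_1\) in \(\CU\), and that
\[ |\pr(w)-(a_1,0,0)|\ \ge\ c\,|w|^2 \]
for some \(c>0\) near \(p_1\), as for the Hopf map, whose norm is comparable to \(|w|^2\).

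\emph{Step 2: invariant functions on \(L\).} Since \(L\) is a smooth embedded invariant surface through the fixed point \(p_1\), the isotropy representation on \(T_{p_1}L\) is a rotation, and it is effective of weight \(\pm1\) because the action is free on \(\CU_0\). By the equivariant slice theorem (linearization of a circle action near a fixed point), there is a smooth coordinate \(\xi\in\BR^2\) on \(L\) near \(p_1\) in which the action is the standard rotation. The functions \(x|_L\) and \(y|_L\) are smooth and circle-invariant. By the Schwarz/Whitney theorem, smooth rotation-invariant functions on \(\BR^2\) are smooth functions of \(u=|\xi|^2\). Hence
\[ \gm(u)=\bigl(F(u),G(u)\bigr), \qquad u\in[0,\ep), \]
with \(F,G\) smooth up to \(u=0\). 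This \(\gm(u)\) parametrizes \(\gm\) near \((a_1,0)\), because each orbit in \(L\) corresponds to a single value of \(u\).

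\emph{Step 3: regularity at \(u=0\).} Since \(\xi\) is a smooth coordinate on \(L\), which is a submanifold of \(\CU\), \(|w|^2\) and \(|\xi|^2=u\) are comparable on \(L\) near \(p_1\). Combining this with Step 1 gives \(|\gm(u)-(a_1,0)|\ge c'u\). Since \(\gm(0)=(a_1,0)\), we get \(|\gm'(0)|\ge c'>0\). Shrinking \(\ep\), the curve \(\gm\) is then a smooth regular curve on \([0,\ep)\), smooth up to the endpoint. Its curvature
\[ \kp=\frac{F'G''-G'F''}{(F'^2+G'^2)^{3/2}} \]
is therefore continuous on \([0,\ep)\), hence bounded near \((a_1,0)\). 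Also, by Step 2, the arc-length parameter \(s\) is a smooth function of \(u\) with \(\dd s/\dd u>0\), so this bound is the same as boundedness of \(\kp=\frac{\dd}{\dd s}\bt\).

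\emph{Main obstacle.} The delicate point is Step 2: showing that \(\gm\) is smooth in \(u=\rho^2\), and not merely in \(\rho\). A naive parametrization \(\gm(\rho)=\pr(\sigma(\rho))\) along a slice \(\sigma\) of \(L\) contains odd terms \(\rho^3\). These terms would make \(\kp\sim\rho^{-1}\cdot\rho\) only borderline after reparametrizing by arc length \(\sim\rho^2\), and they could even make \(\kp\) blow up, matching the weaker a priori bound \(\kp=O(r^{-1/2})\). The invariance argument is what kills these odd terms. So the care goes into justifying the equivariant linearization and the invariant-function theorem, and into checking that the isotropy weight on \(T_{p_1}L\) is nonzero.
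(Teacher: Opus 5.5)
Your proof is correct, and it takes a genuinely different route from the paper's. The paper argues through the mean curvature. By \eqref{eqn:mean-curvature}, \(\kp^2=\phi\,|H|^2\) with \(\phi=\frac{1}{2r}+O(1)\). Since \(|H|=|\dd\bt|\) and the mean curvature vector is circle-invariant, it must vanish at \(p_1\), because the only invariant vector in \(T_{p_1}\CU\) is zero. Taylor's theorem then gives \(|H(p)|\lesssim\dist_L(p,p_1)\sim\sqrt{r(p)}\), and hence \(\kp=O(1)\).

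You never use the Lagrangian condition or Lemma~\ref{lem:second-fform-mean-curvature}. Instead, Bochner linearization on \(L\) together with Whitney's theorem on even functions shows that \(\gm\) is a smooth function of \(u=|\xi|^2\). The Hopf-type bound \(|\pr(w)-(a_1,0,0)|\gtrsim|w|^2\) then forces \(\gm'(0)\neq0\); this bound is the same ingredient as the paper's comparison \(\dist_L(p,p_1)\sim\sqrt{r(p)}\). Both proofs rest on the nontriviality of the isotropy action at \(p_1\). The paper uses it to force \(H(p_1)=0\), while you use it to rule out half-integer powers of \(u\) in \(\gm\).

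The paper's argument is shorter given the curvature formulas. It also ties directly to the vanishing of \(H\) at \(p_1\) and \(p_n\), which is mentioned again in Assumption~\ref{asmp:LMCF}. Yours relies on two standard but nontrivial theorems, but it proves more. It shows that \(\gm\) is a regular curve that is smooth up to its endpoints, so every \(s\)-derivative of \(\kp\) is bounded there. The existence of the endpoint tangent in Remark~\ref{rmk:end-point-tangent} is then immediate. Finally, your argument applies verbatim to any smooth invariant surface through a fixed point, Lagrangian or not, with \(\pr(L)\) then a regular space curve.
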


\begin{proof}
It suffices to show that \(\kp\) is bounded near \((a_1,0)\).  By \eqref{eqn:mean-curvature}, it is equivalent to \(|H| = O(r^{\frac{1}{2}})\) as \(r\to0\).
According to \cite{Harvey-Lawson-82}*{Section~III.2.D}, \(|H|^2 = |\dd \bt|^2\), where the norms are computed by using the hyperk\"ahler metric on \(\CU\).  In this setting, the mean curvature vector must be circle-invariant.  However, at \(T_{p_1}\CU\), the only circle-invariant vector is the zero vector, and \(H(p_1) = 0 = \dd\bt(p_1)\).  It follows from Taylor's theorem on \(\dd\bt\) that for \(p\in L\) close to \(p_1\), \( |\dd\bt(p)| \lesssim \dist_L(p,p_1) \).  From \eqref{eqn:hk-metric}, it is not hard to see that \(\dist_L(p,p_1)\) is comparable to \(\sqrt{r(p)}\), and this lemma follows.
\end{proof}

\begin{rmk} \label{rmk:end-point-tangent}
    Since $\frac{\dd}{\dd s}\beta=\kappa$, $\bt$ is Lipschitz in $s$.  It follows that \(\bft\) has a limit at the endpoints \((a_1,0)\) and \((a_n,0)\).
\end{rmk}

\section{Lagrangian Mean Curvature Flow} \label{sec:LMCF}

\subsection{The Evolution Equation and Existence Criterion} \label{subsec:flow-basic}

We continue with the setting introduced in Section~\ref{sec:hk-and-Lag}.
\begin{assumption} \label{asmp:LMCF}
    Suppose that \(L_0\) is a circle-invariant Lagrangian sphere in \(\CU\) passing through \(p_1\) and \(p_n\), whose mean curvature flow always stays within \(\CU\).  Denote by  \(\{L_t\}_{0\leq t}\) the Lagrangian mean curvature flow with initial condition \(L_0\).

    Since the mean curvature flow preserves the Lagrangian condition and the circle invariance, \(\gm(\,\cdot\,,t) := \pr(L_t)\) is a curve on the \(xy\)-plane and determines \(L_t\).  Since each \(L_t\) is also a sphere, each \(\gm(\,\cdot\,,t)\) must be a curve connecting \((a_1,0)\) and \((a_n,0)\).  It follows that the mean curvature of \(L_t\) vanishes at \(p_1\) and \(p_n\), which can also be seen from the proof of Lemma~\ref{lem:boundedness-kappa}.
\end{assumption}

Here is the evolution equation and the existence criterion.
\begin{lem}[\cite{Lotay-Oliveira-1}*{Proposition~4.5 and Lemma~6.3} and \cite{Lotay-Oliveira-2}*{Proposition~4.1}] \label{lem:LMCF-curve-basic}
    Under Assumption~\ref{asmp:LMCF}, parametrize the curve \(\gm(\,\cdot\,,t)\) by the Euclidean arc-length \(s\).  Then, the family of curves satisfies
    \begin{align} \label{eqn:LMCF-curve}
        \frac{\pl}{\pl t}\gm &= \frac{1}{\phi}\frac{\dd^2\gm}{\dd s^2} = \frac{1}{\phi}\kp\bfn  ~.
    \end{align}
    Moreover, the flow exists as long as \eqref{eqn:second-fform} is bounded, and the curves do not reach \((a_i,0)\) for \(i=2,\ldots,n-1\).  The boundedness of \eqref{eqn:second-fform} is equivalent to the boundedness of \(\phi^{-\frac{1}{2}}\kp\), \(\phi^{-\frac{1}{2}}\dd\log\phi(\bfn)\) and \(\phi^{-\frac{1}{2}}\dd\log\phi(\pl_z)\).
\end{lem}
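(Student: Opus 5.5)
The plan is to verify the evolution equation directly from the circle-invariant structure, and then to reduce the existence criterion to standard mean curvature flow theory together with an explicit computation of \(A\).

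\emph{Evolution equation.} The flow preserves the Lagrangian condition and the circle symmetry, so \(L_t=\pr^{-1}(\gm(\,\cdot\,,t))\) with \(\gm(\,\cdot\,,t)\) in a plane \(z=\text{const}\), as in Assumption~\ref{asmp:LMCF}.
\begin{itemize}
\item By \eqref{eqn:hk-metric}, the metric induced on \(L\) is \(\phi^{-1}\fa^2+\phi\,\dd s^2\). The Lagrangian angle \(\bt\) depends only on \(s\), so its gradient on \(L\) is \(\nabla\bt=\phi^{-1}\kp\,\widetilde{\pl_s}\), where the tilde denotes the horizontal lift.
\item By \cite{Harvey-Lawson-82}, \(H=J\nabla\bt\). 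From \(\om=\fa\w\dd z+\phi\,\dd x\w\dd y\), the complex structure \(J\) preserves the horizontal lift of the \(xy\)-plane and rotates it by \(\pi/2\). Hence \(J\widetilde{\bft}=\widetilde{\bfn}\), up to the orientation convention fixed by the choice of \(\bt\).
\item Therefore \(\pr_*H=\phi^{-1}\kp\bfn\), which has no \(\pl_z\) component. Projecting the flow \(\pl_t F=H\) gives \eqref{eqn:LMCF-curve}, up to a tangential reparametrization that does not change the curves. Since \(\dd^2\gm/\dd s^2=\kp\bfn\), the second form of \eqref{eqn:LMCF-curve} follows.
\end{itemize}

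\emph{Existence criterion.} The ambient space \(\CU\) is smooth, including at the \(p_i\), and has bounded geometry on the relevant region. By the standard criterion for compact mean curvature flow, a smooth solution continues as long as \(|A|\) stays bounded. The only other way the circle-invariant description can fail is if \(\gm(\,\cdot\,,t)\) reaches an interior point \((a_i,0)\), \(2\le i\le n-1\). In that case \(L_t\) would pass through a fixed point in its interior, and the sphere would degenerate (a neck pinch). Away from these points the curve stays in the region where \(\pr\) is a principal bundle, so the reduction above remains valid. I would cite \cite{Lotay-Oliveira-2}*{Proposition~4.1} for this part, or reprove it by this argument.

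\emph{Equivalence of boundedness conditions.}
\begin{itemize}
\item One direction follows from \eqref{eqn:second-fform}, once we note that \(\phi^{-3/2}\dd\phi=\phi^{-1/2}\dd\log\phi\).
\item For the converse, I would recompute \(A\) in the orthonormal frame \(e_1=\phi^{-1/2}\widetilde{\pl_s}\), \(e_2=\phi^{1/2}\xi\), where \(\xi\) is the generator of the circle action. Following \cite{Lotay-Oliveira-1}*{Proposition~4.6}, I expect the following components.
  \begin{itemize}
  \item \(A(e_2,e_2)\) has normal component a nonzero constant multiple of \(\phi^{-3/2}\dd\phi(\bfn)\).
  \item \(A(e_1,e_2)\) is a constant multiple of \(\phi^{-3/2}\dd\phi(\pl_z)\).
  \item \(A(e_1,e_1)\) is \(\phi^{-1/2}\kp\) plus a multiple of \(\phi^{-3/2}\dd\phi(\bfn)\).
  \end{itemize}
\item With these components, bounding \(|A|\) first bounds the \(\dd\phi(\bfn)\) and \(\dd\phi(\pl_z)\) terms. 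Subtracting the \(\dd\phi(\bfn)\) term from \(A(e_1,e_1)\) then bounds \(\phi^{-1/2}\kp\).
\end{itemize}

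The main obstacle is this exact component computation of \(A\), namely the Christoffel symbols of the Gibbons--Hawking metric along \(L\). I would do it using the Koszul formula with \(\dd\fa=*\dd\phi\), restricted to \(\dd z=0\) along \(\gm\).
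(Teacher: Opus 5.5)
The paper gives no proof of this lemma; it is quoted from \cite{Lotay-Oliveira-1} and \cite{Lotay-Oliveira-2}. Your proposal is therefore a self-contained reconstruction rather than a variant of an argument in the text, and it is correct.

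Computing $\nabla\bt=\phi^{-1}\kp\,\td{\pl_s}$ and using $H=J\nabla\bt$ with $J\td{\bft}=\td{\bfn}$ gives $H=\phi^{-1}\kp\,\td{\bfn}$. This yields \eqref{eqn:LMCF-curve}, and also \eqref{eqn:mean-curvature}, at once. The continuation part is indeed the standard $|A|$-criterion for compact flows in a smooth ambient space.

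In fact, the hypothesis that the curves avoid $(a_i,0)$ is implied by bounded $|A|$. At the point of $\gm$ closest to $(a_i,0)$, say at distance $r$, the normal $\bfn$ is radial. There $\phi^{-3/2}|\dd\phi(\bfn)|\approx\sqrt2\,r^{-1/2}$, so $|A|\to\infty$ if $r\to0$. This is the same mechanism as in \eqref{eqn:lower-bound-fform-main}.

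Your list of components of $A$ is slightly off, and the ``main obstacle'' is much lighter than a Koszul computation. Set up the following.
\begin{itemize}
\item Put $e_1=\phi^{-1/2}\td{\bft}$ and $e_2=\phi^{1/2}\xi$. Then $Je_1=\phi^{-1/2}\td{\bfn}$ and $Je_2=\phi^{-1/2}\td{\pl_z}$.
\item Let $h_{ijk}=\ip{A(e_i,e_j)}{Je_k}$. This is totally symmetric because $L$ is Lagrangian in a K\"ahler manifold.
\item Write $a=\phi^{-1/2}\kp$, $b=\phi^{-3/2}\dd\phi(\bfn)$ and $c=\phi^{-3/2}\dd\phi(\pl_z)$.
\end{itemize}

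The Killing identity $\nabla_\xi\xi=-\frac12\nabla|\xi|^2$, with $|\xi|^2=\phi^{-1}$, gives $h_{221}=\frac12 b$ and $h_{222}=\frac12 c$. The relation $H=a\,Je_1$ then gives $h_{111}=a-\frac12 b$ and $h_{112}=-\frac12 c$.

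Consequently $A(e_1,e_2)$ is not a multiple of $\phi^{-3/2}\dd\phi(\pl_z)$ alone. It also has the $Je_2$-component $h_{122}=h_{221}=\frac12 b$. Likewise, $A(e_1,e_1)$ and $A(e_2,e_2)$ carry $Je_2$-components $-\frac12 c$ and $\frac12 c$. This does not damage your converse argument, which only uses that $|A|$ dominates individual components.

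The same data also give the exact identity
\[ |A|^2=h_{111}^2+3h_{112}^2+3h_{122}^2+h_{222}^2=a^2-ab+b^2+c^2 . \]
Hence $\frac12(a^2+b^2)+c^2\le|A|^2\le\frac32(a^2+b^2)+c^2$. This proves both directions of the equivalence with no Christoffel symbols. It is also the kind of two-sided bound used later in \eqref{eqn:fform-sandwich}.

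This identity is safer than invoking \eqref{eqn:second-fform} for the easy direction. Because of the cross term $-ab$, that inequality holds only up to a constant factor. For example, it fails where $a=-3b$ and $c=0$: there $|A|^2=13b^2$, while the right-hand side is $12b^2$.
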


Lemma~\ref{lem:LMCF-curve-basic} is the geometric form of the reduced Lagrangian mean curvature flow for the base curve.  We will mostly use the non-parametric form in this paper.
\begin{lem}[\cite{Lotay-Oliveira-1}*{Lemma~6.5}] \label{lem:LMCF-curve-graphical}
    Under Assumption~\ref{asmp:LMCF}, if \(\gm(\,\cdot\,,0)\) is given by the graph of a function \(u_0:[-1,1]\to\BR\) with \(\sup_{-1<x<1}|u'_0(x)|<\infty\),  then \(\gm(\,\cdot\,,t)\) remains graphical over \([-1,1]\).
    By writing \(\gm(\,\cdot\,,t)\) as \((x,u(x,t))\), the function \(u(x,t)\) satisfies
    \begin{align} \label{eqn:LMCF-graphical}
        \frac{\pl u}{\pl t} &= \frac{1}{\phi}\frac{u''}{1+(u')^2} ~,
    \end{align}
    with \(u(\pm1,t) = 0\), and
    \begin{align*}
        \sup_{-1<x<1}|u'(x,t)| \leq \sup_{-1<x<1}|u'_0(x)| ~.
    \end{align*}

    In this setting, if \(u(a_i,t)\neq0\) for \(i=2,\ldots,n-1\) and \(u''(x,t)\) remains bounded, then the flow exists.
\end{lem}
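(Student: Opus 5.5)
The plan is to derive everything from the geometric flow \eqref{eqn:LMCF-curve} in Lemma~\ref{lem:LMCF-curve-basic}, then use the maximum principle and stationary barriers.

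\textbf{Graphical equation.} As long as \(\gm(\,\cdot\,,t)\) is a graph \((x,u(x,t))\), I will reparametrize by \(x\) instead of \(s\). Tangential reparametrizations do not change the normal velocity, so
\[
\frac{\pl u}{\pl t}\cdot\frac{1}{\sqrt{1+(u')^2}} = \frac{\kp}{\phi}, \qquad \kp=\frac{u''}{(1+(u')^2)^{3/2}} ,
\]
where \(\kp\) is the signed curvature. This gives \eqref{eqn:LMCF-graphical}. The endpoints of \(\gm\) are \((\pm1,0)\) for all \(t\), because \(L_t\) is a sphere through \(p_1\) and \(p_n\) (Assumption~\ref{asmp:LMCF}). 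Hence \(u(\pm1,t)=0\).

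\textbf{Gradient bound.} Set \(M=\sup|u_0'|\). The linear functions \(\ell_\pm(x)=\pm M(x+1)\) and \(\pm M(1-x)\) satisfy \(\ell''=0\), so they are stationary solutions of \eqref{eqn:LMCF-graphical}. Initially \(u_0\) lies between them, with equality or the correct inequality at \(x=\pm1\). The comparison principle for \eqref{eqn:LMCF-graphical} then keeps \(u(\,\cdot\,,t)\) between them. The equation is degenerate only where \(1/\phi\to0\), namely at the endpoints, and the comparison still holds there because the boundary values are fixed. Consequently \(|u(x,t)|\le M\min(1+x,1-x)\).

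By Remark~\ref{rmk:end-point-tangent} the slopes \(u'(\pm1,t)\) exist, and the barrier bound above gives \(|u'(\pm1,t)|\le M\).

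In the interior, \(w=u'\) satisfies the divergence-form equation
\[
w_t=\Bigl(\frac{w'}{\phi(1+w^2)}\Bigr)' .
\]
At an interior spatial maximum of \(w\) we have \(w'=0\) and \(w''\le0\), so \(w_t\le0\); the same argument applies to minima. Hence \(\sup|w|\) cannot exceed its initial or boundary values, which are both at most \(M\).

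A continuity argument in \(t\) then shows that the graphical property is preserved. The gradient bound is uniform, so the graph cannot become vertical, and the set of times at which the curve is graphical is both open and closed.

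\textbf{Existence criterion.} By Lemma~\ref{lem:LMCF-curve-basic} it suffices to bound \(\phi^{-1/2}\kp\), \(\phi^{-1/2}\dd\log\phi(\bfn)\) and \(\phi^{-1/2}\dd\log\phi(\pl_z)\), given that \(u(a_i,t)\ne0\).

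First, \(|\kp|\le|u''|\), which is bounded, and \(\phi^{-1/2}\) is bounded because \(\phi\) is positive and bounded below. This handles the first quantity.

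Second, on the plane \(z=0\) the singular part of \(\phi\) is even in \(z\). Therefore \(\pl_z\phi=\pl_z\phireg\), which is bounded.

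Third, away from \((\pm1,0)\) and the points \(a_i\), \(\phi\) is smooth with bounded derivatives, so the normal term is bounded there. Near \((\pm1,0)\), write \(r\) for the distance to the endpoint. The singular part of \(\nabla\phi\) is radial of size \(\sim r^{-2}\). Since the curve has bounded curvature and passes through the endpoint, its normal is nearly orthogonal to the radial direction, with error \(O(r)\). Hence \(\dd\phi(\bfn)=O(r^{-1})\). Combined with \(\phi\sim\frac1{2r}\), this gives
\[
\phi^{-3/2}\,\dd\phi(\bfn)=O(r^{1/2}),
\]
which is bounded.

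\textbf{Main obstacle.} The delicate step is the endpoint analysis. The equation degenerates at \(x=\pm1\), so the boundary gradient must come from the line barriers and the comparison principle rather than from standard parabolic boundary regularity. In the curvature criterion, the bound \(\dd\phi(\bfn)=O(r^{-1})\) relies on the fact that \(\kp\) is bounded, which holds as long as \(u''\) is.
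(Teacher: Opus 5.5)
Your proposal is correct. The derivation of \eqref{eqn:LMCF-graphical} and the existence criterion essentially match the paper; your observation that the normal is orthogonal to the radial direction up to \(O(r)\) is exactly the paper's Taylor identity \((x+1)u'-u=\frac12(x+1)^2u''(\hat x)\).

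The genuine difference is in the gradient bound and the preservation of graphicality.

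The paper works upstairs. It chooses a branch of the Lagrangian angle with \(|\bt|\le\arctan M<\frac{\pi}{2}\) on \(L_0\). It then applies the maximum principle to Smoczyk's equation \(\pl_t\bt=\Dt_{L_t}\bt\) on the closed surface \(L_t\). Since \(p_1,p_n\) are interior smooth points of \(L_t\), there is no boundary to treat, and graphicality and \(|u'|\le M\) follow at once.

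You work entirely in the one-dimensional quotient. There \(w=u'=\tan\bt\) satisfies an interior maximum principle. The equation degenerates at \(x=\pm1\), however, so you need the stationary line barriers to control \(u'(\pm1,t)\), plus a separate open--closed argument for graphicality. This route avoids Smoczyk's identity and gives the extra bound \(|u|\le M\min\{1+x,1-x\}\). It still relies on regularity upstairs: both the continuity of \(u'\) up to \(x=\pm1\), uniformly in \(t\), and the openness step come from smoothness of \(\bt\) on \(L_t\).

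One small point: the line comparison is not an instance of Lemma~\ref{lem:avoidance} as stated, since the lines do not vanish at both endpoints. Because \(\ell''=0\), however, \(f=\ell-u\) satisfies \(f_t=f''/(\phi(1+(u')^2))\). So Lemma~\ref{lem:max-principle} applies directly with \(c=0\), and no \(C^2\) or Lipschitz hypotheses are needed.
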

\begin{proof}
Since \(\sup_{-1<x<1}|u_0'(x)|<\infty\), we may choose a suitable branch of the Lagrangian angle so that \(-\frac{\pi}{2}<\bt<\frac{\pi}{2}\) on \(L_0\).  According to \cite{Smoczyk-99}*{Lemma~2.4}, the Lagrangian angle satisfies \(\frac{\pl}{\pl t}\bt = \Dt_{L_t}\bt\) along the flow.  By the maximum principle, \(\sup_{L_t}\bt \leq \sup_{L_0}\bt\) and \(\inf_{L_t}\bt \geq \inf_{L_0}\bt\).  Since \(\bt\) is the tangent angle of \(\gm(\,\cdot\,,t)\), the curve is always graphical over the \(x\)-axis, and \(\sup_{-1<x<1}|u'(x,t)| \leq \sup_{-1<x<1}|u'_0(x)|\).  It is a standard change-of-variable calculation to derive \eqref{eqn:LMCF-graphical}.

It remains to show that the boundedness of \(u''(x,t)\) and \(u(a_i,t)\neq0\) for \(i=2,\ldots,n-1\) imply the boundedness of
\begin{align} \label{eqn:fform-graphical}
        \frac{1}{\phi^{\frac{1}2}}\frac{u''}{\big(1+(u')^2\bigr)^{\frac{3}2}} ~,\quad
        \frac{1}{\phi^{\frac{1}2}}\dd\log\phi\Bigl(\frac{-u'\pl_x + \pl_y}{\sqrt{1+(u')^2}}\Bigr) ~,\quad \frac{1}{\phi^{\frac{1}2}}\dd\log\phi(\pl_z) ~.
\end{align}
For the first term in \eqref{eqn:fform-graphical}, its boundedness is immediate.  By \eqref{fct:phi-regular-part}, \(\phi^{-\frac{3}{2}}\dd\phi(\pl_z) = \phi^{-\frac{3}{2}}\pl_z\phireg\) on \(\Om\cap(\BR^2\times\{0\})\setminus\{(a_1,0),\cdots,(a_n,0)\}\), and is bounded.  Since the curves do not reach \((a_i,0)\) for \(i=2,\ldots,n-1\) and \(u'(x,t)\) is uniformly bounded, \({\phi^{-\frac{3}{2}}}\dd\phi\bigl({-u'\pl_x + \pl_y}\bigr)\) is bounded for \(-1+\dt\leq x\leq 1-\dt\) and at every \(t\), where \(\dt = \frac{1}{10}\min\{a_2-a_1,a_n-a_{n-1}\}\).  It remains to examine \({\phi^{-\frac{3}{2}}}\dd\phi\bigl({-u'\pl_x + \pl_y}\bigr)\) near \(a_1 = -1\) and \(a_n = 1\).  For any \(x\in(-1,-1+\dt)\), Taylor's theorem\footnote{In the mean-value form of the remainder, with \(x\) as the reference point rather than \(-1\).} says that \((x+1)u'(x,t)-u(x,t) = \frac{1}{2}(x+1)^2u''(\hat{x},t)\) for some \(\hat{x}\in(-1,x)\).  It follows that
\begin{align*}
    &\quad \bigl((x+1)^2+(u(x,t))^2\bigr)^{\frac{3}{4}}\cdot\Bigl|\bigl(-u'\pl_x + \pl_y\bigr)\bigl( \frac{1}{\sqrt{(x+1)^2+y^2}} \bigr) \Bigr|_{(x,y)=(x,u(x,t))} \\
    &= \frac{\frac{1}2(x+1)^2\bigl|u''(\hat{x},t)\bigr|}{\bigl((x+1)^2+(u(x,t))^2\bigr)^{\frac{3}4}} \leq \frac{1}{2} \bigl((x+1)^2+(u(x,t))^2\bigr)^{\frac{1}{4}} \bigl|u''(\hat{x},t)\bigr| ~.
\end{align*}
Therefore, the second term in \eqref{eqn:fform-graphical} is bounded if \(u''(x,t)\) is bounded.
\end{proof}

\subsection{The Maximum Principle}

We will assume that \(u_0\) is concave, and prove that the concavity is preserved along the flow (cf.\ \cite{Lotay-Oliveira-2}*{Proposition~3.4}).  We recall the standard maximum principle (cf.\ \cite{Hamilton-75}*{Part~IV}).
\begin{lem} \label{lem:max-principle}
    Let $f(x,t)$ be a continuous function on $[-1,1]\times[0,T)$ that is differentiable in $t$ on $(-1,1)\times(0,T)$.  Suppose that $f(-1,t)=f(1,t) \geq 0$ for all $t\in[0,T)$, and \(f(x,0)\geq0\) for all \(x\in[-1,1]\).  If there exists a constant \(c\in\BR\) such that
    \[ \frac{\pl f}{\pl t}(x_0,t_0) \geq c\,f(x_0,t_0) \]
    whenever $f$ attains a negative spatial minimum\footnote{More precisely, \(f(x_0,t_0)< 0\), and \(f(x_0,t_0)\leq f(x,t_0)\) for all \(x\in[-1,1]\).  Such an \(x_0\) will not be \(\pm1\).} at $(x_0,t_0)$, then $f\geq0$ on $[-1,1]\times[0,T)$.
\end{lem}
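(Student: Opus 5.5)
We prove the statement by contradiction, using the standard device of perturbing by a small multiple of \(e^{-2|c|t}\)-type weight, or more directly, by first reducing to the case \(c<0\)... Concretely, set \(g(x,t) := e^{-\lambda t}f(x,t)\) with \(\lambda > \max\{c,0\}\). Then \(g\) satisfies the same boundary and initial conditions (\(g(\pm1,t)\geq0\), \(g(\,\cdot\,,0)\geq0\)), has the same sign as \(f\), and attains a negative spatial minimum at \((x_0,t_0)\) exactly when \(f\) does, since multiplying by the positive constant \(e^{-\lambda t_0}\) preserves spatial minimizers. At such a point,
\[ \frac{\pl g}{\pl t}(x_0,t_0) = e^{-\lambda t_0}\Bigl(\frac{\pl f}{\pl t}-\lambda f\Bigr)(x_0,t_0) \geq (c-\lambda)\,g(x_0,t_0) > 0 ~, \]
because \(g(x_0,t_0)<0\) and \(c-\lambda<0\). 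So it suffices to show \(g\geq0\), knowing that \(\pl_t g>0\) at every negative spatial minimum.

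Suppose \(g\) is negative somewhere, say \(g(x_1,t_1)<0\) with \(t_1<T\). Define \(m(t):=\min_{x\in[-1,1]}g(x,t)\), which is continuous on \([0,t_1]\) since \(g\) is continuous on the compact set \([-1,1]\times[0,t_1]\). We have \(m(0)\geq0\) and \(m(t_1)<0\). Choose \(\epsilon\) with \(m(t_1)<-\epsilon<0\) and let \(t_0 := \inf\{t\in[0,t_1] : m(t)\leq -\epsilon\}\). By continuity, \(m(t_0)=-\epsilon\), \(t_0>0\), and \(m(t)>-\epsilon\) for \(t<t_0\). Pick \(x_0\) with \(g(x_0,t_0)=m(t_0)=-\epsilon<0\); by the boundary condition, \(x_0\neq\pm1\), so \(x_0\in(-1,1)\), \(t_0\in(0,T)\), and \(g\) is differentiable in \(t\) there. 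For \(t<t_0\), \(g(x_0,t)\geq m(t)>-\epsilon=g(x_0,t_0)\), so the backward difference quotient \(\bigl(g(x_0,t_0)-g(x_0,t)\bigr)/(t_0-t)\) is negative, giving \(\pl_t g(x_0,t_0)\leq0\). This contradicts \(\pl_t g(x_0,t_0)>0\), since \((x_0,t_0)\) is a negative spatial minimum. Hence \(g\geq0\), and therefore \(f\geq0\) on \([-1,1]\times[0,T)\).

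The only delicate point is the choice of the first time \(t_0\), which needs the continuity of the spatial minimum \(m\) and the requirement that the minimizer be interior so that time-differentiability is available. Both follow from the stated continuity of \(f\) and the sign of \(f\) at \(x=\pm1\). The exponential weight removes the need for any sign assumption on \(c\).
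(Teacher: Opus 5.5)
Your proof is correct and follows the same route as the paper. In both, an exponential weight \(e^{-\lambda t}\) with \(\lambda>c\) (the paper takes \(\lambda=c+1\)) makes the time derivative strictly positive at negative spatial minima, and a backward difference quotient at a negative spatial minimum \((x_0,t_0)\) with \(g(x_0,t)\ge g(x_0,t_0)\) for \(t<t_0\) then forces \(\pl_t g(x_0,t_0)\le 0\), a contradiction; the only difference is that you locate this point as the first time the spatial minimum reaches \(-\epsilon\), whereas the paper takes a space-time minimum over a compact time slab.
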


\begin{proof}
We first deal with a special case: suppose further that we have $\frac{\pl f}{\pl t}(x_0,t_0) > 0$ at every negative spatial minimum $(x_0,t_0)$ of $f$.

Suppose that \(f\) becomes negative somewhere.  Then, there exists a \(\dt>0\) such that the minimum of \(f\) on \([-1,1]\times[0,(1-\dt)T]\) is negative.  Let the minimum be achieved at \((x_0,t_0)\).  By assumption, one must have \((x_0,t_0)\in(-1,1)\times(0,T)\).  Since it attains a negative spatial minimum, \(\frac{\pl f}{\pl t}(x_0,t_0) > 0\).  On the other hand, \(f(x_0,t_0)\leq f(x_0,t)\) for all \(t\leq t_0\).  This is a contradiction.

Now, we deal with the general case.  Let \(\td{f} = e^{-(c+1)t}f\), and
\[ \frac{\pl\td{f}}{\pl t} = \bigl[-(c+1)f + \frac{\pl f}{\pl t}\bigr]e^{-(c+1)t} ~. \]
It is not hard to see that $\td{f}$ satisfies the assumptions of the special case.  This finishes the proof.
\end{proof}

To study the concavity of the solution to \eqref{eqn:LMCF-graphical}, consider
\begin{align} \label{fct:u-double-prime}
    P(x,t) &:= -\frac{\pl u}{\pl t} = -\frac{1}{\phi}\frac{u''}{1+(u')^2} ~.
\end{align}
According to Lemma~\ref{lem:LMCF-curve-graphical}, \(P(x,t)\) is continuous, and vanishes on \(\{\pm1\}\times[0,T)\).
By using \eqref{eqn:LMCF-graphical},
\begin{align}
    \frac{\pl P}{\pl t} 
    = \Bigl(\frac{\pl}{\pl y}\frac{1}{\phi}\Bigr)\cdot(-P)\cdot(\phi P) + \frac{1}{\phi}\frac{P''}{1+(u')^2} + \frac{2u'\cdot P\cdot P'}{1+(u')^2} ~. \label{eqn:u-tt}
\end{align}

\begin{lem} \label{lem:concavity}
    Under Assumption~\ref{asmp:LMCF}, if \(\gm(\,\cdot\,,0)\) is given by the graph of a \emph{concave} function \(u_0:[-1,1]\to\BR\) with \(\sup_{-1<x<1}|u'_0(x)|<\infty\),  then \(u(x,t)\) remains concave for all \(t\in[0,T)\), where \(T\) is the maximal existence time.  Moreover, \(u(x,t) > 0\) on \((-1,1)\times[0,T)\).
\end{lem}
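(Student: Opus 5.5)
The plan is to apply the maximum principle, Lemma~\ref{lem:max-principle}, to \(f = P = -\frac{\pl u}{\pl t}\) on \([-1,1]\times[0,T']\) for each \(T'<T\).

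\emph{Initial and boundary data.} At \(t=0\), \(P(x,0) = -\frac{1}{\phi}\frac{u_0''}{1+(u_0')^2}\geq 0\), since \(u_0\) is concave and \(\phi>0\). On the boundary, \(P(\pm1,t)=0\) because \(u(\pm1,t)=0\) for all \(t\); this is the continuity and vanishing recorded after \eqref{fct:u-double-prime}.

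\emph{Differential inequality at a negative minimum.} Suppose \(P\) attains a negative spatial minimum at \((x_0,t_0)\). Then \(x_0\in(-1,1)\), \(P'(x_0,t_0)=0\) and \(P''(x_0,t_0)\geq 0\). In \eqref{eqn:u-tt} the gradient term \(\frac{2u'PP'}{1+(u')^2}\) vanishes, and the diffusion term \(\frac{1}{\phi}\frac{P''}{1+(u')^2}\) is nonnegative. The remaining term is
\[ -\Bigl(\frac{\pl}{\pl y}\frac1\phi\Bigr)\phi P^2 = \frac{\pl_y\phi}{\phi}\,P^2 = \Bigl(\frac{\pl_y\phi}{\phi}P\Bigr)\,P. \]
We need \(\frac{\pl P}{\pl t}\geq cP\) for a fixed constant \(c\). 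Since \(P<0\), it suffices to bound \(\frac{\pl_y\phi}{\phi}P\) from below by \(c\).

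On the compact interval \([0,T']\), the flow is smooth and stays away from \((a_i,0)\) for \(2\le i\le n-1\). Near the endpoints, the proof of Lemma~\ref{lem:LMCF-curve-graphical} shows that \(\phi^{-1}\dd\phi\) applied to directions along the curve is controlled by \(\phi^{1/2}\) times a bounded quantity. Likewise, \(|P|\leq \phi^{-1}|u''|\), and \(u''\) is bounded by Lemma~\ref{lem:boundedness-kappa}, uniformly on \([0,T']\). Hence \(\bigl|\frac{\pl_y\phi}{\phi}P\bigr|\lesssim \phi^{1/2}\cdot\phi^{-1}=\phi^{-1/2}\), which is bounded. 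Thus the product is bounded on \([-1,1]\times[0,T']\), which gives a constant \(c\). Lemma~\ref{lem:max-principle} then yields \(P\geq 0\), i.e.\ \(u''\leq 0\), on every \([0,T']\), and hence for all \(t<T\).

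\emph{Main obstacle.} The hard step is justifying this boundedness of \(\frac{\pl_y\phi}{\phi}P\) near \(x=\pm1\), where \(\phi\) blows up. There I would use the Taylor estimate from the proof of Lemma~\ref{lem:LMCF-curve-graphical}: \(|\pl_y r^{-1}|\) at \((x,u)\) is of order \(|u|/r^3\), and \(|u|\lesssim (x+1)\) by the bound on \(u'\). Hence \(\phi^{-1}|\pl_y\phi|\lesssim r^{-1}\), and multiplying by \(|P|\lesssim r|u''|\) gives a bounded quantity. If the minimum-point argument is delicate, one can alternatively note that \(\frac{\pl_y\phi}{\phi}P\) is continuous up to the boundary and simply take \(c\) to be its infimum.

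\emph{Positivity of \(u\).} Since \(u(\cdot,t)\) is concave with \(u(\pm1,t)=0\), we get \(u\geq0\), and either \(u>0\) on \((-1,1)\) or \(u\equiv 0\). Now \(u_0\not\equiv0\): if \(u_0\equiv 0\) the initial curve would pass through \((a_i,0)\), which is excluded. Also \(\frac{\pl u}{\pl t}=-P\geq 0\), so \(u(x,t)\geq u_0(x)\). Strict positivity of \(u_0\) on \((-1,1)\) follows from concavity and \(u_0\not\equiv0\), after possibly reflecting \(y\mapsto -y\) so that \(u_0\ge 0\). Therefore \(u(x,t)\geq u_0(x)>0\) on \((-1,1)\times[0,T)\).
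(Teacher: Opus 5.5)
\textbf{The positivity step has a sign error that breaks it.} By definition $P=-\pl_t u$. Once you have shown $P\ge0$, you get $\pl_t u=-P\le 0$, not $\ge0$. The flow pushes the concave graph \emph{down} toward the segment; indeed Theorem~\ref{thm:main} shows $u\to0$. Hence $u(x,t)\le u_0(x)$, so the inequality $u(x,t)\ge u_0(x)$ you rely on is false, and positivity of $u_0$ gives no lower bound at later times.

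The fix is to argue at each fixed time, as the paper does. For $t<T$, $u(\cdot,t)$ is concave with $u(\pm1,t)=0$, so either $u(\cdot,t)>0$ on $(-1,1)$ or $u(\cdot,t)\equiv0$. The latter is excluded because $L_t$ is a smooth sphere through $p_1,p_n$, so $\gm(\cdot,t)$ avoids $(a_2,0)$, i.e.\ $u(a_2,t)\neq0$. This is exactly your argument for $u_0\not\equiv0$, applied at time $t$ instead of time $0$. The remark about reflecting $y\mapsto-y$ is unnecessary and harmful: a concave function vanishing at $\pm1$ is automatically nonnegative, and reflecting would make it convex.

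The concavity part is essentially the paper's argument. The reaction term in \eqref{eqn:u-tt} is $\frac{\pl_y\phi}{\phi}P^2=-(\pl_y\phi^{-1})(\phi P)\,P$. The paper bounds it using two facts:
\begin{itemize}
\item $\pl_y\phi^{-1}$ is bounded off the singular points, since $|y|\le\sqrt{(x-a_i)^2+y^2}$;
\item $\phi P=-\sqrt{1+(u')^2}\,\kp$ is bounded by Lemma~\ref{lem:boundedness-kappa}.
\end{itemize}
Your ``main obstacle'' estimate ($\phi^{-1}|\pl_y\phi|\lesssim r^{-1}$, $|P|\lesssim r|u''|$) is the same bound localized near $\pm1$.

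There are two slips to correct in that part.
\begin{itemize}
\item Since $P<0$ at the minimum, you need an \emph{upper} bound $\frac{\pl_y\phi}{\phi}P\le c$, so $c$ should be a supremum, not an infimum. Your two-sided bound does supply this.
\item Your first justification, $\phi^{-1}|\pl_y\phi|\lesssim\phi^{1/2}$ via the proof of Lemma~\ref{lem:LMCF-curve-graphical}, is wrong. That proof controls only the normal derivative $\dd\phi(-u'\pl_x+\pl_y)$, which benefits from the cancellation $(x+1)u'-u=O((x+1)^2)$. Near $(-1,0)$ one has $|\pl_y\phi|\sim u/r^3\sim r^{-2}$ when $u'(-1)\neq0$, so $\phi^{-1}|\pl_y\phi|\sim r^{-1}\gg\phi^{1/2}$.
\end{itemize}
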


\begin{rmk} \label{rmk:end-point-derivative}
    Under the concavity assumption, \(u'_0(\pm1)\) exists; see Lemma~\ref{lem:boundedness-kappa} and Remark~\ref{rmk:end-point-tangent}.  Also, \(u_0'(-1)=\sup_{-1<x<1}u'_0(x)\), and \(u_0'(1)=\inf_{-1<x<1}u'_0(x)\).
\end{rmk}

\begin{proof}
The concavity of \(u_0\) implies that \(P(x,0)\geq0\).
With the help of \eqref{fct:phi-regular-part}, we compute
\begin{align} \label{eqn:derivative-phi-inverse} \begin{split}
    \left. \frac{\pl}{\pl y}\frac{1}{\phi} \right|_{(x,y,0)} &= \Bigl(\frac{1}{2}\sum_{i=1}^n\frac{1}{\sqrt{(x-a_i)^2+y^2}} + \phireg(x,y,0)\Bigr)^{-2} \times \\
    &\qquad \Bigl( \frac{1}{2}\sum_{j=1}^n\frac{y}{\bigl((x-a_j)^2+y^2\bigr)^{\frac{3}2}} - \pl_y\phireg(x,y,0) \Bigr) ~.
\end{split} \end{align}
Since \(|y|\leq\sqrt{(x-a_i)^2+y^2}\), \(\pl_y\phi^{-1}|_{(x,y,0)}\) is bounded on \(\Om\cap(\BR^2\times\{0\})\setminus\{(a_1,0),\cdots,(a_n,0)\}\).  Note that
\begin{align*}
    \phi P &= -\frac{u''}{1+(u')^2} = -\sqrt{1+(u')^2}\cdot\kp ~.
\end{align*}
By Lemma~\ref{lem:boundedness-kappa} and Lemma~\ref{lem:LMCF-curve-graphical}, \(\phi P\) is bounded on\footnote{If \(T=\infty\), \(\phi\cdot P\) is bounded on any finite-time region.} \((-1,1)\times[0,T-\dt]\), for any sufficiently small \(\dt>0\).  It follows that Lemma~\ref{lem:max-principle} is applicable to \(P\) on \([-1,1]\times[0,T-\dt)\), and thus \(P\geq0\) on \([-1,1]\times[0,T-\dt)\).  One infers that the concavity is preserved along the flow.

If \(u(x,t) = 0\) for some \(x\in(-1,1)\) (at some \(t\)), concavity implies that \(u(x,t) = 0\) for all \(x\), which contradicts  \(u(a_2,t)\neq0\).  It follows that \(u(x,t) > 0\) on \((-1,1)\times[0,T)\).
\end{proof}

In the proof of the main theorem, we will use \(u(x,t)\) to bound \(u''(x,t)\).
\begin{cor} \label{cor:useful-equation}
    In the situation of Lemma~\ref{lem:concavity}, consider the function
    \begin{align} \label{fct:main-test}
        Q(x,t) := u(x,t) - g(t)\cdot P(x,t) = u + g(t)\frac{1}{\phi}\frac{u''}{1+(u')^2} ~,
    \end{align}
    where \(g(t)>0\) is a smooth function on \([0,\infty)\) to be chosen later.  It obeys
    \begin{align} \label{eqn:main-test}
        \frac{\pl Q}{\pl t} - \frac{1}{\phi}\frac{Q''}{1+(u')^2} &\geq
        \Bigl[ -\bigl(\pl_t g + c_0\frac{u}{\phi}+\frac{2(u')^2}{1+(u')^2}\bigr) + c_0\frac{Q}{\phi} + \frac{2u'Q'}{1+(u')^2} \Bigr]\cdot P
    \end{align}
    for some constant \(c_0 \geq 0\).
\end{cor}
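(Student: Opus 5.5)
The inequality should follow from a direct computation: substitute the evolution equation \eqref{eqn:u-tt} for \(P\), use the definition of \(Q\) to trade \(gP\) and \(gP'\) for \(u-Q\) and \(u'-Q'\), and control the single term involving \(\pl_y\phi\) by the sign \(u\geq0\) from Lemma~\ref{lem:concavity}. The candidate constant is \(c_0:=\sup_\Om|\pl_y\phireg|\), which is finite since we shrank \(\Om\) so that the derivatives of \(\phireg\) are bounded.

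First, the left-hand side. By \eqref{eqn:LMCF-graphical}, \(\pl_t u=-P\), so \(\pl_t Q=-P-(\pl_t g)P-g\,\pl_tP\). From \eqref{fct:u-double-prime} we have \(u''=-\phi(1+(u')^2)P\), hence
\[ \frac{1}{\phi}\frac{Q''}{1+(u')^2}=-P-\frac{g}{\phi}\frac{P''}{1+(u')^2} ~. \]
Subtracting, the two \(-P\) terms cancel. Inserting \eqref{eqn:u-tt} for \(\pl_tP\), the \(P''\) terms cancel as well, leaving
\[ \frac{\pl Q}{\pl t}-\frac{1}{\phi}\frac{Q''}{1+(u')^2} = -(\pl_t g)P + g\,\phi\,\Bigl(\frac{\pl}{\pl y}\frac1\phi\Bigr)P^2-\frac{2g\,u'PP'}{1+(u')^2} ~. \]

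Next, rewrite the right-hand side using \(gP=u-Q\) and \(gP'=u'-Q'\). The last term becomes
\[ \Bigl[-\frac{2(u')^2}{1+(u')^2}+\frac{2u'Q'}{1+(u')^2}\Bigr]P ~, \]
which accounts for those two pieces of \eqref{eqn:main-test}. The middle term equals \(\phi\,(\pl_y\phi^{-1})\,(u-Q)\,P\). The target contribution \(-c_0(u-Q)P/\phi\) is what \eqref{eqn:main-test} requires. Since \((u-Q)P=gP^2\geq0\), with no sign assumption on \(P\) needed, it suffices to prove the pointwise bound \(\phi\,\pl_y\phi^{-1}\geq -c_0/\phi\) along the curve. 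Because \(\pl_y\phi^{-1}=-\phi^{-2}\pl_y\phi\), this is equivalent to \(\pl_y\phi\leq c_0\) at points \((x,u(x,t),0)\).

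The only non-algebraic step, and the main point to check, is this bound on \(\pl_y\phi\). From \eqref{fct:phi-regular-part},
\[ \pl_y\phi=-\frac12\sum_{j=1}^n\frac{y}{\bigl((x-a_j)^2+y^2\bigr)^{\frac32}}+\pl_y\phireg ~. \]
On the curve, \(y=u(x,t)\geq0\) by Lemma~\ref{lem:concavity}, so the singular sum is nonpositive, and \(\pl_y\phi\leq\sup|\pl_y\phireg|=c_0\). At the endpoints \(x=\pm1\), where \(y=0\) and \(\phi\) is singular, one argues in the interior and, if needed, by continuity. Combining the three pieces yields \eqref{eqn:main-test} with this \(c_0\geq0\).
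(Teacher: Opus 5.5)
Your proposal is correct and follows essentially the same route as the paper. Substitute \eqref{eqn:u-tt}, bound $\bigl(\pl_y\phi^{-1}\bigr)(-\phi P^2)=\frac{P^2}{\phi}\,\pl_y\phi\le c_0\frac{P^2}{\phi}$ using $u\ge0$ and $c_0=\sup|\pl_y\phireg|$, and then rewrite $gP=u-Q$, $gP'=u'-Q'$. Your observation that only $g>0$ and $P^2\ge0$ are needed, and not the sign of $P$, is also accurate.
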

\begin{proof}
In this setting, \(u\geq0\) and \(P\geq 0\).  For the first term on the right-hand side of \eqref{eqn:u-tt},
\begin{align*}
    \Bigl(\frac{\pl}{\pl y}\frac{1}{\phi}\Bigr)\cdot(-\phi P^2)
    &= \frac{P^2}{\phi} \Bigl( -\frac{1}{2}\sum_{i=1}^n\frac{u}{\bigl((x-a_i)^2+u^2\bigr)^{\frac{3}2}} + \pl_y\phireg(x,y,0)\bigl|_{y=u} \Bigr) \leq c_0\frac{P^2}{\phi} ~,
\end{align*}
where \(c_0 = \max_{\Om\cap(\BR^2\times\{0\})}|\pl_y\phireg(x,y,0)|\).  By \eqref{eqn:LMCF-graphical} and \eqref{eqn:u-tt},
\begin{align*}
    \frac{\pl Q}{\pl t} - \frac{1}{\phi}\frac{Q''}{1+(u')^2} &= -(\pl_tg)\cdot P - g\cdot \Bigl[ \Bigl(\frac{\pl}{\pl y}\frac{1}{\phi}\Bigr)\cdot(-\phi P^2) + \frac{2u'\cdot P\cdot P'}{1+(u')^2} \Bigr] \\
    &\geq \Bigl[-(\pl_tg) - g\cdot c_0\frac{P}{\phi} - g\cdot\frac{2u'\cdot P'}{1+(u')^2}\Bigr]\cdot P ~.
\end{align*}
Writing \(P\) as \(\frac{u-Q}{g}\) finishes the proof of this corollary.
\end{proof}

The maximum principle leads to the following avoidance principle.
\begin{lem} \label{lem:avoidance}
    Suppose that \(v_+(x,t),v_-(x,t)\in C^0([-1,1]\times[0,T))\cap C^\infty((-1,1)\times(0,T))\) satisfy the following.
    \begin{itemize}
        \item \(\frac{\pl}{\pl t}v_+ \geq \frac{1}{\phi}\frac{v''_+}{1+(v'_+)^2}\) and \(\frac{\pl}{\pl t} v_- \leq \frac{1}{\phi}\frac{v''_-}{1+(v_-')^2}\).
        \item The \(C^2\)-norms of \(v_+,v_-\) are bounded on \((-1,1)\times(0,T)\).
        \item \(v_\pm(\pm1,t) = 0\) for \(t\in[0,T)\), and \(v_\pm(a_i,t)\neq0\) for \(i=2,\ldots,n-1\).
        \item \(v_-(x,0) \leq v_+(x,0)\) for \(x\in[-1,1]\).
    \end{itemize}
    Then, \(v_- \leq v_+\) on \([-1,1]\times[0,T)\).
\end{lem}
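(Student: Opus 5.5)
The plan is to apply Lemma~\ref{lem:max-principle} to \(f := v_+ - v_-\), after multiplying by an exponential weight. First, \(f\) satisfies the boundary hypotheses: \(f(\pm1,t) = 0\) because \(v_\pm(\pm1,t)=0\), and \(f(x,0)\geq0\) by the last assumption. It is continuous on \([-1,1]\times[0,T)\) and smooth in the interior, so the regularity requirements of Lemma~\ref{lem:max-principle} hold. It remains to produce a constant \(c\) with \(\pl_t f \geq c f\) at every negative spatial minimum \((x_0,t_0)\).

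At such a point \(x_0\in(-1,1)\), so \(f'(x_0,t_0)=0\) and \(f''(x_0,t_0)\geq 0\). The first condition gives \(v_+'=v_-'=:p\) there, and hence
\[ \frac{\pl f}{\pl t} \geq \frac{1}{\phi(x_0,v_+)}\frac{v_+''}{1+p^2} - \frac{1}{\phi(x_0,v_-)}\frac{v_-''}{1+p^2} ~. \]
I will split the right-hand side as
\[ \frac{1}{\phi(x_0,v_+)}\frac{f''}{1+p^2} + \Bigl(\frac{1}{\phi(x_0,v_+)}-\frac{1}{\phi(x_0,v_-)}\Bigr)\frac{v_-''}{1+p^2} ~. \]
Here \(\phi\) means \(\phi(x,y,0)\) evaluated at the graph. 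The first term is nonnegative because \(\phi>0\) and \(f''\geq0\). For the second term, the mean value theorem in \(y\) gives
\[ \frac{1}{\phi(x_0,v_+)}-\frac{1}{\phi(x_0,v_-)} = \Bigl(\frac{\pl}{\pl y}\frac1\phi\Bigr)(x_0,\eta)\,f(x_0,t_0) ~, \]
for some \(\eta\) between \(v_-\) and \(v_+\). By the computation \eqref{eqn:derivative-phi-inverse} in the proof of Lemma~\ref{lem:concavity}, \(\pl_y\phi^{-1}\) is bounded on the punctured plane region, say by \(C_1\). The \(C^2\) bound gives \(|v_-''|\leq C_2\). So the second term is at least \(-C_1C_2|f| = C_1C_2 f\), using \(f<0\). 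Taking \(c = C_1C_2\) yields \(\pl_t f \geq c f\), and Lemma~\ref{lem:max-principle} then gives \(f\geq0\).

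The main obstacle is the mean value step. The segment \(\{x_0\}\times[\min v_\pm,\max v_\pm]\) may pass through a puncture \((a_i,0)\), or through the endpoints' neighborhoods, where \(1/\phi\) is merely continuous, extending by \(0\) at the punctures. My first approach is to observe that \(1/\phi\) extends continuously to all of \(\Om\cap(\BR^2\times\{0\})\), with value \(0\) at the punctures, and that \(\pl_y\phi^{-1}\) is bounded off the punctures. The restriction to the vertical line \(x=x_0\) is then Lipschitz with uniform constant, and this suffices: if \(x_0=a_i\), the line meets the puncture at a single point, and a function that is continuous and piecewise \(C^1\) with bounded derivative is Lipschitz. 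Hence the difference quotient bound holds without any need for \(v_\pm(a_i,t)\neq0\). That condition, together with the \(C^2\) bounds, is only needed to ensure that the \(v_\pm\) are genuine sub- and supersolutions with bounded coefficients.
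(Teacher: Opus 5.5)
Your proposal is correct and follows the paper's proof. As in the paper, you apply Lemma~\ref{lem:max-principle} to $v_+-v_-$ and split the operator difference at a negative spatial minimum: $v_+'=v_-'$ removes the gradient term, $f''\geq 0$ handles the principal term, and the $\phi^{-1}$-difference is controlled by the boundedness of $\pl_y\phi^{-1}$ together with the $C^2$ bound on $v_-$. Your point that $y\mapsto\phi^{-1}(x_0,y,0)$ stays Lipschitz even when the vertical segment passes through a puncture $(a_i,0)$ is a useful clarification that the paper leaves implicit.
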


\begin{proof}
We would like to apply Lemma~\ref{lem:max-principle} to \(v_+-v_-\) on \([-1,1]\times[0,T)\).
We compute
\begin{align*}
    \frac{\pl(v_+-v_-)}{\pl t} &\geq \frac{1}{\phi(x,v_+)}\frac{v''_+}{1+(v'_+)^2} - \frac{1}{\phi(x,v_-)}\frac{v''_-}{1+(v_-')^2} \\
    &= \frac{1}{\phi(x,v_+)}\frac{(v_+-v_-)''}{1+(v_+')^2} + \frac{v_-''}{1+(v_+')^2} \Bigl( \frac{1}{\phi(x,v_+)}-\frac{1}{\phi(x,v_-)} \Bigr) \\
    &\qquad\qquad + \frac{v_-''}{\phi(x,v_-)} \Bigl( \frac{1}{1+(v_+')^2}-\frac{1}{1+(v_-')^2} \Bigr) ~.
\end{align*}
By the assumptions of this lemma and the discussion after \eqref{eqn:derivative-phi-inverse}, the above expression at a negative spatial minimum of \(v_+-v_-\) is no less than \(c\cdot(v_+-v_-)\) for some \(c\in\BR\).  This completes the proof of the lemma.
\end{proof}

\section{The Barrier Function} \label{sec:barrier}

Let \(n\) be an integer greater than \(2\).  For \(-1 = a_1 < a_2 < \cdots < a_n = 1\) and \(\mass \geq 0\), let
\begin{align} \label{fct:phi-plane}
    \phixaxis(x,y) &= \mass + \frac{1}{2} \sum_{i=1}^n\frac{1}{\sqrt{(x-a_i)^2+y^2}} ~.
\end{align}
For most of this paper, the constant \(\mass\) plays no role, and may be set equal to zero.  {It is needed only in Corollary~\ref{cor:sharp-rate}, in particular for the ALF case.}  It contributes to lower-order terms in the construction of barrier functions.
In this section, we study the equation
\begin{align} \label{eqn:barrier}
    -\frac{\dd^2}{\dd x^2} w_\ld(x) &= \ld\cdot\phixaxis(x,w_\ld(x))\cdot w_\ld(x)
    \quad\text{for }-1<x<1 ~,
\end{align}
with \(w_\ld(\pm1) = 0\), for any \(\ld>0\).  A direct first-variation computation shows that solutions of \eqref{eqn:barrier} are critical points of the functional
\begin{align} \label{function:energy}
    \CE_\ld(f) &= \frac{1}{2}\int_{-1}^1 \Bigl|\frac{\dd f(x)}{\dd x}\Bigr|^2\dd x - \frac{\ld}{2}\int_{-1}^1\Bigl(\sum_{i=1}^n\sqrt{(x-a_i)^2+(f(x))^2} + \mass\cdot(f(x))^2\Bigr)\dd x
\end{align}
for \(f\in H^1_0((-1,1))\).

We summarize the properties of the solutions in the following theorem, the proof of which occupies this section.
\begin{thm} \label{thm:barrier-summary}
    For \(\ld\in(0,\frac{1}{\mass+1})\), there exists a \(w_\ld(x)\in C^\infty((-1,1))\cap C^1([-1,1])\) which satisfies \(\eqref{eqn:barrier}\) and has the following properties: \(w_\ld(x)\) is concave, \(w_\ld(\pm1)=0\), and \(0<w_\ld(x)\leq\frac{n\ld}{2}(1-x^2)\) for \(-1<x<1\).

    This one-parameter family of functions is differentiable in \(\ld\).  The derivative with respect to \(\ld\), \(\pl_\ld w_\ld(x)\), belongs to \(C^\infty((-1,1))\cap C^1([-1,1])\) and has the following properties: \(\pl_\ld w_\ld(x)\) is concave, \(\pl_\ld w_\ld(\pm1) = 0\), \(\pl_\ld w_\ld(x) \geq \frac{w_\ld(x)}{\ld} > 0\) for \(-1<x<1\).  It obeys the following equation
    \begin{align}
        -\frac{\dd^2}{\dd x^2}\pl_\ld w_\ld(x) &= \ld\cdot\msL_{\ld,0}(x)\cdot\pl_\ld w_\ld(x) + \phixaxis(x,w_\ld(x))\cdot w_\ld(x) ~,
    \end{align}
    where
    \begin{align}
        \msL_{\ld,0}(x) &= \mass + \frac{1}{2}\sum_{i=1}^n\frac{(x-a_i)^2}{\bigl((x-a_i)^2 + (w_\ld(x))^2\bigr)^{\frac{3}2}} < \phixaxis(x,w_\ld(x)) ~.
    \end{align}

    Moreover, there exists a constant \(c > 0\) such that
    \begin{align}
        \|w_\ld\|_{C^1([-1,1])} \leq c\,\|w_\ld\|_{C^0([-1,1])} \quad\text{and}\quad
        \|\pl_\ld w_\ld\|_{C^1([-1,1])} \leq c\,\|\pl_\ld w_\ld\|_{C^0([-1,1])} ~.
    \end{align}
\end{thm}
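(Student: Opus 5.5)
We will construct \(w_\ld\) variationally and then read off the remaining properties from the equation and the maximum principle. Write \(F(x,y) := \sum_{i=1}^n\sqrt{(x-a_i)^2+y^2} + \mass y^2\). Then \(\frac12\pl_y F(x,y) = \phixaxis(x,y)\,y\), and \(F\) is convex and even in \(y\).

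\textbf{Existence.} For \(\ld<\frac{1}{\mass+1}\) we minimize \(\CE_\ld\) over \(H^1_0((-1,1))\). Coercivity comes from \(\sqrt{(x-a_i)^2+f^2}\leq |x-a_i|+|f|\), so the potential term grows at most like \(\frac{\ld}{2}\bigl(n\|f\|_{L^1}+\mass\|f\|_{L^2}^2\bigr)\). Combined with the Poincar\'e inequality \(\|f'\|_{L^2}^2\geq\frac{\pi^2}{4}\|f\|_{L^2}^2\) and \(\ld\mass<1<\frac{\pi^2}{4}\), this makes \(\CE_\ld\) coercive and weakly lower semicontinuous, so a minimizer exists. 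Replacing \(f\) by \(|f|\) leaves \(\CE_\ld\) unchanged, so we may take \(w_\ld\geq0\).

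The minimizer is nontrivial. For \(f=\ep\psi\) the potential gain is first order: since \(\sqrt{(x-a_i)^2+\ep^2\psi^2}-|x-a_i|\) is of order \(\ep^2\psi^2/|x-a_i|\), integrating across the points \(a_i\) gives \(\gtrsim \ep^2\log(1/\ep)\). This beats the Dirichlet energy \(O(\ep^2)\), so \(\CE_\ld(\ep\psi)<0=\CE_\ld(0)\) for small \(\ep\).

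The Euler--Lagrange equation is exactly \eqref{eqn:barrier}. Its right-hand side \(\ld\phixaxis w\) is nonnegative and bounded wherever \(w>0\), since \(\phixaxis\,w\leq \mass w+\frac n2\). Hence \(w_\ld\) is concave and \(C^1\) up to the boundary. Concavity together with nontriviality gives \(w_\ld>0\) on \((-1,1)\). Smoothness in the interior then follows by bootstrapping, since \(\phixaxis\) is smooth away from \(y=0\).

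\textbf{Bounds.} From \(-w''=\ld(\mass w+\frac12\sum_i w/\sqrt{\cdot})\leq \ld\mass w+\frac{n\ld}{2}\), one compares with \(\frac{n\ld}{2}(1-x^2)\), which satisfies \(-v''=n\ld\). The maximum principle requires absorbing the \(\mass w\) term. For this I would use \(\ld<\frac{1}{\mass+1}\) together with an a priori \(\|w\|_{C^0}\leq1\) obtained from concavity and energy considerations. This step is delicate if \(\mass>0\), and I expect to adjust the constant there.

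The \(C^1\) bound \(\|w\|_{C^1}\leq c\|w\|_{C^0}\) comes from concavity. Indeed, \(|w'(\pm1)|\) is controlled by the mean value theorem near the endpoints, provided the maximum of \(w\) stays at a definite distance from \(\pm1\). That in turn holds because \(-w''\) is comparable to \(\ld\cdot\)(bounded) times \(w/\mathrm{dist}\), which is integrable.

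\textbf{Differentiability in \(\ld\).} This is the main obstacle, because the linearized operator \(-\frac{\dd^2}{\dd x^2}-\ld\msL_{\ld,0}\), where \(\msL_{\ld,0}=\pl_y(\phixaxis y)|_{y=w_\ld}\), must be shown invertible on \(H^1_0\). The key observation is that \(\msL_{\ld,0}<\phixaxis(x,w_\ld)\) strictly, since \(\pl_y(\phixaxis\, y)=\phixaxis-\frac12\sum y^2/(\cdot)^{3/2}\). Now \(w_\ld>0\) is a positive eigenfunction with eigenvalue \(0\) for the operator \(-\frac{\dd^2}{\dd x^2}-\ld\phixaxis(x,w_\ld)\), so that operator has first eigenvalue \(0\). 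Monotonicity of eigenvalues in the potential then gives that the linearized operator has first eigenvalue \(>0\), hence is invertible.

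With invertibility in hand, the implicit function theorem applies. This also gives uniqueness of the positive solution, via the same convexity argument (compare two solutions using Picone's identity). Differentiating \eqref{eqn:barrier} in \(\ld\) yields the stated equation for \(\pl_\ld w_\ld\). Its right-hand side is positive, and positivity of the first eigenvalue yields \(\pl_\ld w_\ld>0\).

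To obtain \(\pl_\ld w_\ld\geq w_\ld/\ld\), set \(z=\ld\pl_\ld w_\ld-w_\ld\). A direct computation gives \(-z''-\ld\msL_{\ld,0}z=\ld(\phixaxis-\msL_{\ld,0})w_\ld\geq0\), and the positive-first-eigenvalue maximum principle then gives \(z\geq0\). Concavity of \(\pl_\ld w_\ld\) follows because the right-hand side of its equation is nonnegative. The \(C^1\) estimate for \(\pl_\ld w_\ld\) follows exactly as for \(w_\ld\).
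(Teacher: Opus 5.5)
\textbf{Gap: the bound \(w_\ld\le\frac{n\ld}{2}(1-x^2)\) for \(\mass>0\).} Your proposal does not prove this estimate. You call it delicate and suggest adjusting the constant, but the theorem asserts exactly this constant for every \(\ld<\frac{1}{\mass+1}\). Also, a single comparison using an a priori bound \(\|w_\ld\|_{C^0}\le1\) only gives \(w_\ld\le\frac{\ld}{2}(\mass+\frac n2)(1-x^2)\), which is weaker when \(\mass>\frac n2\).

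No energy argument is needed; keep \(\|w_\ld\|_{C^0}\) as an unknown. Since \(0<\phixaxis(x,w_\ld)w_\ld\le\mass\|w_\ld\|_{C^0}+\frac n2\), the function \(w_\ld-\frac{\ld}{4}(n+2\mass\|w_\ld\|_{C^0})(1-x^2)\) is convex and vanishes at \(\pm1\), hence is nonpositive. Evaluating at a maximum point gives \((1-\frac{\ld\mass}{2})\|w_\ld\|_{C^0}\le\frac{n\ld}{4}\), so \(\|w_\ld\|_{C^0}\le\frac{n\ld}{2}\) once \(\ld\mass\le1\). Substituting back yields \(w_\ld\le\frac{n\ld}{4}(1+\mass\ld)(1-x^2)\le\frac{n\ld}{2}(1-x^2)\). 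This is the paper's Lemma~\ref{lem:barrier-C0-estimate}.

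\textbf{Smaller points.}
\begin{itemize}
\item For \(z=\ld\pl_\ld w_\ld-w_\ld\), the correct identity is \(-z''-\ld\msL_{\ld,0}z=\ld\msL_{\ld,0}w_\ld\), not \(\ld(\phixaxis-\msL_{\ld,0})w_\ld\). The sign is unaffected. In fact \(-z''=\ld^2\msL_{\ld,0}\pl_\ld w_\ld\ge0\), so \(z\) is concave with zero boundary values and no maximum principle is needed; this is how the paper argues.
\item In the \(C^1\) estimate, the position of the maximum is irrelevant; what closes the argument is absorption. Concavity gives \(w_\ld\le w_\ld'(-1^+)(x+1)\), so for small \(\dt\) you have \(-w_\ld''\le C\ld\,w_\ld'(-1^+)\) on \((-1,-1+\dt]\). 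The mean value theorem then gives \((1-C\ld\dt)\,w_\ld'(-1^+)\le w_\ld(-1+\dt)/\dt\).
\item For \(\pl_\ld w_\ld\) the \(C^1\) estimate is not ``exactly as'' for \(w_\ld\). The source term \(\phixaxis w_\ld\) must first be dominated by \(\ld\phixaxis\pl_\ld w_\ld\), using \(z\ge0\).
\end{itemize}

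\textbf{Comparison of routes.} Your approach to uniqueness and differentiability differs genuinely from the paper's and is viable.
\begin{itemize}
\item The paper never sets up a linearized operator. It proves the ratio/Wronskian estimate \(|\td w-w|\le c_4|\td\ld-\ld|\min\{w,\td w\}\) (Lemma~\ref{lem:key-Lipschitz-estimate}), which yields uniqueness and monotonicity.
\item It then extracts limits of difference quotients by Arzel\`a--Ascoli. The limit is unique because \(-v''=\ld\msL_{\ld,0}v\) has no nontrivial admissible solution, seen by integrating the Wronskian of \(v\) and \(w_\ld\) over an interval where \(v>0\).
\item That Wronskian step is the ODE form of your observation: \(w_\ld\) is a positive zero mode of \(-\frac{\dd^2}{\dd x^2}-\ld\phixaxis(x,w_\ld)\), and \(\msL_{\ld,0}<\phixaxis(x,w_\ld)\).
\end{itemize}

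Your spectral version gives more: coercivity of the second variation, hence a maximum principle for the linearization. It also gives a \(C^1\) branch from the implicit function theorem, and uniqueness of positive solutions by Brezis--Oswald/Picone. The latter is valid because \(\phixaxis(x,y)\) is strictly decreasing in \(y>0\).

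The price is a functional-analytic setup, which the paper's estimates relative to \(w_\ld\) avoid and which you have not supplied:
\begin{itemize}
\item Both \(\phixaxis(x,w_\ld)\) and \(\msL_{\ld,0}\) blow up like \(\dist(x,\{\pm1\})^{-1}\). Boundedness and compactness of the form on \(H^1_0\) therefore need Hardy's inequality.
\item The map \(w\mapsto\phixaxis(x,w)w\) is \(C^1\) only near functions bounded away from \(0\) near \(a_2,\dots,a_{n-1}\).
\item Identifying the implicit-function branch with the minimizers requires either positivity of the branch, or continuity of \(\ld\mapsto w_\ld\) together with local uniqueness.
\end{itemize}
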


\subsection{Variational Formulation}

We first prove that \(\CE_\ld(f)\) always admits a nontrivial minimizer.

\begin{lem} \label{lem:existence-barrier}
    For any \(\ld>0\) with \(4\mass\ld<\pi^2\), there exists a \(w_\ld(x)\in C^\infty((-1,1))\cap C^0([-1,1])\) with the following properties.
    \begin{itemize}
        \item \(w_\ld(x) > 0\) for \(-1<x<1\), \(w_\ld(\pm1) = 0\).
        \item \(w_\ld(x)\) is concave.
        \item \(w_\ld(x)\) attains the infimum of \(\CE_\ld(f)\) for \(f\in H^1_0((-1,1))\), and satisfies \eqref{eqn:barrier}.
    \end{itemize}
    Moreover, \(\CE_\ld(0)\) is greater than the infimum of \(\CE_\ld(f)\).
\end{lem}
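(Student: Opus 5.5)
We will use the direct method of the calculus of variations on \(H^1_0((-1,1))\), and then read off positivity, concavity and regularity from the Euler--Lagrange equation.

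\emph{Coercivity and existence.} The integrand \(F(x,f) = \sum_i\sqrt{(x-a_i)^2+f^2}\) satisfies \(F(x,f)\leq \sum_i|x-a_i| + n|f|\). Hence
\[ \CE_\ld(f) \geq \tfrac12\|f'\|_{L^2}^2 - \tfrac{\ld\mass}{2}\|f\|_{L^2}^2 - \tfrac{\ld n}{2}\|f\|_{L^1} - C. \]
The Poincaré inequality on \((-1,1)\) has sharp constant \(\|f\|_{L^2}^2\leq \frac{4}{\pi^2}\|f'\|_{L^2}^2\). Under \(4\mass\ld<\pi^2\), the quadratic part is therefore bounded below by \(\epsilon\|f'\|^2_{L^2}\) for some \(\epsilon>0\). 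The \(L^1\) term is controlled by \(\|f'\|_{L^2}\) linearly, so \(\CE_\ld\) is coercive and bounded below.

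For lower semicontinuity, the Dirichlet term is weakly lower semicontinuous. A minimizing sequence converges strongly in \(C^0\), by compact embedding in one dimension, so the potential terms converge. A minimizer \(w\) exists.

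Since \(F(x,|f|)=F(x,f)\) and \(|f|\in H^1_0\) with \(|\,|f|'\,|=|f'|\), we have \(\CE_\ld(|w|)=\CE_\ld(w)\). We may therefore replace \(w\) by \(|w|\) and assume \(w\geq0\).

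\emph{Nontriviality.} Take a fixed bump \(h\geq0\), say \(h=\cos(\pi x/2)\), and test with \(\epsilon h\). Each term \(\sqrt{(x-a_i)^2+\epsilon^2h^2}-|x-a_i|\) is of order \(\epsilon\) in \(L^1\), not \(\epsilon^2\), near \(x=a_i\). Explicitly,
\[ \int\bigl(\sqrt{s^2+\epsilon^2h^2}-|s|\bigr)\,\dd s \gtrsim \epsilon^2\log(1/\epsilon), \]
obtained by integrating over \(|s|\in(\epsilon,\delta)\), where the integrand is \(\approx \epsilon^2h^2/(2|s|)\). The Dirichlet and \(\mass\) terms are \(O(\epsilon^2)\). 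Hence \(\CE_\ld(\epsilon h)<\CE_\ld(0)\) for small \(\epsilon\). This is the main point where the singular potential is needed, and the log gain makes it work for every \(\ld>0\). So the infimum is below \(\CE_\ld(0)\), and in particular \(w\not\equiv0\).

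\emph{Euler--Lagrange equation and regularity.} The map \(f\mapsto F(x,f)\) is \(C^1\) in \(f\) away from \(f=0,x=a_i\). Its \(f\)-derivative \(\sum_i f/\sqrt{(x-a_i)^2+f^2}\) is bounded by \(n\) everywhere, and it equals \(2(\phixaxis-\mass)f\) where defined. By dominated convergence, the first variation in direction \(\psi\in C_c^\infty\) gives the weak equation
\[ -w''=\ld\,\phixaxis(x,w)\,w, \]
where the right-hand side is a bounded function interpreted as \(\ld(\mass w+\tfrac12\sum_i w/\sqrt{\cdot})\), with value \(0\) when \(w=0\) at \(x=a_i\). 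Thus \(w''\in L^\infty\), so \(w\in W^{2,\infty}\subset C^1([-1,1])\), and \(w\) is continuous up to the boundary with \(w(\pm1)=0\).

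Since \(w\geq0\), the right-hand side is \(\geq0\), so \(-w''\geq0\) and \(w\) is concave. A nontrivial nonnegative concave function vanishing at \(\pm1\) is strictly positive on \((-1,1)\). Consequently the potential is smooth along the graph: \(\phixaxis(x,w(x))\) is smooth on \((-1,1)\) because \(w>0\) there. Bootstrapping \(-w''=\ld\phixaxis(x,w)w\) then gives \(w\in C^\infty((-1,1))\).

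The step most likely to need care is justifying the first variation at points where \(w\) might vanish at some \(a_i\). Our plan handles this with the bounded-derivative and dominated-convergence argument, and a posteriori positivity shows the issue does not actually occur.
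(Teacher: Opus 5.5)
Your proposal is correct and follows essentially the same route as the paper. The common steps are coercivity via the sharp Poincaré inequality, the direct method, replacing \(w\) by \(|w|\), nontriviality from the \(\epsilon^2\log(1/\epsilon)\) gain of the singular potential at an interior point \(a_i\), and concavity, positivity and smoothness read off from the Euler--Lagrange equation. The only differences are minor: the paper's test bump is localized at \(a_2\) rather than using \(\cos(\pi x/2)\), and you justify the first variation via dominated convergence, which the paper does not spell out.

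One small slip: near \(a_i\) the term \(\sqrt{(x-a_i)^2+\epsilon^2h^2}-|x-a_i|\) is of order \(\epsilon\) pointwise at \(x=a_i\), not in \(L^1\). Its \(L^1\) contribution is of order \(\epsilon^2\log(1/\epsilon)\). Your displayed estimate is the correct one and is all you actually use.
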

\begin{proof}
By the Cauchy--Schwarz inequality and the Poincar\'e inequality,
\begin{align*}
    \int_{-1}^1|f(x)|\dd x \leq \sqrt2 \bigl(\int_{-1}^1|f(x)|^2\dd x\bigr)^{\frac{1}{2}} \leq \frac{2\sqrt{2}}{\pi}\bigl(\int_{-1}^1|f'(x)|^2\dd x\bigr)^{\frac{1}{2}} ~.
\end{align*}
It follows that
\begin{align*}
    \CE_\ld(f) &\geq \frac{1}{2} \int_{-1}^1|f'(x)|^2\dd x - \frac{\ld}{2} \int_{-1}^1\sum_{i=1}^n(|x-a_i| + |f(x)|)\dd x {-\frac\ld2\int_{-1}^1\mass\cdot(f(x))^2} \\
    &\geq \frac{\pi^2-4\mass{\ld}}{2\pi^2} \int_{-1}^1|f'(x)|^2\dd x - \frac{\sqrt{2}\ld n}{\pi} \bigl(\int_{-1}^1|f'(x)|^2\dd x\bigr)^{\frac{1}{2}} - \frac{\ld}{2} \int_{-1}^1\sum_{i=1}^n|x-a_i|\dd x ~,
\end{align*}
which implies that \(\CE_\ld\) is coercive when \(4\mass\ld < \pi^2\); namely, \(\CE_\ld(f)\to\infty\) as \(\|f\|_{H^1_0}\to\infty\).

To show that \(\inf\{\CE_\ld(f):f\in H^1_0((-1,1))\} < \CE_\ld(0)\), choose a small bump function \(\chi\) supported on a neighborhood of a singular point other than \(\pm1\).  Specifically, fix \(0<c_1<\frac{1}{4}\min\{a_2-a_1,a_3-a_2\}\); let \(\chi(x) = 1\) when \(|x-a_2|\leq c_1\), and \(\chi(x) = 0\) when \(|x-a_2|\geq 2c_1\).  We compute for any \(\vep>0\),
\begin{align*}
    &\quad \int_{-1}^1\sum_{i=1}^n\bigl(\sqrt{(x-a_i)^2+(\vep\cdot\chi(x))^2} - |x-a_i|\bigr)\dd x \\
    &\geq 2\int_{0}^{c_1} (\sqrt{s^2 + \vep^2} - s)\dd s \\
    &= c_1\sqrt{c_1^2+\vep^2} - c_1^2 + \vep^2\log(c_1+\sqrt{c_1^2+\vep^2}) - \vep^2\log\vep
    > \vep^2 \log\frac{2c_1}{\vep} ~.
\end{align*}
Thus,
\begin{align*}
    \CE_\ld(0) - \CE_\ld(\vep\chi) &\geq - {\frac{\vep^2}2}\int_{-1}^1|\chi'(x)|^2\dd x + \frac{\ld}{2} \int_{-1}^1\sum_{i=1}^n\bigl(\sqrt{(x-a_i)^2+(\vep\cdot\chi(x))^2} - |x-a_i|\bigr)\dd x \\
    &> \vep^2\frac{\ld}{2} \Bigl( \log\frac{2c_1}{\vep} - \frac{c_2}{\ld} \Bigr)
\end{align*}
for some \(c_2>0\).  For \(\vep < 2c_1\exp({-\frac{c_2}{\ld}})\), \(\CE_\ld(0) > \CE_\ld(\vep\chi)\).

It is well known that the Dirichlet integral in \eqref{function:energy} is weakly lower semicontinuous.  By the Sobolev embedding theorem, \(H^1_0((-1,1))\hookrightarrow L^2((-1,1))\) is a compact embedding, and thus the potential term in \eqref{function:energy} is weakly continuous.  By coercivity and weak lower semicontinuity, the direct method in the calculus of variations shows that every minimizing sequence admits a weakly convergent subsequence whose limit attains the infimum of \(\CE_\ld(f)\).  Let \(w_\ld(x)\) be a minimizer of \(\CE_\ld(f)\).  By the Sobolev embedding theorem, \(w_\ld(x)\in C^0([-1,1])\) and \(w_\ld(\pm1) = 0\).

By \cite{Evans-10}*{\S5.10}, \(\CE_\ld(f) = \CE_\ld(|f|)\), and we may assume that \(w_\ld(x) \geq 0\).  Since \(\CE_\ld(0) > \CE_\ld(w_\ld(x))\), \(w_\ld(x)\not\equiv0\).  Since \(w_\ld(x)\geq0\) and satisfies \eqref{eqn:barrier} weakly, \(w_\ld(x)\) is concave.  Since \(w_\ld(x)\not\equiv0\), one infers that \(w_\ld(x) > 0\) for \(-1<x<1\).  Since \(w_\ld(x)\) satisfies \eqref{eqn:barrier}, \(w_\ld(x)\in C^\infty((-1,1))\).
\end{proof}

Lemma~\ref{lem:existence-barrier} does not assert the uniqueness of \(w_\ld(x)\), which will be established later.

\subsection{Estimates on the Solutions}
In this subsection, we derive \(C^0\) estimates on \(w_\ld(x)\).

\begin{lem} \label{lem:barrier-C0-estimate}
    For any \(\ld > 0\) with \(\mass\ld\leq1\), the solution \(w_\ld(x)\) obtained in Lemma~\ref{lem:existence-barrier} satisfies
    \begin{align} \label{eqn:barrier-C0-estimate}
        0 < w_\ld(x) \leq \frac{n\ld}{2}(1-x^2) \quad\text{for } -1<x<1
    \end{align}
    and
    \begin{align} \label{eqn:barrier-boundary-gradient-estimate} \begin{split}
        0 < w'_\ld(-1^+) := \lim_{x\to-1^+}\frac{w_\ld(x)}{x+1} = \lim_{x\to-1^+}w'_\ld(x) \leq {n\ld} ~, \\
        0 > w'_\ld(1^-):= \lim_{x\to1^-}\frac{w_\ld(x)}{x-1} = \lim_{x\to1^-}w'_\ld(x) \geq -{n\ld} ~.
    \end{split} \end{align}
\end{lem}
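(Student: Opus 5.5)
The plan is to bound the right-hand side of \eqref{eqn:barrier} from above by a multiple of the constant function and then compare with a parabola. Since \(w_\ld>0\) on \((-1,1)\), for each \(i\) we have
\[ \frac{w_\ld}{\sqrt{(x-a_i)^2+w_\ld^2}}\leq 1 ~, \quad\text{so}\quad -w''_\ld = \ld\,\mass\, w_\ld + \frac{\ld}{2}\sum_{i=1}^n\frac{w_\ld}{\sqrt{(x-a_i)^2+w_\ld^2}} \leq \ld\mass\, w_\ld + \frac{n\ld}{2} ~. \]
If \(\mass=0\), this gives \(-w''_\ld\leq \frac{n\ld}{2}\). The comparison function \(h(x)=\frac{n\ld}{4}(1-x^2)\) satisfies \(-h''=\frac{n\ld}{2}\) and \(h(\pm1)=0\), so \(w_\ld-h\) is convex and vanishes at \(\pm1\). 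Hence \(w_\ld\leq h\leq \frac{n\ld}{2}(1-x^2)\).

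For general \(\mass\) with \(\mass\ld\leq1\), the term \(\ld\mass w_\ld\) must be absorbed, and this is the only delicate point. I would compare with \(h(x)=\frac{n\ld}{2}(1-x^2)\), for which \(-h''=n\ld\). The difference \(v=w_\ld-h\) satisfies
\[ -v''\leq \ld\mass\, v + \ld\mass h - \frac{n\ld}{2} \leq \ld\mass\, v + \frac{n\ld}{2}\bigl(\ld\mass(1-x^2)-1\bigr)\leq \ld\mass\, v ~. \]
The last step uses \(\ld\mass\leq1\). Now suppose \(v\) has a positive interior maximum. Since \(\ld\mass\leq 1<\pi^2/4\), the first Dirichlet eigenvalue of \(-\frac{\dd^2}{\dd x^2}\) on \((-1,1)\), which is \(\pi^2/4\), exceeds \(\ld\mass\). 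This gives a contradiction by a standard argument: let \(\psi=\cos(\pi x/2)\) and apply the maximum principle to \(v/\psi\), or multiply by \(v_+\) and integrate, getting \(\int|v_+'|^2\leq \ld\mass\int v_+^2\) against Poincar\'e. Either way \(v\leq0\), which proves \eqref{eqn:barrier-C0-estimate}. The positivity is already part of Lemma~\ref{lem:existence-barrier}.

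For the boundary derivatives \eqref{eqn:barrier-boundary-gradient-estimate}, I would use concavity of \(w_\ld\) on \([-1,1]\) together with continuity at the endpoints. The difference quotient \(\frac{w_\ld(x)}{x+1}\) is nonincreasing in \(x\), so its limit as \(x\to-1^+\) exists in \((0,\infty]\). It is positive because it dominates \(\frac{w_\ld(0)}{1}>0\). It is bounded by \(\frac{n\ld}{2}(1-x)\to n\ld\) by the \(C^0\) estimate. Also, \(w'_\ld\) is monotone, so \(\lim_{x\to-1^+}w_\ld'(x)\) exists, and the mean value theorem together with monotonicity identifies it with the limit of the difference quotient. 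The endpoint \(x=1\) follows by the symmetric argument. This also shows \(w_\ld\in C^1([-1,1])\).
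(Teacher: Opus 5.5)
Your proof is correct, but it handles the $\mass$-term differently from the paper.

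The paper does not compare with $\frac{n\ld}{2}(1-x^2)$ directly. It bounds $\phixaxis(x,w_\ld)w_\ld\le \mass\|w_\ld\|_{C^0}+\frac{n}{2}$, so that $w_\ld-\frac{\ld}{4}(n+2\mass\|w_\ld\|_{C^0})(1-x^2)$ is convex and vanishes at $\pm1$. Evaluating at the maximum gives $(1-\frac{\ld\mass}{2})\|w_\ld\|_{C^0}\le\frac{n\ld}{4}$, hence $\|w_\ld\|_{C^0}\le\frac{n\ld}{2}$ when $\mass\ld\le1$. Feeding this back in gives the pointwise bound. That bootstrap uses only the fact that a convex function vanishing at the endpoints is nonpositive.

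Your route instead needs a maximum principle for $-\frac{\dd^2}{\dd x^2}-\ld\mass$, i.e.\ the spectral gap $\ld\mass<\frac{\pi^2}{4}$. In exchange, it is a one-step comparison with the target parabola.

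One small fix is needed. With $\psi=\cos(\frac{\pi x}{2})$, the quotient $v/\psi$ is not controlled at $\pm1$: its boundary values involve $v'(\pm1)$, which you only bound afterwards, so a supremum approached at the boundary is not excluded. Take instead $\psi=\cos(\alpha x)$ with $1<\alpha<\frac{\pi}{2}$. This $\psi$ is positive on $[-1,1]$ and satisfies $-\psi''=\alpha^2\psi>\ld\mass\psi$. Your integral version with $v_+$ is fine as stated, since $0\le -w_\ld''\le \ld\mass w_\ld+\frac{n\ld}{2}$ is bounded and so the boundary terms vanish.

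For the boundary derivatives, the paper uses $0<-w''_\ld\le n\ld$, obtained from the $C^0$ bound, to make $w'_\ld$ Lipschitz, so the one-sided limits exist. You use only concavity (monotone secant slopes and monotone $w'_\ld$) together with the mean value theorem, with finiteness supplied by the $C^0$ estimate. This is equally valid.

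Note that the paper later reuses the bound $|w''_\ld|\le n\ld$ in Lemma~\ref{lem:refined-C0-bound}. Your argument also yields that bound immediately once the $C^0$ estimate is in hand.
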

\begin{proof}
Since \(w_\ld(x)>0\) for \(-1<x<1\),
\begin{align*}
    0 < \phixaxis(x,w_\ld(x))\,w_\ld(x) < {\mass\cdot w_{\ld}(x)}+\frac{n}{2}
    \quad\text{and}\quad 0\leq \mass\cdot w_\ld(x) \leq \mass\cdot\|w_\ld\|_{C^0([-1,1])} ~.
\end{align*}
It follows that
\begin{align*}
    \frac{\dd^2}{\dd x^2}\bigl( w_\ld(x) - \frac{\ld}{4}(n+2\mass\cdot\|w_\ld\|_{C^0})(1-x^2) \bigr) &\geq \bigl(-\frac{\ld}{2}+\frac{\ld}{2}\bigr)(n+2\mass\cdot\|w_\ld\|_{C^0}) = 0 ~.
\end{align*}
The function \(w_\ld(x) - \frac{\ld}{4}(n+2\mass\cdot\|w_\ld\|_{C^0})(1-x^2)\) is convex, and vanishes on \(\pm1\).  Therefore, \(w_\ld(x) \leq \frac{\ld}{4}(n+2\mass\cdot\|w_\ld\|_{C^0})(1-x^2)\).  It follows that when \(\mass\ld \leq 1\), \(\|w_\ld\|_{C^0} \leq \frac{n\ld}{2}\), and
\begin{align*}
    \frac{w_\ld(x)}{1-x^2} &\leq \frac{n\ld}{2} ~.
\end{align*}

Since \(0 < - w''_\ld(x) < (\frac{n\ld}{2} + \ld\mass\|w_\ld\|_{C^0})\leq n\ld\), \(w'_\ld(x)\) is Lipschitz on \((-1,1)\) with Lipschitz constant no greater than \({n\ld}\).  It follows that \(\lim_{x\to 1^-}w'_\ld(x)\) exists, and is equal to \(\lim_{x\to 1^-}\frac{w_\ld(x)}{x-1}\).
Since $w_\ld$ is positive and concave on $(-1,1)$, \(w'_\ld(1^-) < 0\).  The lower bound for \(w'_\ld(1^-)\) follows from \eqref{eqn:barrier-C0-estimate}.  The argument for \(w'_\ld(-1^+)\) is completely the same.
\end{proof}

Note that \eqref{eqn:barrier-C0-estimate} immediately implies that \(w_\ld(x)\to0\) uniformly for \(-1\leq x\leq 1\) as \(\ld\to0\).  Since \(w'_\ld(-1^+) > 0\) and \(w'_\ld(1^-) < 0\),
\begin{align*}
    \inf\{\frac{w_\ld(x)}{1-x^2} : -1 < x < 1\} > 0 ~.
\end{align*}
The next task is to show that the above infimum has a positive lower bound for \(\ld\) bounded away from \(0\).  To this end, we need to show that
\begin{align} \label{function:inf-energy}
    \ul{\CE}(\ld) &= \inf\{\CE_\ld(f) : f\in H^1_0((-1,1))\}
\end{align}
is continuous.

\begin{lem} \label{lem:continuity-inf}
    \(\ul{\CE}(\ld)\) is concave, and hence continuous.
\end{lem}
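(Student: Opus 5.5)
The functional is affine in \(\ld\) for each fixed \(f\), so the infimum over \(f\) of a family of affine functions of \(\ld\) is concave. This is the whole idea, and the plan is to make it precise. For fixed \(f\in H^1_0((-1,1))\), write
\begin{align*}
    \CE_\ld(f) = D(f) - \ld\,V(f) ~,\quad
    D(f) = \frac12\int_{-1}^1|f'|^2\dd x ~,\quad
    V(f) = \frac12\int_{-1}^1\Bigl(\sum_{i=1}^n\sqrt{(x-a_i)^2+f^2} + \mass f^2\Bigr)\dd x ~.
\end{align*}
Both \(D(f)\) and \(V(f)\) are finite, since \(f\in L^2\). Hence \(\ld\mapsto\CE_\ld(f)\) is an affine function of \(\ld\).

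Next I check that \(\ul{\CE}(\ld)\) is finite on the relevant range, namely \(\ld>0\) with \(4\mass\ld<\pi^2\) (in particular on \((0,\frac{1}{\mass+1})\)). It is bounded above by \(\CE_\ld(0)\). It is bounded below by the coercivity estimate in the proof of Lemma~\ref{lem:existence-barrier}: minimizing \(a X^2 - bX - c\) over \(X\ge0\) with \(a>0\) gives a finite lower bound. So \(\ul{\CE}\) is a real-valued function on an interval.

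Concavity is then immediate. For \(\ld_1,\ld_2\) in the interval, \(\theta\in[0,1]\), and any \(f\),
\[ \CE_{\theta\ld_1+(1-\theta)\ld_2}(f) = \theta\CE_{\ld_1}(f)+(1-\theta)\CE_{\ld_2}(f) \geq \theta\ul{\CE}(\ld_1)+(1-\theta)\ul{\CE}(\ld_2) ~. \]
Taking the infimum over \(f\) yields \(\ul{\CE}(\theta\ld_1+(1-\theta)\ld_2)\geq\theta\ul{\CE}(\ld_1)+(1-\theta)\ul{\CE}(\ld_2)\).

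Finally, a finite concave function on an open interval is continuous. This is the standard fact that a convex function is locally Lipschitz, via the three-chord (slope monotonicity) inequality.

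There is no real obstacle here. The only point needing care is finiteness of \(\ul{\CE}\), which I will justify with the coercivity bound above, and the statement that the domain of \(\ld\) is the open interval where that bound holds.
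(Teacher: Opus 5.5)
Your proposal is correct and is the same argument as the paper's: \(\CE_\ld(f)\) is affine in \(\ld\) for each fixed \(f\), so taking the infimum over \(f\) makes \(\ul{\CE}\) concave. Your extra steps fill in what the paper leaves implicit. These are finiteness of \(\ul{\CE}\) on the open interval where \(4\mass\ld<\pi^2\) (upper bound from \(\CE_\ld(0)\), lower bound from the coercivity estimate) and the standard fact that a finite concave function on an open interval is continuous.
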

\begin{proof}
    Since \( \CE_{t\td{\ld} + (1-t)\ld}(f) = t\,\CE_{\td{\ld}}(f) + (1-t)\,\CE_{\ld}(f) \) for \(0\leq t\leq 1\), taking the infimum over \(f\in H^1_0((-1,1))\) leads to the concavity of \(\ul{\CE}(\ld)\).
\end{proof}

\begin{lem} \label{lem:refined-C0-bound}
    For any \(\Ld_1 > \Ld_0 > 0\) with \(\mass\Ld_1\leq1\), there exists a constant \(c_3 = c_3(\Ld_0,\Ld_1) > 0\) such that the following holds.  For any \(\ld \in [\Ld_0,\Ld_1]\), any solution \(w_\ld(x)\) produced by Lemma~\ref{lem:existence-barrier} obeys
    \begin{align} \label{eqn:uniform-lower-bound}
        c_3(1-x^2) \leq w_\ld(x) \leq \frac{n\Ld_1}{2}(1-x^2) ~.
    \end{align}
\end{lem}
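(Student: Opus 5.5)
The upper bound in \eqref{eqn:uniform-lower-bound} is immediate from Lemma~\ref{lem:barrier-C0-estimate}, since \(\ld\leq\Ld_1\) and \(\mass\ld\leq1\). So the content is the lower bound, which I will prove by contradiction combined with a compactness argument, using the continuity of \(\ul{\CE}\) from Lemma~\ref{lem:continuity-inf}.

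Suppose there are \(\ld_k\in[\Ld_0,\Ld_1]\) and minimizers \(w_k := w_{\ld_k}\) with \(\inf_{x}\frac{w_k(x)}{1-x^2}\to0\). Pass to a subsequence with \(\ld_k\to\ld_\infty\in[\Ld_0,\Ld_1]\).

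The first step is uniform regularity. By Lemma~\ref{lem:barrier-C0-estimate} and the proof there, \(0<-w_k''\leq n\ld_k\leq n\Ld_1\). Hence \(w_k\) is bounded in \(C^{1,1}([-1,1])\), and by Arzel\`a--Ascoli a further subsequence converges in \(C^1([-1,1])\) to some \(w_\infty\) which is concave, nonnegative, and vanishes at \(\pm1\). The same convergence holds strongly in \(H^1_0\).

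The second step identifies the limit as a nontrivial minimizer. Since \(\CE_\ld(f)\) depends continuously on \((\ld,f)\) in \(\BR\times H^1_0\), and \(\ul{\CE}\) is continuous, we get
\[
\CE_{\ld_\infty}(w_\infty)=\lim_{k\to\infty}\CE_{\ld_k}(w_k)=\lim_{k\to\infty}\ul{\CE}(\ld_k)=\ul{\CE}(\ld_\infty).
\]
So \(w_\infty\) is a minimizer for \(\ld_\infty\). By the last assertion of Lemma~\ref{lem:existence-barrier}, \(\ul{\CE}(\ld_\infty)<\CE_{\ld_\infty}(0)\), hence \(w_\infty\not\equiv0\). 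Being concave and nonnegative, \(w_\infty\) is positive on \((-1,1)\). It also satisfies \eqref{eqn:barrier}, either by passing to the limit in the equation or directly because it is a minimizer.

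The third step, which I expect to be the main point, turns positivity of the limit into a uniform bound on \(w_k/(1-x^2)\) up to the endpoints; here the \(C^1\) convergence is essential. Since \(w_\infty>0\) on \((-1,1)\), \(w_\infty\) is concave, and \(w_\infty(\pm1)=0\), the derivative satisfies \(w_\infty'(-1^+)>0\) and \(w'_\infty(1^-)<0\), as in Lemma~\ref{lem:barrier-C0-estimate}. Therefore \(\inf_{x}\frac{w_\infty(x)}{1-x^2}=:2\dt>0\).

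Now split the interval. On a compact subinterval \([-1+\eta,1-\eta]\), uniform convergence gives \(w_k\geq \dt(1-x^2)\) for large \(k\). Near \(-1\), write
\[
\frac{w_k(x)}{x+1}=\frac{1}{x+1}\int_{-1}^x w_k'.
\]
Uniform convergence of \(w_k'\) to \(w_\infty'\), together with \(w_\infty'\geq c>0\) on \([-1,-1+\eta]\) for \(\eta\) small, gives a positive lower bound there. The endpoint \(1\) is handled symmetrically. Together these contradict \(\inf_x\frac{w_k}{1-x^2}\to0\).

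Alternatively, concavity alone handles the endpoint regions: \(w_k(x)\geq \min(w_k(-1+\eta),w_k(1-\eta))\cdot\frac{1-x^2}{\text{const}}\) via comparison with the chord, so only interior convergence is really needed. The constant \(c_3\) thus depends only on \(\Ld_0,\Ld_1\), and this argument does not require uniqueness of minimizers.
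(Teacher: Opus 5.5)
Your proof is correct and follows the paper's argument. Both argue by contradiction and get $C^1$ compactness from $0<-w_k''\leq n\Ld_1$ together with the boundary gradient bounds. Both then identify the limit as a nontrivial minimizer via continuity of $\ul{\CE}$ and the strict inequality $\ul{\CE}(\ld)<\CE_\ld(0)$.

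The only difference is the final contradiction step. The paper uses the concavity chord from $0$ to $\pm1$ to reduce everything to $w_{\ld_k}(0)<2\dt_k\to0$, which is essentially your closing ``alternative'' remark. Your main version instead uses $C^1$ convergence near the endpoints, which also works.
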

\begin{proof}
    The upper bound follows directly from \eqref{eqn:barrier-C0-estimate}.  We prove the lower bound by contradiction.  Suppose \eqref{eqn:uniform-lower-bound} fails.  Then, there exist
    \begin{itemize}
        \item a sequence \(\{\dt_k\}_{k\in\BN}\) with \(\dt_k > 0\) for all \(k\) and \(\dt_k \to0\) as \(k\to\infty\);\,
        \item a sequence of points \(\{x_k\}_{k\in\BN}\) with \(-1< x_k< 1\) for all \(k\),
        \item a sequence of parameters \(\{\ld_k\}_{k\in\BN}\) with \(\Ld_0\leq \ld_k\leq \Ld_1\) for all \(k\),
        \item a sequence of solutions \(\{w_{\ld_k}(x)\}_{k\in\BN}\) given by Lemma~\ref{lem:existence-barrier} for \(\ld_k\)
    \end{itemize}
    such that
    \begin{align} \label{eqn:lower-bound-contradiction}
        w_{\ld_k}(x_k) < \dt_k \bigl(1-(x_k)^2\bigr) \quad\text{for all } k~.
    \end{align}

    After passing to a subsequence, we may assume that \(\ld_k\to\hat\ld\) as \(k\to\infty\).  By \eqref{eqn:barrier-boundary-gradient-estimate} and \(0 > w''_{\ld_k}(x) \geq -n\Ld_1\), the Arzel\`a--Ascoli theorem ensures that (after passing to a subsequence,) \(\{w'_{\ld_k}(x)\}\) converges uniformly on \([-1,1]\).  Together with \eqref{eqn:barrier-C0-estimate}, the Arzel\`a--Ascoli theorem ensures that \(\{w_{\ld_k}(x)\}\) converges uniformly on \([-1,1]\).  To sum up, \(w_{\ld_k}(x)\) converges to some \(v_{\hat{\ld}}(x)\) in \(C^1([-1,1])\).  By the \(C^1\)-convergence and Lemma~\ref{lem:continuity-inf},
    \begin{align*}
        \CE_{\hat{\ld}}(v_{\hat\ld}) = \lim_{k\to\infty}\CE_{\ld_k}(w_{\ld_k}) = \lim_{k\to\infty}\ul{\CE}(\ld_k) = \ul{\CE}(\hat\ld) ~.
    \end{align*}
    It follows that \(v_{\hat\ld}(x)\) attains the infimum of \({\CE}_{\hat\ld}(~\cdot~)\).  Due to Lemma~\ref{lem:existence-barrier}, \(v_{\hat\ld}(x)\not\equiv0\) and satisfies \eqref{eqn:barrier} with parameter \(\hat\ld\).

    By construction, \(v_{\hat\ld}(x)\geq0\) and \(v_{\hat\ld}(\pm1) = 0\).  With the same argument as that of Lemma~\ref{lem:existence-barrier}, \(v_{\hat\ld}(x) > 0\) for \(-1<x<1\).  To utilize \eqref{eqn:lower-bound-contradiction}, we may assume that \(x_k\in[0,1)\) for all \(k\).  If \(x_k\in(-1,0]\) for all \(k\), the argument is completely parallel.  Since \(w_{\ld_k}(x)\) is concave,
    \begin{align*}
        &\quad (1-x_k)\,w_{\ld_k}(0) + x_k\,w_{\ld_k}(1) \leq w_{\ld_k}(x_k) < \dt_k \bigl(1-(x_k)^2\bigr) \\
        \Rightarrow &\quad w_{\ld_k}(0) \leq \dt_k(1+x_k) < 2\dt_k ~.
    \end{align*}
    Taking the limit as \(k\to\infty\), \(v_{\hat\ld}(0) \leq 0\), which contradicts \(v_{\hat\ld}(x) > 0\) for \(-1<x<1\).
\end{proof}

\subsection{Lipschitz Estimate in Parameter and Uniqueness}

The purpose of this subsection is to derive a Lipschitz estimate comparing two solutions obtained in Lemma~\ref{lem:existence-barrier}, which will be the key ingredient for the uniqueness and the differentiability in \(\ld\).

\begin{lem} \label{lem:key-Lipschitz-estimate}
    For any \(\Ld_1 > \Ld_0 > 0\) with \(\mass\Ld_1\leq 1\), there exists a constant \(c_4 = c_4(\Ld_0,\Ld_1) > 0\) such that the following holds.  For any \(\ld,\td\ld \in [\Ld_0,\Ld_1]\), let \(w(x)\) and \(\td{w}(x)\) be solutions produced by Lemma~\ref{lem:existence-barrier} with parameters \(\ld\) and \(\td{\ld}\), respectively.  Then,
    \begin{align} \label{eqn:key-inequality}
        \bigl| \td{w}(x) - w(x) \bigr| &\leq c_4\bigl| \td\ld - \ld \bigr|\min\{\td{w}(x),w(x)\}
    \end{align}
    for all \(x\in[-1,1]\).
\end{lem}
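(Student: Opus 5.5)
The plan is a sliding/scaling comparison of the Sturm type, exploiting the sublinear structure of the nonlinearity. Write \(F(x,w) := \phixaxis(x,w)\,w = \mass w + \frac12\sum_i \frac{w}{\sqrt{(x-a_i)^2+w^2}}\). For fixed \(x\), the map \(w\mapsto F(x,w)/w = \phixaxis(x,w)\) is non-increasing in \(w>0\). Hence for \(\mu\geq1\) we have \(F(x,\mu w)\leq \mu F(x,w)\), and the inequality is strict at points near some \(a_i\) with \(1<i<n\). This suggests comparing \(\td w\) with scalar multiples \(\mu w\).

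Fix \(\ld,\td\ld\in[\Ld_0,\Ld_1]\). By symmetry it suffices to prove \(\td w\leq (1+c_4|\td\ld-\ld|)\,w\): the same argument with the roles exchanged gives the reverse bound, and together these yield \eqref{eqn:key-inequality} after adjusting \(c_4\), since \(\td w\leq(1+\epsilon)w\) and \(w\leq(1+\epsilon)\td w\) imply \(|\td w-w|\leq \epsilon\min\{w,\td w\}\).

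\textbf{Step 1: the optimal ratio is finite.} Set \(\mu^* := \sup_{(-1,1)}\td w/w\). This is finite because, by Lemma~\ref{lem:refined-C0-bound}, both functions are comparable to \(1-x^2\). We may assume \(\mu^*>1\), since otherwise there is nothing to prove.

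\textbf{Step 2: the touching point lies in the interior.} The function \(\mu^* w\) touches \(\td w\) from above, either at an interior point \(x_0\) or only asymptotically at \(\pm1\), where the ratio tends to \(\td w'(\pm1)/w'(\pm1)\). The boundary case I would handle with a Hopf-lemma-type argument, or alternatively rule it out by an integral identity. Multiply the equations and integrate by parts (Picone/Wronskian):
\[
\int_{-1}^1 \bigl(\td w\, w'' - w\,\td w''\bigr)\,\dd x = \bigl[\td w\, w' - w\,\td w'\bigr]_{-1}^{1} = 0 ,
\]
which gives
\[
\int_{-1}^1 w\,\td w\,\bigl(\td\ld\,\phixaxis(x,\td w)-\ld\,\phixaxis(x,w)\bigr)\,\dd x = 0 .
\]

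\textbf{Step 3: the quantitative bound at the touching point.} At an interior touching point \(x_0\) we have \(\td w=\mu^* w\) and \((\mu^* w-\td w)''\geq0\). Therefore
\[
\td\ld\,\phixaxis(x_0,\mu^* w)\,\mu^* w \geq \ld\,\mu^*\,\phixaxis(x_0,w)\,w ,
\]
that is, \(\td\ld\,\phixaxis(x_0,\mu^*w)\geq \ld\,\phixaxis(x_0,w)\). Since \(\phixaxis\) is decreasing in \(w\), with \(\pl_w\log\phixaxis\) bounded above by a negative constant on the compact range where \(w\) is pinched between \(c_3(1-x^2)\) and \(\frac{n\Ld_1}{2}(1-x^2)\), this should give
\[
\log\frac{\td\ld}{\ld} \geq \log\phixaxis(x_0,w)-\log\phixaxis(x_0,\mu^*w) \geq c\,(\mu^*-1)
\]
for some \(c>0\), and hence \(\mu^*-1\leq C|\td\ld-\ld|\).

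\textbf{Main obstacle.} The difficulty is the uniformity of the decrease of \(\phixaxis\). Near \(x_0=\pm1\), or where \(x_0\) is far from every \(a_i\) (so that \(w\ll|x-a_i|\)), the derivative \(\pl_w\log\phixaxis\sim -w\,|x-a_i|^{-3}/\phixaxis\) can be very small. The pointwise argument then degenerates, because the touching point need not sit near an \(a_i\).

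To get around this, I would replace the pointwise argument by the integral identity of Step 2, applied to \(\td w\) and \(\mu^* w\). Write \(h:=\mu^*w-\td w\geq0\). Then
\[
-h'' \geq \ld\,\phixaxis(x,w)\,h + \bigl(\text{positive term} \geq c(\mu^*-1)\text{ near } a_2\bigr) - |\td\ld-\ld|\,C\,(1-x^2).
\]
Testing this against \(w\), which is the positive solution of the linearized problem, kills the \(h\) terms, because \(w\) solves \(-w''=\ld\,\phixaxis\,w\) and the boundary terms vanish. What remains is
\[
c(\mu^*-1)\int_{|x-a_2|<c_1} w \;\leq\; C\,|\td\ld-\ld| ,
\]
using the lower bound from Lemma~\ref{lem:refined-C0-bound} near \(a_2\). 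This yields \(\mu^*-1\leq c_4|\td\ld-\ld|\). The key verification is that the strict concavity gap \(\mu\phixaxis(x,\mu w)\) versus \(\mu\phixaxis(x,w)\) is of order \((\mu-1)\) uniformly near \(a_2\), which holds because there \(w\) is bounded below while \(|x-a_2|\) is small.
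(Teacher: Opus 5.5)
There is a genuine gap. The argument you finally commit to, testing against $w$, fails. For $h=\mu^*w-\td w$ one has the exact identity
\[
-h''-\ld\,\phixaxis(x,w)\,h=\td w\,\bigl(\ld\,\phixaxis(x,w)-\td\ld\,\phixaxis(x,\td w)\bigr).
\]
So your differential inequality for $h$ is really a pointwise lower bound on the right-hand side, and you do not have one. Near $a_2$ it requires $\td w/w-1\gtrsim\mu^*-1$, i.e.\ that the ratio be nearly maximal there. Elsewhere it requires $\td w\geq(1-C|\td\ld-\ld|)\,w$, which is half of the conclusion. The mean-value theorem puts the linearized coefficient $\ld\,\pl_y\bigl(y\,\phixaxis(x,y)\bigr)$ in front of $h$, not $\ld\,\phixaxis(x,w)$; swapping one for the other is exactly where the argument breaks.

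Consistently, multiplying the identity by $w$ and integrating returns precisely your Step~2 identity $\int_{-1}^1 w\,\td w\,\bigl(\td\ld\,\phixaxis(x,\td w)-\ld\,\phixaxis(x,w)\bigr)\,\dd x=0$. In it $\mu^*$ does not appear, so no bound on $\mu^*$ can be extracted this way. For the same reason the identity says nothing about where the ratio attains its maximum, so it cannot settle the boundary case of Step~2 either.

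What you set aside is in fact the paper's proof. Your Step~3 is its interior argument: at a maximum $x_0$ of $\msR=\td w/w$ one gets $\td\ld\,\phixaxis(x_0,\td w)\geq\ld\,\phixaxis(x_0,w)$. The paper converts this via $\phixaxis(x,w)/\phixaxis(x,\td w)-1=\msS\,(\td w-w)$ with $c_5^{-1}\leq\msS w\leq c_5$.

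The obstacle you raise is absent for $\ld\in[\Ld_0,\Ld_1]$. By Lemma~\ref{lem:refined-C0-bound}, $w$ and $\td w$ are pinched between multiples of $1-x^2$. On compact subsets of $(-1,1)$ every quantity is therefore of order one; the regime $w\ll|x-a_i|$ only arises as $\ld\to0$. Near $\pm1$, both $w$ and the distance to the endpoint singularity scale like $1\mp x$. Hence $s\,|\pl_s\log\phixaxis(x,s)|$, for $s$ between $w$ and $\mu^*w$, is bounded above and below uniformly on $[-1,1]$. That scale-invariant bound, rather than a bound on $\pl_w\log\phixaxis$ itself, is what Step~3 needs.

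The real missing piece is the endpoint case. Nothing excludes a maximum of $\msR$ at $\pm1$, where $\msR$ extends by $\td w'/w'$, and you do not need to exclude it. You only need the key inequality to hold there in the limiting sense. The paper gets this from the Wronskian $\msW=w\td w'-w'\td w$, which vanishes at $\pm1$ and satisfies $\msW'=\bigl(\ld\,\phixaxis(x,w)-\td\ld\,\phixaxis(x,\td w)\bigr)w\,\td w$. If the inequality failed near $-1$, then $\msW>0$ there, so $\msR'=\msW/w^2>0$, contradicting the maximum at $-1$. Your Hopf idea gives the same: $h\geq0$ with $h(-1)=0=h'(-1^+)$ would be strictly concave near $-1$, forcing $h<0$.
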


\begin{proof}
Consider their ratio and Wronskian:
\begin{align} \label{fct:ratio-and-Wronskian}
    \msR(x) = \frac{\td{w}(x)}{w(x)} \quad\text{and}\quad
    \msW(x) = w(x)\td{w}'(x) - w'(x)\td{w}(x) = (w(x))^2 \msR'(x) ~.
\end{align}
They are smooth functions on \((-1,1)\) and extend continuously to \(\pm1\) with
\begin{align*}
    \msR(-1) = \frac{\td{w}'(-1^+)}{w'(-1^+)} ~,\quad
    \msR(1) = \frac{\td{w}'(1^-)}{w'(1^-)} ~,\quad
    \msW(\pm1) = 0 ~.
\end{align*}
Since both \(w(x)\) and \(\td{w}(x)\) satisfy \eqref{eqn:barrier},
\begin{align} \label{eqn:Wronskian-derivative} \begin{split}
    \msW' &= w\td{w}'' - w''\td{w} =  \bigl(\ld\phixaxis(x,w) -\tilde\ld \phixaxis(x,\td{w})\bigr)w\td{w} \\
    \Rightarrow\quad (w^2\,\msR')' &= \ld\phixaxis(x,w)\Bigl(1 - \frac{\tilde\ld \phixaxis(x,\td{w})}{\ld\phixaxis(x,w)}\Bigr)w\td{w} ~.
\end{split} \end{align}

First, let \(x_0\in[-1,1]\) be a point where \(\msR(x)\) attains its maximum.  We claim that
\begin{align} \label{ineq:key1}
    \frac{\td\ld{ \phixaxis(x_0,\td{w}(x_0))}}{\ld{ \phixaxis(x_0,w(x_0))}}\geq 1~,
\end{align}
where \(\frac{{ \phixaxis(x,{\td w}(x))}}{{\phixaxis(x,w(x))}}\) extends continuously to \(\pm1\) with values
\[
    \frac{ \sqrt{1+w'(1^-)^2}}{ \sqrt{1+\td w'(1^-)^2}} \quad\text{and}\quad
    \frac{ \sqrt{1+w'(-1^+)^2}}{ \sqrt{1+\td w'(-1^+)^2}} ~,
\]
respectively.  We prove \eqref{ineq:key1} by contradiction; suppose \eqref{ineq:key1} fails.  If \(-1<x_0<1\), it follows from \eqref{eqn:Wronskian-derivative} that
\[ w(x_0)^2\msR''(x_0)+2w(x_0)w'(x_0)\msR'(x_0) > 0 ~, \]
which contradicts \(\msR'(x_0) = 0\) and \(\msR''(x_0)\leq 0\).  If \(x_0 = -1\), continuity ensures that there exists a \(\dt>0\) such that \(\frac{\tilde\ld \phixaxis(x,\td{w})}{\ld\phixaxis(x,w)} < 1\) for \(-1\leq x<-1+\dt\).  By \eqref{eqn:Wronskian-derivative},
\[ \bigl((w(x))^2\msR'(x)\bigr)' > 0 \quad\text{on }(-1,-1+\dt) ~. \]
Recall that \(\lim_{x\to-1^+} (w(x))^2\msR'(x) = \lim_{x\to-1^+}\msW(x) = 0\).  Hence, \(\msR(x) > 0\) on \((-1,-1+\dt)\), contrary to the assumption that \(\msR(x)\) attains its maximum at \(x_0 = -1\).  The case when \(x_0 = 1\) is similar.

To estimate \(\td{w}(x) - w(x)\), write
\begin{align}
    \frac{ \phixaxis(x,w(x))}{ \phixaxis(x,\td{w}(x))}-1
    &= \msS(x)\cdot(\td w(x)-w(x)) \label{ineq:key3}
\end{align}
where
\begin{align*}
    \msS(x) &= {\frac12}\frac{\td{w}(x)+w(x)}{\phixaxis(x,\td{w}(x))}\sum_{i=1}^n\Bigl[\frac{1}{\sqrt{(x-a_i)^2+(w(x)^2)}\sqrt{(x-a_i)^2+(\td{w}(x)^2)}}\times &\\
    &\qquad\qquad\qquad\qquad\qquad \frac{1}{\sqrt{(x-a_i)^2+(w(x)^2)}+\sqrt{(x-a_i)^2+(\td{w}(x)^2)}}\Bigr] ~.
\end{align*}
With the help of Lemma~\ref{lem:refined-C0-bound}, it is not hard to see that there exists a constant \(c_5 = c_5(\Ld_0,\Ld_1) > 0\) such that
\[ \frac{1}{c_5} \leq \msS(x)w(x) \leq c_5 ~. \]

It follows from \eqref{ineq:key3} that
\begin{align} \label{ineq:key2}
    \msR(x)-1 &= \frac{\td w(x)-w(x)}{w(x)} = \frac{1}{\msS(x)w(x)}\Bigl[\frac{ \phixaxis(x,w(x))}{ \phixaxis(x,\td{w}(x))}-1 \Bigr]  ~.
\end{align}
Now, fix \(x\in(-1,1)\), and assume that \(\msR(x) > 1 ~\Leftrightarrow~ \td{w}(x) > w(x)\).
By \eqref{ineq:key1} and \eqref{ineq:key2},
\begin{align*}
    \msR(x)-1\le \msR(x_0)-1\le c_5 \frac{\td\ld - \ld}{\ld} ~,
\end{align*}
and hence,
\begin{align*}
    \td{w}(x)-w(x) &\leq c_4(\td{\ld} - \ld)\cdot w(x)
\end{align*}
for some \(c_4 = c_4(\Ld_0,\Ld_1) > 0\).  When \(\msR(x) < 1\), one finds that \(w(x) - \td{w}(x) \leq c_4(\ld - \td{\ld})\cdot\td{w}(x)\) by switching \(w\) and \(\td{w}\).
\end{proof}

\begin{cor} \label{cor:unique-and-monotone}
    For any \(\ld \in(0,\frac{1}{1+\mass})\), the solution given by Lemma~\ref{lem:existence-barrier} with parameter \(\ld\) is unique.  Moreover, when \(\ld > \td{\ld} > 0\), \(w_\ld(x) > w_{\td{\ld}}(x)\) on \((-1,1)\).
\end{cor}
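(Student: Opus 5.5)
Uniqueness follows by applying Lemma~\ref{lem:key-Lipschitz-estimate} with \(\td\ld=\ld\). Fix \(\ld\in(0,\frac{1}{1+\mass})\) and choose \(\Ld_0<\ld<\Ld_1\) with \(\mass\Ld_1\leq1\). This is possible because \(\mass\ld<\frac{\mass}{1+\mass}<1\); for instance, take \(\Ld_1=\frac{1}{1+\mass}\), so that \(\mass\Ld_1<1\). Then \(4\mass\ld<\pi^2\), so Lemma~\ref{lem:existence-barrier} applies. If \(w\) and \(\td w\) are two solutions produced by that lemma with the same parameter \(\ld\), then \eqref{eqn:key-inequality} gives \(|\td w-w|\leq c_4\cdot 0\cdot\min\{\td w,w\}=0\). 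Hence \(\td w\equiv w\). Note that the Lipschitz lemma was stated for arbitrary solutions from Lemma~\ref{lem:existence-barrier} (not a specific choice), so this is legitimate.

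For monotonicity, let \(\ld>\td\ld>0\), with both in the admissible range, and write \(w=w_\ld\), \(\td w=w_{\td\ld}\). I plan to reuse the ratio/Wronskian argument in the proof of Lemma~\ref{lem:key-Lipschitz-estimate}, but at a minimum of \(\msR=\td w/w\) instead of a maximum. First, I show \(\td w\leq w\) everywhere. Suppose not; then \(\max\msR>1\) at some \(x_0\in[-1,1]\). The argument proving \eqref{ineq:key1} gives \(\td\ld\,\phixaxis(x_0,\td w(x_0))\geq\ld\,\phixaxis(x_0,w(x_0))\). Since \(\td\ld<\ld\), this forces \(\phixaxis(x_0,\td w(x_0))>\phixaxis(x_0,w(x_0))\). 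Because \(\phixaxis(x,y)\) is strictly decreasing in \(y>0\), this means \(\td w(x_0)<w(x_0)\), i.e.\ \(\msR(x_0)<1\), a contradiction. At the endpoints one uses the extended values \(\sqrt{1+w'^2}/\sqrt{1+\td w'^2}\), which are again monotone in the slope, so the same contradiction arises. Hence \(\td w\leq w\).

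The main step is strict inequality in the interior. From \eqref{eqn:barrier}, \(d:=w-\td w\geq0\) satisfies
\[ -d''=\ld\phixaxis(x,w)w-\td\ld\phixaxis(x,\td w)\td w . \]
I would rewrite the right-hand side as \((\ld-\td\ld)\phixaxis(x,w)w+\td\ld\bigl(\phixaxis(x,w)w-\phixaxis(x,\td w)\td w\bigr)\). The first term is strictly positive on \((-1,1)\). The second term equals \(\td\ld\,c(x)\,d\) for a bounded coefficient \(c\): the map \(y\mapsto\phixaxis(x,y)y\) is smooth with bounded derivative in \(y>0\) away from the endpoints, and its derivative is \(\msL\)-like as in Theorem~\ref{thm:barrier-summary}. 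Thus \(-d''-\td\ld c\,d>0\) with \(d\geq0\). If \(d(x_1)=0\) at an interior point, then \(x_1\) is a minimum, so \(d''(x_1)\geq0\), which gives \(-d''(x_1)-\td\ld c\,d(x_1)\leq0\), a contradiction. Hence \(w_\ld>w_{\td\ld}\) on \((-1,1)\). The only delicate point I expect is checking that the endpoint analysis in the first step carries over verbatim; the interior strict inequality is routine.
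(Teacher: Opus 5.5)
Your proposal is correct and follows the paper's route: uniqueness comes from \eqref{eqn:key-inequality} with \(\td\ld=\ld\), and monotonicity comes from \eqref{ineq:key1} at the maximum point \(x_0\) of \(\msR\) together with the strict decrease of \(\phixaxis\) in \(y\) (this is exactly the sign information in \eqref{ineq:key2}). Note that this argument already gives \(\msR(x_0)<1\) without assuming \(\max\msR>1\), so \(\msR<1\) on \([-1,1]\) and the strict inequality follows at once, as in the paper; your separate strong-maximum-principle step is correct but redundant.
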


\begin{proof}
The uniqueness is a direct consequence of \eqref{eqn:key-inequality}.  When \(\ld > \td{\ld}\), it follows from \eqref{ineq:key1} and \eqref{ineq:key2} that \(\msR(x_0) < 1\).  Therefore, \(\msR(x) < 1\) on \([-1,1]\).
\end{proof}

\subsection{Differentiability in Parameter}

With the uniqueness of \(w_\ld(x)\), we can discuss its derivative with respect to \(\ld\).

\begin{lem} \label{lem:differentiable-in-lambda}
    The solution \(w_\ld(x)\) given by Lemma~\ref{lem:existence-barrier} is differentiable in \(\ld\in(0,\frac{1}{1+\mass})\).  Its derivative in \(\ld\), \(\pl_\ld w_\ld(x)\), belongs to \(C^1([-1,1])\), it is concave and positive on \((-1,1)\).  Moreover, \(\pl_x^\ell\pl_\ld w_\ld(x) = \pl_\ld\pl_x^\ell w_\ld(x)\) for \(-1< x < 1\) and \(\ell\in\BN\).
\end{lem}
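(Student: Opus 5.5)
The plan is to use Lemma~\ref{lem:key-Lipschitz-estimate} to get compactness of difference quotients, and then uniqueness of the limiting linearized problem to identify the limit. Fix \(\ld\) and a sequence \(h_k\to0\), and set
\[ q_k(x) := \frac{w_{\ld+h_k}(x)-w_\ld(x)}{h_k} ~. \]
By \eqref{eqn:key-inequality}, \(|q_k|\leq c_4 w_\ld\leq c_4\frac{n\ld}{2}(1-x^2)\) uniformly in \(k\). Subtracting the two copies of \eqref{eqn:barrier} gives
\[ -q_k'' = \phixaxis(x,w_{\ld+h_k})\,w_{\ld+h_k} + \ld\,\frac{\phixaxis(x,w_{\ld+h_k})w_{\ld+h_k}-\phixaxis(x,w_\ld)w_\ld}{w_{\ld+h_k}-w_\ld}\,q_k ~. \]
The map \(y\mapsto \phixaxis(x,y)\,y = \mass y+\frac12\sum_i \frac{y}{\sqrt{(x-a_i)^2+y^2}}\) has \(y\)-derivative
\[ \mass+\frac12\sum_i\frac{(x-a_i)^2}{\bigl((x-a_i)^2+y^2\bigr)^{3/2}} ~, \]
which lies in \([\mass,\phixaxis(x,y)]\) and is bounded by \(\mass+\frac12\sum_i|x-a_i|^{-1}\). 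Hence the difference-quotient coefficient is bounded by \(\max\phixaxis\) over the segment between \(w_\ld\) and \(w_{\ld+h_k}\). Near \(\pm1\) and near each \(a_i\), Lemma~\ref{lem:refined-C0-bound} gives \(w\gtrsim 1-x^2\), so this quantity is at most \(C/w\lesssim (1-x^2)^{-1}\) near \(\pm1\) and bounded near interior \(a_i\). Multiplying by \(|q_k|\lesssim 1-x^2\) shows that \(q_k''\) is uniformly bounded on \((-1,1)\).

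By Arzel\`a--Ascoli, a subsequence of \(q_k\) converges in \(C^1([-1,1])\) to some \(q\) with \(q(\pm1)=0\). Passing to the limit in the equation in the interior (with local smooth convergence by bootstrapping) shows that \(q\) solves the linearized equation
\[ -q'' = \ld\,\msL_{\ld,0}\,q + \phixaxis(x,w_\ld)\,w_\ld ~. \]

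To conclude that the full family converges, I show this linear Dirichlet problem has a unique solution. If \(z\) solves the homogeneous problem \(-z''=\ld\msL_{\ld,0}z\) with \(z(\pm1)=0\), compare it with \(w_\ld\), which satisfies \(-w_\ld''=\ld\phixaxis w_\ld\) where \(\phixaxis>\msL_{\ld,0}\) strictly. The Wronskian \(W=w_\ld z'-w_\ld'z\) satisfies
\[ W' = \ld(\phixaxis-\msL_{\ld,0})\,w_\ld z ~, \qquad W(\pm1)=0 ~. \]
A Sturm comparison argument, as in the proof of \eqref{ineq:key1}, then shows \(z\) can have no zero in \((-1,1)\) unless \(z\equiv0\). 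If \(z\) were positive, integrating \(W'\) over \((-1,1)\) would give \(0=\int\ld(\phixaxis-\msL_{\ld,0})w_\ld z>0\), a contradiction. If \(z\) has an interior zero, apply the same integration on a nodal interval \((\alpha,\beta)\): the boundary terms have a definite sign, giving \(0\geq W(\beta)-W(\alpha)=\int_\alpha^\beta(\cdots)>0\), again a contradiction. Uniqueness gives convergence of the whole family, so \(\pl_\ld w_\ld=q\in C^1([-1,1])\).

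The remaining properties follow directly. Concavity holds since the right side of the linear equation is nonnegative once \(q\geq0\). Positivity follows from Corollary~\ref{cor:unique-and-monotone}, which gives \(q\geq0\); then \(-q''\geq\phixaxis w_\ld>0\) and \(q(\pm1)=0\) give \(q>0\). Commutation of \(\pl_x^\ell\) with \(\pl_\ld\) in the interior follows by differentiating the equation and using locally uniform convergence of the derivatives of \(q_k\).

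The main obstacle is the uniform \(C^2\) control of \(q_k\) near the endpoints \(\pm1\), where \(\phixaxis\) is singular and only the product structure \((1-x^2)^{-1}\cdot(1-x^2)\) saves boundedness.
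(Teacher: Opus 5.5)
Your proposal is correct and follows essentially the same route as the paper. Both arguments use difference quotients controlled by Lemma~\ref{lem:key-Lipschitz-estimate}, a uniform bound on \(q_k''\) from \(|\text{coefficient}|\lesssim (1-x^2)^{-1}\) against \(|q_k|\lesssim w_\ld\), Arzel\`a--Ascoli, uniqueness of the linearized problem via the Wronskian with \(w_\ld\) on a nodal interval (using \(\msL_{\ld,0}<\phixaxis(x,w_\ld)\)), and then positivity from Corollary~\ref{cor:unique-and-monotone}. Your aside about a ``Sturm comparison as in \eqref{ineq:key1}'' is not needed, since the nodal-interval integration alone already handles every case.
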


\begin{proof}
For any \(\ld > 0\), choose \(\Ld_1 > \ld\) and \(\Ld_0 < \ld\), so that Lemma~\ref{lem:refined-C0-bound} and \ref{lem:key-Lipschitz-estimate} apply.  For any \(h\neq0\) with \(|h|\) sufficiently small, consider the difference quotient:
\[ q_{\ld,h}(x) = \frac{w_{\ld+h}(x) - w_\ld(x)}{h} ~. \]
To prove that \(\pl_\ld w_\ld(x)\) exists, it suffices to show that for any sequence \(\{h_k\}_{k\in\BN}\) with \(h_k\to 0\), there exists a subsequence \(\{h_{k_\ell}\}_{\ell\in\BN}\) such that \(\{q_{\ld,h_{k_\ell}}(x)\}\) converges, and the limiting function is unique.

To start, Lemma~\ref{lem:key-Lipschitz-estimate} implies that
\begin{align} \label{eqn:d-q-basic-estimate} \begin{split}
    &\bigl|q_{\ld,h}(x)\bigr|\leq c_4\,w_\ld(x) ~, \\
    &\bigl|q'_{\ld,h}(-1^+)\bigr|\leq c_4\,w_\ld'(-1^+)
    \quad\text{and}\quad
    \bigl|q'_{\ld,h}(1^-)\bigr|\leq -c_4\,w_\ld'(1^-) ~.
\end{split} \end{align} 

Since \(w_{\ld+h}(x)\) and \(w_\ld(x)\) are solutions to \eqref{eqn:barrier} (with parameters \(\ld+h\) and \(\ld\), respectively),
\begin{align} \label{eqn:difference-quotient}
    - q_{\ld,h}''(x) &= \ld\,\msL_{\ld,h}(x)\,q_{\ld,h}(x) + \msZ_{\ld,h}(x)  ~,
\end{align}
where
\begin{align} \label{fct:d-q-linear-coefficient} \begin{split}
    \msL_{\ld,h}(x) &= \mass + \frac{1}{2}\sum_{i=1}^n \Bigl[\frac{(x-a_i)^2\bigl(w_{\ld+h}(x)+w_\ld(x)\bigr)}{\sqrt{(x-a_i)^2+(w_{\ld+h}(x))^2}\sqrt{(x-a_i)^2+(w_\ld(x))^2}}\times \\
    &\qquad\qquad \frac{1}{w_\ld(x)\sqrt{(x-a_i)^2+(w_{\ld+h}(x))^2} + w_{\ld+h}(x)\sqrt{(x-a_i)^2+(w_\ld(x))^2}}\Bigr]
\end{split} \end{align}
and
\begin{align} \label{fct:d-q-zeroth-coefficient}
    \msZ_{\ld,h}(x) = \phixaxis(x,w_{\ld+h}(x))\,w_{\ld+h}(x) ~.
\end{align}
Note that the expressions of \(\msL_{\ld,h}\) and \(\msZ_{\ld,h}\) are both valid for \(h=0\).

By \eqref{eqn:uniform-lower-bound} and \eqref{eqn:d-q-basic-estimate}, there exists a constant \(c_6 = c_6(\ld) > 0\) such that \(\bigl|\msL_{\ld,h}(x)\,q_{\ld,h}(x)\bigr|\leq c_6\) on \((-1,1)\).  It is easy to see that \(0<\msZ_{\ld,h}(x)<{n}\) on \((-1,1)\).  By \eqref{eqn:difference-quotient}, \(|q''_{\ld,h}(x)| < c_6+{n}\).  With \eqref{eqn:d-q-basic-estimate} and a standard argument involving the Arzel\`a--Ascoli theorem, any sequence \(\{q_{\ld,h_k}(x) : h_k\to 0\}\) always admits a subsequence that converges in \(C^1\) on \([-1,1]\) and locally smoothly on \((-1,1)\), to a function \({q}_\ld(x)\) satisfying
\begin{align} \label{eqn:derivative-PDE}
    -{q}_\ld''(x) &= \ld\,\msL_{\ld,0}(x)\,{q}_\ld(x) + \msZ_{\ld,0}(x)
\end{align}
with \(|q_\ld(x)| \leq c_4\,w_\ld(x)\), \(|q'_\ld(-1^+)| \leq c_4\,w_\ld'(-1^+)\) and \(|q'_\ld(1^-)| \leq -c_4\,w'_\ld(1^-)\).

To show that \(q_\ld(x)\) is unique, it suffices to show that
\begin{align} \label{eqn:limit-difference}
    -v''(x) = \ld\,\msL_{\ld,0}(x)\,v(x)
\end{align}
with \(|v(x)|\leq 2c_4\,w_\ld(x)\), \(|v'(-1^+)| \leq 2c_4\,w_\ld'(-1^+)\) and \(|v'(1^-)| \leq -2c_4\,w_\ld'(1^-)\)
has only the trivial solution.  If not, we may assume that \(v(x)>0\) on some nontrivial interval $(x_0,x_1)$ with
\begin{align*}
    \lim_{x\to x_0^+}v(x) = \lim_{x\to x_1^-}v(x)=0 ~, \quad
    \lim_{x\to x_0^+}v'(x) \geq 0 \quad\text{and}\quad
    \lim_{x\to x_1^-}v'(x) \leq 0 ~.
\end{align*}
Note that
\begin{align*}
    \msL_{\ld,0}(x) &= \mass + \frac{1}{2}\sum_{i=1}^n \frac{(x-a_i)^2}{(x-a_i)^2+(w_\ld(x))^2}\frac{1}{\sqrt{(x-a_i)^2+(w_\ld(x))^2}} < \phixaxis(x,w_\ld(x))
\end{align*}
on \((-1,1)\).  Therefore,
\begin{align}
    -v''(x) &< \ld\,\phixaxis(x,w_\ld(x))\,v(x) \quad\text{on }(x_0,x_1) ~.
\end{align}
It follows that \(v''(x)w_\ld(x) - v(x)w''_\ld(x) > 0\) on \((x_0,x_1)\), and
\begin{align*}
    0 &< \int_{x_0}^{x_1} \bigl(v''(x)w_\ld(x) - v(x)w''_\ld(x)\bigr)\dd x \\
    &= \lim_{x\to x_1^-} \bigl(v'(x)w_\ld(x) - v(x)w'_\ld(x)\bigr) - \lim_{x\to x_0^+} \bigl(v'(x)w_\ld(x) - v(x)w'_\ld(x)\bigr) \\
    &= w_\ld(x_1)\lim_{x\to x_1^-} v'(x) - w_\ld(x_0) \lim_{x\to x_0^+} v'(x) \leq 0 ~,
\end{align*}
which is a contradiction.  Hence, \(q_\ld(x)\) is unique, and is the derivative of \(w_\ld(x)\) in \(\ld\).

By the same argument as that for Lemma~\ref{lem:barrier-C0-estimate}, \(\lim_{x\to-1^+}\frac{q_\ld(x)}{x+1}\) exists, and is equal to \(\lim_{x\to-1^+}q'_\ld(x)\).  A similar statement holds for \(x\to1^-\).  Hence, \(\pl_\ld w_\ld(x)\in C^1([-1,1])\).

Due to Corollary~\ref{cor:unique-and-monotone}, \(q_\ld(x) \geq 0\).  With \eqref{eqn:derivative-PDE}, \(q_\ld(x)\) is concave.  Since \(q_\ld(\pm1) = 0\) and \(0\) is not a solution of \eqref{eqn:derivative-PDE}, one must have \(q_\ld(x) > 0\) on \((-1,1)\).

The commutation of \(\partial_\lambda\) and \(\partial_x^\ell\) follows from standard interior regularity and difference quotient arguments.
\end{proof}

Since \(\msZ_{\ld,0}(x) = \phi(x,w_\ld(x))w_\ld(x)\), it follows from \eqref{eqn:derivative-PDE} and \eqref{eqn:barrier} that
\begin{align*}
    -\Bigl(\pl_\ld w_\ld(x) - \frac{w_\ld(x)}{\ld} \Bigr)'' &= \ld\,\msL_{\ld,0}(x)\,\pl_\ld w_\ld(x) \geq 0 ~.
\end{align*}
The function \(\pl_\ld w_\ld(x) - \frac{w_\ld(x)}{\ld}\) is concave, and vanishes at \(x=\pm1\).  It must be nonnegative on \([-1,1]\).

\begin{cor} \label{cor:derivative-and-quotient-in-lambda}
    When \(\ld\in(0,\frac{1}{1+\mass})\), the solution \(w_\ld(x)\) given by Lemma~\ref{lem:existence-barrier} satisfies
    \begin{align*}
        \pl_\ld w_\ld(x) \geq \frac{w_\ld(x)}{\ld} \quad\text{for } -1\leq x\leq 1 ~.
    \end{align*}
\end{cor}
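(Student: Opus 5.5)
The plan is to compare \(\pl_\ld w_\ld\) with \(w_\ld/\ld\) through the difference of their equations, and then use concavity together with the boundary values. First I record the two equations. From Lemma~\ref{lem:differentiable-in-lambda}, \(q_\ld = \pl_\ld w_\ld\) satisfies \eqref{eqn:derivative-PDE}:
\begin{align*}
    -q_\ld'' = \ld\,\msL_{\ld,0}\,q_\ld + \msZ_{\ld,0} \quad\text{on }(-1,1) ~,
\end{align*}
with \(\msZ_{\ld,0}(x) = \phixaxis(x,w_\ld(x))\,w_\ld(x)\). Dividing \eqref{eqn:barrier} by \(\ld\) gives
\begin{align*}
    -\Bigl(\frac{w_\ld}{\ld}\Bigr)'' = \phixaxis(x,w_\ld)\,w_\ld = \msZ_{\ld,0} ~.
\end{align*}

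Next I subtract the two equations. Setting \(D := q_\ld - w_\ld/\ld\), the inhomogeneous terms \(\msZ_{\ld,0}\) cancel exactly, and we are left with
\begin{align*}
    -D'' = \ld\,\msL_{\ld,0}\,q_\ld ~.
\end{align*}
Lemma~\ref{lem:differentiable-in-lambda} gives \(q_\ld > 0\) on \((-1,1)\). By its definition, \(\msL_{\ld,0} \geq \mass \geq 0\), and the sum in \(\msL_{\ld,0}\) is strictly positive away from the points \(a_i\). Hence \(-D'' \geq 0\) on \((-1,1)\), so \(D\) is concave there.

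For the boundary values, both \(q_\ld\) and \(w_\ld\) lie in \(C^1([-1,1])\) by Theorem~\ref{thm:barrier-summary} and Lemma~\ref{lem:differentiable-in-lambda}, and both vanish at \(\pm1\). So \(D\) is continuous on \([-1,1]\) with \(D(\pm1) = 0\). A continuous function on \([-1,1]\) that is concave on the interior lies above the chord joining its endpoint values, which here is the zero function. Therefore \(D \geq 0\) on \([-1,1]\), which is the claimed inequality \(\pl_\ld w_\ld \geq w_\ld/\ld\).

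I expect no real obstacle. The only points needing care are the exact cancellation of the \(\msZ_{\ld,0}\) terms and the continuity of \(D\) up to the endpoints, and both follow from results already established. If one also wants strict inequality in the interior, it follows from the strong concavity \(-D'' > 0\) away from the \(a_i\), but the corollary does not require it.
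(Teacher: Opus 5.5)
Your proposal is correct and is the paper's own argument: the $\msZ_{\ld,0}$ terms cancel, which gives $-\bigl(\pl_\ld w_\ld - \frac{w_\ld}{\ld}\bigr)'' = \ld\,\msL_{\ld,0}\,\pl_\ld w_\ld \geq 0$, and a concave function vanishing at $\pm1$ is nonnegative on $[-1,1]$. One minor citation point: for the regularity of $w_\ld$ up to $\pm1$, cite Lemma~\ref{lem:existence-barrier} or Lemma~\ref{lem:barrier-C0-estimate} rather than Theorem~\ref{thm:barrier-summary}, because the proof of that theorem itself uses this corollary.
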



\subsection{Bounding \(C^1\)-norm by \(C^0\)-norm}

To bound the derivative of \(w_\ld\), we first derive a simple calculus lemma.
\begin{lem} \label{lem:calculus-derivative-bound}
    For any \(c_8 > 0\) and \(\dt\in(0,\frac{1}{c_8})\), suppose that \(f(s)\) is a smooth function on $(0,\dt]$ with
\begin{align} \label{eqn:sample-condition}
    f(s) > 0 \quad,\quad 0\leq -f''(s) \leq \frac{2c_8 f(s)}{s} \quad\text{on } (0,\dt]\quad,
    \quad\text{and}\quad \lim_{s\to0^+} f(s) = 0 ~.
\end{align}
Then,
\begin{align} \label{eqn:sample-C1-by-C0}
    f'(0^+) \leq \frac{f(\dt)}{\dt(1-c_8\dt)} ~.
\end{align}
\end{lem}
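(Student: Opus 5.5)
Since \(f\) is concave on \((0,\dt]\), \(f'\) is nonincreasing, so \(f'(0^+) := \lim_{s\to0^+}f'(s)\) exists in \((-\infty,\infty]\). The plan is to show it is finite and bounded by \eqref{eqn:sample-C1-by-C0}, by integrating the differential inequality once and then comparing with the secant slope \(f(\dt)/\dt\).

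First, concavity together with \(f(0^+)=0\) gives \(f(s)\leq s\,f'(\sigma)\) for \(0<\sigma\leq s\). Indeed, for \(0<\sigma<s\) we have \(f(s)-f(\sigma)\geq f'(s)(s-\sigma)\) and \(f(s)-f(\sigma)\leq f'(\sigma)(s-\sigma)\); letting \(\sigma\to0^+\) in the second inequality gives \(f(s)\leq s\,f'(0^+)\). More usefully, \(f(s)/s\) is nonincreasing and \(f(s)/s\geq f'(s)\), and these are the estimates I will use.

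Second, suppose first that \(M:=f'(0^+)<\infty\). Then \(f(s)\leq Ms\), so \(-f''(s)\leq 2c_8M\). Integrating from \(0\) to \(s\) gives \(f'(s)\geq M-2c_8Ms\), and integrating again gives
\[
f(\dt)\geq M\dt-c_8M\dt^2=M\dt(1-c_8\dt),
\]
which is exactly \eqref{eqn:sample-C1-by-C0} because \(1-c_8\dt>0\).

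The main obstacle is ruling out \(M=\infty\) a priori. To handle it, I will work on \([\ep,\dt]\) with \(M_\ep:=f'(\ep)<\infty\). By concavity, \(f(s)\leq f(\ep)+M_\ep(s-\ep)\leq f(\ep)+M_\ep s\). Then \(-f''(s)\leq 2c_8\bigl(M_\ep+f(\ep)/s\bigr)\). Integrating from \(\ep\) to \(s\) gives
\[
f'(s)\geq M_\ep-2c_8M_\ep s-2c_8f(\ep)\log(s/\ep).
\]
Since \(f(\ep)\leq \ep M_\ep\) (from the first step, using concavity with \(f(0^+)=0\)), the last term is at most \(2c_8M_\ep\,\ep\log(\dt/\ep)\), which is \(o(1)\cdot M_\ep\). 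Integrating once more from \(\ep\) to \(\dt\) yields
\[
f(\dt)\geq M_\ep\bigl[(\dt-\ep)(1-c_8\dt)-o(1)\bigr].
\]
So \(M_\ep\) stays bounded as \(\ep\to0\), hence \(M<\infty\), and the second step applies. Alternatively, one can pass to the limit \(\ep\to0\) in this last inequality directly to obtain \eqref{eqn:sample-C1-by-C0}.
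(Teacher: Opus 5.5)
Your second step is correct, and in the case \(M=f'(0^+)<\infty\) it is more elementary than the paper's argument: \(f(s)\le Ms\) gives \(-f''\le 2c_8M\), and integrating twice yields \(f(\dt)\ge M\dt(1-c_8\dt)\). The gap is in the third step, which is where the real content of the lemma lies. There you use \(f(\ep)\le \ep M_\ep=\ep f'(\ep)\), but concavity together with \(f(0^+)=0\) gives the \emph{opposite} inequality. The tangent line at \(\ep\) lies above the graph, and evaluating it at \(s=0\) gives \(0\le f(\ep)-\ep f'(\ep)\). This is the inequality \(f(s)/s\ge f'(s)\) that you recorded yourself in the first step.

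For the same reason, the claim ``\(f(s)\le s f'(\sigma)\) for \(0<\sigma\le s\)'' is false at \(\sigma=s\) unless \(f\) is linear. Only the limiting form \(f(s)\le s f'(0^+)\) holds, and it is vacuous when \(f'(0^+)=\infty\). Concavity alone gives no bound of \(f(\ep)\) by a multiple of \(\ep f'(\ep)\). For example, \(f(s)=1/\log(1/s)\) is concave near \(0\) with \(f(s)/(sf'(s))=\log(1/s)\). So the term \(2c_8 f(\ep)\log(s/\ep)\) is never shown to be \(o(M_\ep)\), and your conclusion that \(M_\ep\) stays bounded does not follow. Your ``alternatively, pass to the limit'' remark relies on the same false bound.

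To rule out \(M=\infty\) you must use the differential inequality, not just concavity. One way is to bootstrap:
\begin{itemize}
\item Fix \(s_1\in(0,\dt]\) and set \(K:=\sup_{(0,s_1]}f\), which is finite since \(f(0^+)=0\).
\item Integrating \(-f''\le 2c_8K/s\) gives \(f'(\ep)\le f'(s_1)+2c_8K\log(s_1/\ep)\).
\item Hence \(f(s)=\int_0^s f'\le Cs\bigl(1+\log(s_1/s)\bigr)\).
\item Hence \(-f''\le 2c_8C\bigl(1+\log(s_1/s)\bigr)\), which is integrable at \(0\). So \(f'(0^+)<\infty\), and your second step finishes the proof.
\end{itemize}

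The paper avoids the case distinction. It shows that \(N(s)=s(1-c_8s)f'(s)-(1-2c_8s)f(s)\) is nondecreasing, which uses both \(f''\le 0\) and \(-f''\le 2c_8f/s\). Its limit at \(0^+\) must be \(0\): a negative limit would force \(f(0^+)>0\), and a positive one would force \(f<0\) near \(0\). Therefore \(f(s)/(s(1-c_8s))\) is nondecreasing. Finiteness of \(f'(0^+)=\lim_{s\to0^+}f(s)/s\) and the asserted bound then follow at once.
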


\begin{proof}
We compute
\begin{align} \label{eqn:sample-ratio-derivative}
    \left(\frac{f(s)}{s(1-c_8s)}\right)' &= \frac{s(1-c_8s)\,f'(s) - (1-2c_8s)\,f(s)}{(s(1-c_8s))^2} ~.
\end{align}
By \eqref{eqn:sample-condition},
\begin{align*}
    \bigl(s(1-c_8s)\,f'(s) - (1-2c_8s)\,f(s)\bigr)' &= s(1-c_8s)\,f''(s) + 2c_8f(s) \geq 0 ~.
\end{align*}
Thus, \(s(1-c_8s)\,f'(s) - (1-2c_8s)\,f(s)\) has a limit \(L_0\in[-\infty,\infty)\) as \(s\to0^+\); indeed, \(L_0\) is its infimum over \((0,\dt)\).  By integrating \eqref{eqn:sample-ratio-derivative} from \(s_0\) to \(s_1\) for \(0<s_0<s_1<\!<\dt\),
\begin{align}
    \frac{f(s_1)}{s_1(1-c_8s_1)} - \frac{f(s_0)}{s_0(1-c_8s_0)} &\sim L_0\bigl( \frac{1}{s_0} - \frac{1}{s_1} \bigr) ~.
\end{align}
By fixing \(s_1\) and letting \(s_0\to0^+\), one finds that \(L_0 = \lim_{s_0\to0^+} f(s_0) = 0\).  It follows that \eqref{eqn:sample-ratio-derivative} is non-negative, and \(\frac{f(s)}{s(1-c_8s)}\) is non-decreasing.  With \(f(s)\geq0\), one infers that
\begin{align*}
    f'(0^+) &= \lim_{s\to0^+}\frac{f(s)}{s} = \lim_{s\to0^+}\frac{f(s)}{s(1-c_8s)} \quad\text{exists} ~,
\end{align*}
and
\begin{align*}
    f'(0^+) \leq \frac{f(\dt)}{\dt(1-c_8\dt)} ~.
\end{align*}
This finishes the proof of this lemma.
\end{proof}

\begin{lem} \label{lem:bound-C1-by-CO}
    There exists a constant \(c_9 > 0\) such that for any \(\ld\in(0,\frac{1}{1+\mass})\), the solution \(w_\ld(x)\) given by Lemma~\ref{lem:existence-barrier} satisfies
    \begin{align*}
        \|w_\ld\|_{C^1([-1,1])} &\leq c_9\|w_\ld\|_{C^0([-1,1])} \quad\text{and} \\
        \|\pl_\ld w_\ld\|_{C^1([-1,1])} &\leq c_9\|\pl_\ld w_\ld\|_{C^0([-1,1])} ~.
    \end{align*}
\end{lem}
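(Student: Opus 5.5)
Since \(w_\ld\) and \(\pl_\ld w_\ld\) are both concave, nonnegative and vanish at \(\pm1\), the \(C^1\)-norm is controlled by the one-sided boundary derivatives: \(f'\) is non-increasing, so \(\sup|f'| = \max\{f'(-1^+),-f'(1^-)\}\). It therefore suffices to bound \(w'_\ld(-1^+)\) and \(-w'_\ld(1^-)\) by a constant times \(\|w_\ld\|_{C^0}\), and likewise for \(\pl_\ld w_\ld\). We treat the endpoint \(-1\); the endpoint \(1\) is symmetric. The tool is Lemma~\ref{lem:calculus-derivative-bound}, applied with \(s = x+1\), \(f(s) = w_\ld(s-1)\) (resp.\ \(\pl_\ld w_\ld(s-1)\)), and a fixed \(\dt\) depending only on the \(a_i\) and on \(\mass\), e.g.\ \(\dt = \min\{\frac12(a_2-a_1),\frac{1}{2c_8}\}\).

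The main step is to verify \(0\le -f''\le \frac{2c_8 f}{s}\) on \((0,\dt]\) with \(c_8\) independent of \(\ld\in(0,\frac1{1+\mass})\). For \(w_\ld\), equation \eqref{eqn:barrier} gives \(-f'' = \ld\phixaxis f\). On \((-1,-1+\dt]\), the singular terms with \(i\ge2\) are bounded by \(\frac{n}{2(a_2-a_1-\dt)}\), and \(\frac{1}{2\sqrt{(x+1)^2+w^2}}\le\frac{1}{2s}\). Together with \(\ld\mass\le1\) and \(\ld<1\), this gives \(\ld\phixaxis\le \frac{c}{s}\) for \(s\le\dt\le1\), so \(c_8\) is uniform. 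For \(\pl_\ld w_\ld\), the equation from Theorem~\ref{thm:barrier-summary} reads \(-f'' = \ld\msL_{\ld,0}f + \phixaxis w_\ld\). The first term is handled as before, since \(\msL_{\ld,0}<\phixaxis\). For the inhomogeneous term we use Corollary~\ref{cor:derivative-and-quotient-in-lambda}: \(w_\ld\le \ld\,\pl_\ld w_\ld\), so \(\phixaxis w_\ld\le \ld\phixaxis\,\pl_\ld w_\ld\le \frac{c}{s}f\). Positivity of \(f\), concavity (\(-f''\ge0\)), and \(f(0^+)=0\) are already known from Lemmas~\ref{lem:existence-barrier}, \ref{lem:barrier-C0-estimate} and \ref{lem:differentiable-in-lambda}.

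Lemma~\ref{lem:calculus-derivative-bound} then gives \(f'(0^+)\le \frac{f(\dt)}{\dt(1-c_8\dt)}\le \frac{2}{\dt}\|f\|_{C^0}\) once \(c_8\dt\le\frac12\). Combined with concavity this yields \(\|f'\|_{C^0}\le \frac{2}{\dt}\|f\|_{C^0}\), and adding \(\|f\|_{C^0}\) gives the claim with \(c_9 = 1+\frac2\dt\). The only real obstacle is checking that \(c_8\) and \(\dt\) do not depend on \(\ld\), including as \(\ld\to0\), where no lower bound like Lemma~\ref{lem:refined-C0-bound} is available. The estimate above avoids any lower bound on \(w_\ld\); it only uses \(\frac{1}{\sqrt{s^2+w^2}}\le\frac1s\) and the upper bound \(\ld<1\). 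Near the endpoint \(1\), we apply the same argument to \(f(s)=w_\ld(1-s)\), with \(a_{n}-a_{n-1}\) in place of \(a_2-a_1\).
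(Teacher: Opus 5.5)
Your proposal is correct and follows essentially the same route as the paper. Concavity reduces the $C^1$ bound to the one-sided boundary derivatives, and Lemma~\ref{lem:calculus-derivative-bound} is then applied near each endpoint with constants independent of $\ld$. For $\pl_\ld w_\ld$, the inhomogeneous term $\phixaxis(x,w_\ld)\,w_\ld$ is absorbed using $w_\ld\le\ld\,\pl_\ld w_\ld$ from Corollary~\ref{cor:derivative-and-quotient-in-lambda}, exactly as the paper does. Your explicit check that $c_8$ and $\dt$ stay uniform as $\ld\to0$ fills in a point the paper leaves implicit; it uses only $\frac{1}{\sqrt{s^2+w_\ld^2}}\le\frac1s$, $\ld<1$ and $\ld\mass<1$, with no lower bound on $w_\ld$.
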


\begin{proof}
By \eqref{eqn:barrier}, there exists \(c_{10} > 0\) and \(\dt\in(0,\frac{1}{2c_{10}})\) such that
\begin{align*}
    0\leq - w''_\ld(x) \leq \frac{2c_{10}w_\ld(x)}{x+1} \quad\text{on }(-1,-1+\dt] ~.
\end{align*}
According to Lemma~\ref{lem:calculus-derivative-bound}, \(w'_\ld(-1^+) \leq c_{11}\,w_\ld(-1+\dt)\).  By the same token, there is a similar upper bound for \(-w'_\ld(1^-)\).  We conclude from the concavity of \(w_\ld(x)\) that
\[ \|w_\ld\|_{C^1} \leq c_9\|w_\ld\|_{C^0} \]
for some constant \(c_9 > 0\).

For \(\pl_\ld w_\ld(x)\), it follows from \eqref{eqn:derivative-PDE} and Corollary~\ref{cor:derivative-and-quotient-in-lambda} that
\begin{align*}
    -\bigl(\pl_\ld w_\ld(x)\bigr)'' &\leq \ld\,\msL_{\ld,0}(x)\,\pl_\ld w_\ld(x) + \ld\phixaxis(x,w_\ld(x))\,\pl_\ld w_\ld(x) ~.
\end{align*}
With the same argument as above, the asserted estimate on \(\pl_\ld w_\ld(x)\) holds true.
\end{proof}


\section{Asymptotic Analysis of the Barrier} \label{sec:asymptotic}

The main purpose of this section is to analyze the asymptotic behavior of \(w_\ld\) given by Theorem~\ref{thm:barrier-summary}.  Since \(0\leq w_\ld(x)\leq\frac{n\ld}{2}(1-x^2)\), \(w_\ld\to 0\) as \(\ld\to0\) in \(C^0([-1,1])\).  For our purposes, we need to know the decay rate of \(\|w_\ld\|_{C^0}\) and the limit of the rescaled functions \(\frac{w_\ld(x)}{\|w_\ld\|_{C^0}}\) as \(\ld\to0\).

Not surprisingly, the configuration of the singular points plays an important role.  For \(-1 = a_1 < a_2 < \cdots < a_n = 1\), let \(A\in\Sym_{n-2}(\BR)\) be
\begin{align*}
    \begin{bmatrix}
        \frac{1}{a_2-a_1} + \frac{1}{a_3-a_2} & -\frac{1}{a_3-a_2} & 0   & \cdots & 0 \\ -\frac{1}{a_3-a_2} & \frac{1}{a_3-a_2} + \frac{1}{a_4-a_3} & -\frac{1}{a_4-a_3} & \ddots & \vdots \\ 0   & -\frac{1}{a_4-a_3} & \frac{1}{a_4-a_3} + \frac{1}{a_5-a_4} & \ddots & 0 \\ \vdots & \ddots & \ddots & \ddots & -\frac{1}{a_{n-1}-a_{n-2}} \\ 0 & \cdots & 0 & -\frac{1}{a_{n-1}-a_{n-2}} & \frac{1}{a_{n-1}-a_{n-2}} + \frac{1}{a_n - a_{n-1}}
    \end{bmatrix} ~.
\end{align*}

\begin{lem} \label{lem:leading-eigenvector}
    The matrix \(A\) has a unique eigenvector in \(\BR_{+}^{n-2}\), up to positive scalar multiple.
\end{lem}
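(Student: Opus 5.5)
The plan is to reduce the statement to the Perron--Frobenius theorem applied to a shifted matrix, and then obtain uniqueness from the symmetry of \(A\). Write \(d_j = a_{j+1}-a_j > 0\) for \(j=1,\ldots,n-1\). The matrix \(A\) is the \((n-2)\times(n-2)\) tridiagonal matrix with diagonal entries \(\frac{1}{d_{j}}+\frac{1}{d_{j+1}}\) and off-diagonal entries \(-\frac{1}{d_{j+1}}\), all of them strictly negative. If \(n=3\), then \(A\) is a positive \(1\times1\) matrix and there is nothing to prove, so I will assume \(n\geq4\).

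\textbf{Step 1 (shift to a nonnegative irreducible matrix).} Choose a constant \(c\) larger than every diagonal entry of \(A\), for instance \(c = 1+\max_j\bigl(\frac{1}{d_j}+\frac{1}{d_{j+1}}\bigr)\). Set \(B := cI - A\). Then \(B\) is symmetric, and all of its entries are nonnegative: the diagonal entries are positive, and the off-diagonal entries are \(\frac{1}{d_{j+1}}>0\) on the first super- and sub-diagonals and \(0\) elsewhere. The associated graph is the path \(1-2-\cdots-(n-2)\), which is connected, so \(B\) is irreducible. Moreover, \(B\) and \(A\) have the same eigenvectors, with eigenvalues related by \(\mu_B = c-\mu_A\).

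\textbf{Step 2 (existence).} By the Perron--Frobenius theorem for nonnegative irreducible matrices, the spectral radius \(\rho(B)\) is a simple eigenvalue of \(B\). It has an eigenvector \(\xi\) with all entries strictly positive. Hence \(\xi\in\BR_+^{n-2}\) is an eigenvector of \(A\), with eigenvalue \(c-\rho(B)\), which is the smallest eigenvalue of \(A\).

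\textbf{Step 3 (uniqueness).} Suppose that \(\eta\in\BR_+^{n-2}\) is an eigenvector of \(A\) with eigenvalue \(\mu\). There are two cases.
\begin{itemize}
\item If \(\mu = c-\rho(B)\), then simplicity of this eigenvalue forces \(\eta\) to be a multiple of \(\xi\). Since both vectors are positive, the multiple is positive.
\item Otherwise \(\eta\) and \(\xi\) are eigenvectors of the symmetric matrix \(A\) with distinct eigenvalues, so \(\ip{\eta}{\xi}=0\). This is impossible, because both vectors have strictly positive entries and so \(\ip{\eta}{\xi}>0\).
\end{itemize}
If "\(\BR_+\)" is meant to include nonnegative entries, the same argument still works: \(\ip{\eta}{\xi}>0\) for any nonzero \(\eta\) with nonnegative entries. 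In addition, any nonnegative eigenvector is automatically strictly positive, by the standard irreducibility argument.

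The only point that needs any care is checking irreducibility in Step 1. This amounts to noting that every off-diagonal entry \(-\frac{1}{a_{j+1}-a_j}\) is nonzero. I do not expect a real obstacle. As an optional remark, one can also observe that \(A\) is positive definite, since \(x^TAx = \sum_{j=1}^{n-1}\frac{(x_{j}-x_{j-1})^2}{d_j}\) with the convention \(x_0=x_{n-1}=0\). This shows that the Perron eigenvalue \(c-\rho(B)\) of \(A\) is positive, which is presumably how it will be used later.
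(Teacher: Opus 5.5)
Your proposal is correct and takes the same route as the paper: the paper's proof also shifts to \(c\,\mathbf{I}_{n-2}-A\), notes that it is entrywise nonnegative and irreducible for large \(c\), and cites Perron--Frobenius. The difference is that you spell out the uniqueness step, via simplicity of the Perron eigenvalue plus orthogonality of eigenvectors of the symmetric matrix \(A\), where the paper leaves it to the theorem. Your positive-definiteness remark is also a useful addition, since it justifies \(\mu>0\), which the paper uses later.
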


\begin{proof}
For any \(c>0\), let \(\td{A}_c = c\,\bfI_{n-2} - A\).  When \(c\) is sufficiently large, every entry of \(\td{A}_c\) is nonnegative.  Such a matrix \(\td{A}_c\) is automatically irreducible, since its associated directed graph is strongly connected. The lemma follows directly from the Perron--Frobenius theorem.
\end{proof}

\begin{defn} \label{defn:eigenvalue-leading-linear-function}
    For the eigenvector given by Lemma~\ref{lem:leading-eigenvector}, denote by \(\mu\) the corresponding eigenvalue.  Let \(\mr{w}:[-1,1]\to[0,\infty)\) be the unique piecewise linear function characterized by the following properties.
    \begin{itemize}
        \item \(\mr{w}(\pm1) = 0\).
        \item \((\mr{w}(a_2),\cdots,\mr{w}(a_{n-1}))\) is the eigenvector given by Lemma~\ref{lem:leading-eigenvector} with \(\max_{1<i<n}\mr{w}(a_i) = 1\).
        \item \(\mr{w}\) is linear on each \([a_i,a_{i+1}]\) for \(i=1,\ldots,n-1\).
    \end{itemize}
\end{defn}

The eigenvalue-eigenvector equation reads
\begin{align}
    \frac{\mr{w}(a_i) - \mr{w}(a_{i-1})}{a_i - a_{i-1}} - \frac{\mr{w}(a_{i+1}) - \mr{w}(a_{i})}{a_{i+1} - a_{i}} &= \mu\,\mr{w}(a_i)
\end{align}
for \(i=2,\ldots,n-1\).  Summing over \(i\) gives
\begin{align} \label{formula.for.Ld}
    \mr{w}'(-1) - \mr{w}'(1) &= \mu\,\sum_{i=2}^{n-1} \mr{w}(a_i) = \mu\,\sum_{i=1}^{n} \mr{w}(a_i) ~.
\end{align}

The following lemma contains the calculation needed to determine the rescaled limit.
\begin{lem}\label{integration.lemma}
    For any positive \(\dt,\kp_0,\kp_1\), there exists a constant \(c = c(\dt,\kp_0,\kp_1) > 0\) with the following property.  Let \(\tau\in(0,1)\).  For any \(C^1\) function \(f:[0,\dt]\to(0,1)\) with
    \begin{align*}
        \frac{1}{\kp_0}\tau \leq f(s) \leq \kp_0\,\tau  \quad\text{and}\quad
        |f'(s)| \leq \kp_1\,\tau  \quad\text{for } s\in[0,\dt] ~,
    \end{align*}
    it satisfies
    \begin{align}
        \Bigl| \int_0^\dt \frac{\dd s}{\sqrt{s^2 + (f(s))^2}} + \log f(0) \Bigr| &\leq c ~, \label{integration.lemma1} \\
        \Bigl| \int_0^\dt \frac{\dd s}{({s^2 + (f(s))^2})^{\frac{3}2}} - \frac{1}{(f(0))^2} \Bigr| &\leq \frac{c}{\tau} ~, \label{integration.lemma3} \\
        \Bigl| \int_0^\dt \frac{s^2\,\dd s}{({s^2 + (f(s))^2})^{\frac{3}2}} + \log f(0) \Bigr| &\leq c ~. \label{integration.lemma2}
    \end{align}
\end{lem}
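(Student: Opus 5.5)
The plan is to compare \(f(s)\) with the constant \(f(0)\) and compute the resulting model integrals explicitly.

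\textbf{Step 1: the model integrals.} Set \(a=f(0)\), so \(a\in[\tau/\kp_0,\kp_0\tau]\). Then:
\begin{align*}
\int_0^\dt\frac{\dd s}{\sqrt{s^2+a^2}} &= \log\frac{\dt+\sqrt{\dt^2+a^2}}{a} ~, \\
\int_0^\dt\frac{\dd s}{(s^2+a^2)^{3/2}} &= \frac{\dt}{a^2\sqrt{\dt^2+a^2}} ~, \\
\int_0^\dt\frac{s^2\,\dd s}{(s^2+a^2)^{3/2}} &= \int_0^\dt\frac{\dd s}{\sqrt{s^2+a^2}} - a^2\int_0^\dt\frac{\dd s}{(s^2+a^2)^{3/2}} ~.
\end{align*}
Since \(a<\kp_0\) is bounded, the first equals \(-\log a+O(1)\), with \(O(1)\) depending only on \(\dt,\kp_0\). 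The second equals \(a^{-2}\bigl(1+O(a^2)\bigr)^{-1/2}\), which is \(a^{-2}+O(1)\); since \(\tau<1\), this is \(a^{-2}+O(1/\tau)\). The third is then \(-\log a+O(1)\).

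\textbf{Step 2: replace \(f(s)\) by \(a\).} By the mean value theorem, \(|f(s)-a|\le\kp_1\tau s\). So
\[
|f(s)^2-a^2|\le \kp_1\tau s\,(f(s)+a)\le 2\kp_0\kp_1\tau^2 s ~.
\]
Write \(D(s)=s^2+f(s)^2\) and \(D_0(s)=s^2+a^2\). Both are comparable to \(s^2+\tau^2\), with constants depending on \(\kp_0\). For a power \(p\in\{\tfrac12,\tfrac32\}\), the mean value theorem gives
\[
|D^{-p}-D_0^{-p}|\lesssim |D-D_0|\,(s^2+\tau^2)^{-p-1}\lesssim \tau^2 s\,(s^2+\tau^2)^{-p-1} ~.
\]

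\textbf{Step 3: integrate the error bounds.} For \(p=\tfrac12\), substitute \(s=\tau\sigma\):
\[
\int_0^\dt \tau^2 s\,(s^2+\tau^2)^{-3/2}\,\dd s=\int_0^{\dt/\tau}\frac{\sigma\,\dd\sigma}{(1+\sigma^2)^{3/2}}\le 1 ~.
\]
This gives \eqref{integration.lemma1}. For \(p=\tfrac32\), the same substitution gives
\[
\int_0^\dt \tau^2 s\,(s^2+\tau^2)^{-5/2}\,\dd s=\tau^{-1}\int_0^\infty\frac{\sigma\,\dd\sigma}{(1+\sigma^2)^{5/2}}\lesssim \frac1\tau ~,
\]
which gives \eqref{integration.lemma3}. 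For \eqref{integration.lemma2}, I keep the weight \(s^2\) in the error. This yields
\[
\int_0^\dt \tau^2 s^3(s^2+\tau^2)^{-5/2}\,\dd s=\int_0^{\dt/\tau}\frac{\sigma^3\,\dd\sigma}{(1+\sigma^2)^{5/2}}\le C ~.
\]
Combining with Step 1 proves all three estimates.

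The main obstacle is purely technical. The error for the \(3/2\)-power integral is only of size \(O(1/\tau)\) and not \(O(1)\), so it must be integrated with the sharp scaling \(s=\tau\sigma\). A crude bound would lose a logarithm, or a further power of \(\tau\), at that step.
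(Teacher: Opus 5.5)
Your proposal is correct and follows the paper's route: replace $f(s)$ by $f(0)$, evaluate the model integral in closed form, and control the difference by the mean value theorem with an integrable kernel. You also supply the two estimates the paper leaves as an exercise, using $s^2(s^2+a^2)^{-3/2}=(s^2+a^2)^{-1/2}-a^2(s^2+a^2)^{-3/2}$ for the third; the one slip is harmless, since the substitution $s=\tau\sigma$ in the $p=\tfrac12$ and $s^2$-weighted error integrals actually leaves an extra factor $\tau$ (e.g.\ $\tau^2\int_0^\dt s(s^2+\tau^2)^{-3/2}\,\dd s\le\tau$), so those bounds hold with room to spare.
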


\begin{proof}
For \eqref{integration.lemma1},
\begin{align*}
    \int_0^\dt\frac{\dd s}{\sqrt{s^2 + (f(s))^2}}
    &= \int_0^\dt\frac{\dd s}{\sqrt{s^2 + (f(0))^2}}
    + \int_0^\dt \bigl(\frac{\dd s}{\sqrt{s^2 + (f(s))^2}} - \frac{\dd s}{\sqrt{s^2 + (f(0))^2}}\bigr)~.
\end{align*}
The first term on the right-hand side is \(\log(\dt + \sqrt{\dt^2+(f(0))^2}) - \log f(0)\), and note that \(\dt \leq \dt + \sqrt{\dt^2+(f(0))^2} \leq \dt + \sqrt{\dt^2+\kp_0^2}\).  For the second term on the right-hand side, it follows from the mean value theorem that
\begin{align*}
    \Bigl| \frac{1}{\sqrt{s^2 + (f(s))^2}} - \frac{1}{\sqrt{s^2 + (f(0))^2}} \Bigr|
    &\leq \frac{2\kp_0\kp_1\tau^2s}{(s^2+\kp_0^{-2}\tau^2)^{\frac{3}2}} ~,
\end{align*}
and therefore,
\begin{align*}
    \int_0^\dt \Bigl|\frac{1}{\sqrt{s^2 + (f(s))^2}} - \frac{1}{\sqrt{s^2 + (f(0))^2}}\Bigr|\dd s &\leq \int_0^\dt \frac{2\kp_0\kp_1\tau^2\,s\,\dd s}{(s^2+\kp_0^{-2}\tau^2)^{\frac{3}2}} < 2\kp_0^2\kp_1 ~.
\end{align*}
Putting these together proves \eqref{integration.lemma1}.

The arguments for \eqref{integration.lemma3} and \eqref{integration.lemma2} are similar, and are left as an exercise for the reader.
\end{proof}

\begin{prop} \label{limit.model}
The function \(\mr{w}(x)\) is the leading order part of \(w_\ld(x)\) in the following sense.  As \(\ld\to0\),
\begin{align*}
    \frac{w_\ld(x)}{\|w_\ld(\,\cdot\,)\|_{C^0([-1,1])}} \to \mr{w}(x) \quad\text{and}\quad
    \frac{\pl_\ld w_\ld(x)}{\|\pl_\ld w_\ld(\,\cdot\,)\|_{C^0([-1,1])}} \to \mr{w}(x)
\end{align*}
uniformly for \(-1\leq x\leq 1\).
Moreover, the above two convergences are \(C^1\) on any compact subset of \([-1,1]\setminus\{a_2,\cdots,a_{n-1}\}\).  It follows that
\begin{align*}
    \frac{w_\ld(x)}{\|w_\ld(\,\cdot\,)\|_{C^0([-1,1])}\cdot \mr{w}(x)} \to 1 \quad\text{and}\quad
    \frac{\pl_\ld w_\ld(x)}{\|\pl_\ld w_\ld(\,\cdot\,)\|_{C^0([-1,1])}\cdot \mr{w}(x)} \to 1
\end{align*}
uniformly for \(-1\leq x\leq 1\).
\end{prop}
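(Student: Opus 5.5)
Set \(\tau = \tau_\ld := \|w_\ld\|_{C^0([-1,1])}\) and \(W_\ld := w_\ld/\tau_\ld\).  The plan is a compactness argument followed by identification of the limit through a matching condition at the singular points \(a_i\).

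\textbf{Compactness.}  By Lemma~\ref{lem:bound-C1-by-CO}, \(\|W_\ld\|_{C^1}\leq c_9\), and each \(W_\ld\) is concave with \(W_\ld(\pm1)=0\) and \(\max W_\ld = 1\).  Equation \eqref{eqn:barrier} gives \(-W_\ld'' = \ld\,\phixaxis(x,w_\ld)W_\ld\).  On any compact set \(K\) away from \(\{a_1,\ldots,a_n\}\), \(\phixaxis\) is bounded, so \(W_\ld''\to0\) uniformly on \(K\) as \(\ld\to0\).  By Arzel\`a--Ascoli, any sequence \(\ld_k\to0\) has a subsequence with \(W_{\ld_k}\to W\) uniformly on \([-1,1]\) and in \(C^1\) on compact subsets avoiding the \(a_i\).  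The limit \(W\) is concave, satisfies \(W(\pm1)=0\) and \(\max W=1\), and is linear on each \([a_i,a_{i+1}]\).

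\textbf{Matching at the singular points.}  At each interior \(a_i\), I will integrate the equation across \([a_i-\dt,a_i+\dt]\).  This gives
\[ W_\ld'(a_i-\dt)-W_\ld'(a_i+\dt) = \ld\int_{a_i-\dt}^{a_i+\dt}\phixaxis(x,w_\ld)W_\ld\,\dd x . \]
The dominant contribution is \(\frac{\ld}{2}\int\frac{W_\ld\,\dd x}{\sqrt{(x-a_i)^2+w_\ld^2}}\).

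First I need two-sided bounds \(w_\ld(a_i)\sim\tau\), that is, \(W(a_i)>0\) for the limit.  Since \(W\) is concave, nonnegative and not identically zero, it is positive on \((-1,1)\), so this holds automatically.  Then Lemma~\ref{integration.lemma}, specifically \eqref{integration.lemma1} with \(f=w_\ld(a_i\pm s)\), applies.  Its hypotheses are \(f\sim\tau\) and \(|f'|\lesssim\tau\), and the latter comes from Lemma~\ref{lem:bound-C1-by-CO}.  This yields
\[ \text{RHS} = \ld\,W_\ld(a_i)\bigl(-\log\tau + O(1)\bigr) + O(\ld) . \]

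Since the left-hand side converges to the jump of \(W'\) at \(a_i\) (up to an \(O(\dt)\) error that vanishes as \(\dt\to0\)), the quantity \(\ld\log(1/\tau_\ld)\) must stay bounded.  Along a further subsequence it converges to some \(\nu\ge0\), and then
\[ W'(a_i^-)-W'(a_i^+) = \nu\,W(a_i) . \]
This says exactly that \((W(a_2),\ldots,W(a_{n-1}))\) is a nonnegative eigenvector of \(A\) with eigenvalue \(\nu\).  We have \(\nu>0\), since otherwise \(W\) would be linear with zero boundary values, contradicting \(\max W = 1\).  By Lemma~\ref{lem:leading-eigenvector}, the eigenvector is positive and unique, so \(\nu=\mu\) and \(W=\mr{w}\) after normalization.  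Uniqueness of the limit upgrades the subsequential convergence to full convergence as \(\ld\to0\), and it also yields the rate \(\ld\log(1/\tau_\ld)\to\mu\).

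\textbf{The derivative \(\pl_\ld w_\ld\).}  I will run the same argument for \(V_\ld := \pl_\ld w_\ld/\|\pl_\ld w_\ld\|_{C^0}\), using the equation in Theorem~\ref{thm:barrier-summary}.  The main difficulty is the inhomogeneous term \(\phixaxis(x,w_\ld)w_\ld\) after normalization.  I will show it is negligible: since \(\pl_\ld w_\ld\ge w_\ld/\ld\) by Corollary~\ref{cor:derivative-and-quotient-in-lambda}, we get \(w_\ld/\|\pl_\ld w_\ld\|_{C^0}\le\ld\).  Near each \(a_i\), the integral of \(\phixaxis w_\ld\) is then \(O(\ld\log(1/\tau)) = O(1)\) relative to \(\|\pl_\ld w\|\,\ld\).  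I expect this step to need care, and I will bound the term by \(\ld(-\log\tau)\ld\), which tends to \(0\).

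The linear coefficient is \(\msL_{\ld,0}\), and its singular integral is handled by \eqref{integration.lemma2}, which gives the same \(-\log\tau\) asymptotics.  Hence the matching condition is again \(\mu\)-eigenvalue, and the limit is again \(\mr{w}\).

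\textbf{Ratio convergence.}  The final ratio statement is clear in the interior.  Near \(\pm1\), where \(\mr{w}\to0\), I will use \(C^1\) convergence up to \(\pm1\) together with \(\mr{w}'(\pm1)\ne0\).  The ratio there is a difference quotient, so its convergence follows from \(W_\ld'\to\mr{w}'\) uniformly on \([-1,-1+\dt]\), and likewise for \(V_\ld\).  I expect the main obstacle to be the uniform control near the \(a_i\), where Lemma~\ref{integration.lemma} must be applied with constants uniform in \(\ld\).
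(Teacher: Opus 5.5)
Your treatment of \(w_\ld/\|w_\ld\|_{C^0}\) is sound. Reading off \(\nu=\lim\ld\log(1/\tau_\ld)\) from the jump at a single node is a tidy variant of the paper's Step~1c, which compares consecutive nodes through ratios, and it gives Lemma~\ref{lem:C0-asymptotic} for free. The gap is in the \(\pl_\ld w_\ld\) part, exactly at the step you flagged.

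Write \(\sigma_\ld=\|\pl_\ld w_\ld\|_{C^0}\) and \(V_\ld=\pl_\ld w_\ld/\sigma_\ld\). Across \([a_i-\dt,a_i+\dt]\), the inhomogeneous term contributes to the jump of \(V_\ld'\) the amount
\[ \frac{1}{\sigma_\ld}\int_{a_i-\dt}^{a_i+\dt}\phixaxis(x,w_\ld)\,w_\ld\,\dd x = \frac{w_\ld(a_i)}{\sigma_\ld}\,(-\log\tau_\ld) + O(\tau_\ld/\sigma_\ld) ~. \]
Corollary~\ref{cor:derivative-and-quotient-in-lambda} only gives \(\tau_\ld\le\ld\,\sigma_\ld\). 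So this is bounded by \(\ld(-\log\tau_\ld)+O(\ld)\to\mu\), which is the same order as the homogeneous contribution \(\ld(-\log\tau_\ld)V_\ld(a_i)\to\mu\,\eta(a_i)\), not \(o(1)\). Your bound \(\ld(-\log\tau_\ld)\cdot\ld\) would require \(\tau_\ld\lesssim\ld^2\sigma_\ld\), and nothing in your argument supplies that extra factor of \(\ld\). Without it, the limiting node relations read \(A\vec\eta=\mu\,\vec\eta+\kappa\,\vec w\), where \(\vec\eta=(\eta(a_2),\dots,\eta(a_{n-1}))\), \(\vec w=(\mr{w}(a_2),\dots,\mr{w}(a_{n-1}))\), and \(\kappa=\lim\tau_\ld|\log\tau_\ld|/\sigma_\ld\in[0,\mu]\) is unknown. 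Hence \(\eta=\mr{w}\) does not follow.

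The missing input is \(\tau_\ld|\log\tau_\ld|=o(\sigma_\ld)\). The paper gets it in Step~2b from the Wronskian identity \eqref{eqn:barrier-integration-C0}, obtained by integrating \(\bigl((\pl_\ld w_\ld)'w_\ld-\pl_\ld w_\ld\,w_\ld'\bigr)'\) over \([-1,1]\). Its left side is asymptotic to \(\tau_\ld^2|\log\tau_\ld|\sum_i\mr{w}(a_i)^2\), and its right side to \(\ld\,\tau_\ld\sigma_\ld\sum_i\mr{w}(a_i)\eta(a_i)\) (via \eqref{integration.lemma3}). So \(\tau_\ld|\log\tau_\ld|/(\ld\sigma_\ld)\) stays bounded, which is \eqref{eqn:asymptotic-C0}, and the inhomogeneous contribution is \(O(\ld)\).

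Within your own framework there is a shorter fix. Keep \(\kappa\) and pair \(A\vec\eta=\mu\vec\eta+\kappa\vec w\) with \(\vec w\). Since \(A\) is symmetric and \(A\vec w=\mu\vec w\), this forces \(\kappa|\vec w|^2=0\), so \(\kappa=0\).

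There is also a smaller omission. Your compactness step gives \(C^1\) convergence only away from all of \(a_1,\dots,a_n\). The statement, and your ratio argument near \(\pm1\), need it up to \(\pm1\). This does hold: \(w_\ld(x)\le\|w_\ld\|_{C^1}(1+x)\) together with Lemma~\ref{lem:bound-C1-by-CO} makes \(\phixaxis(x,w_\ld)\,w_\ld/\tau_\ld\) uniformly bounded near \(-1\), and similarly near \(1\). The same works for \(V_\ld\), using \(w_\ld\le\ld\,\pl_\ld w_\ld\) and the \(C^1\) bound on \(V_\ld\). So the second derivatives still tend to \(0\) there.
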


\begin{proof}
{\textit{Step 1a: convergence of \(w_\ld\)}}.
Let
\begin{align*}
    \td{w}_\ld(x) = \frac{w_\ld(x)}{\|w_\ld(\,\cdot\,)\|_{C^0([-1,1])}} ~,
\end{align*}
which satisfies
\begin{align} \label{eqn:normalized-barrier}
    - \td{w}''_\ld(x) = \ld\phixaxis(x,w_\ld(x))\td{w}_\ld(x) ~.
\end{align}
Note that on any compact subset of \([-1,1]\setminus\{a_1,\cdots,a_n\}\), \(\phixaxis(x,w_\ld(x))\) is uniformly bounded.

Given a sequence \(\ld_j\to0\), we would like to show that there exists a subsequence \(\{\ld_{j_k}\}\) so that \(\td{w}_{\ld_{j_k}}(x) \to \mr{w}(x)\) in \(C^0([-1,1])\).  For brevity, we will omit the index of the sequence.  By construction, \(0\leq\td{w}_{\ld}\leq 1\); by Lemma~\ref{lem:bound-C1-by-CO}, \(\|\td{w}'_{\ld}\|_{C^0}\leq c\).  It follows from the Arzel\`a--Ascoli theorem that \(\td{w}_{\ld}(x)\to \xi(x)\) in \(C^0([-1,1])\).

To upgrade it to \(C^1\) convergence on any compact subset of \([-1,1]\setminus\{a_2,\cdots,a_{n-1}\}\), it suffices to show that \eqref{eqn:normalized-barrier} is uniformly bounded away from \(\{a_2,\cdots,a_{n-1}\}\).  This is obvious except near \(a_1 = -1\) and \(a_n = 1\).  Near \(a_1 = -1\), note that
\begin{align} \label{eqn:upgrade-near-pm1}
    \Bigl| \frac{1}{\sqrt{(x+1)^2+(w_\ld(x))^2}}\frac{w_\ld(x)}{\|w_\ld(\,\cdot\,)\|_{C^0}} \Bigr| &\leq \frac{1}{\|w_\ld(\,\cdot\,)\|_{C^0}}\cdot{\Bigl|\frac{w_\ld(x)}{x+1}\Bigr|} \leq \frac{\|w_\ld(\,\cdot\,)\|_{C^1}}{\|w_\ld(\,\cdot\,)\|_{C^0}} ~,
\end{align}
which is uniformly bounded by Lemma~\ref{lem:bound-C1-by-CO}.  It therefore remains to prove that $\xi = \mr{w}$.

{\textit{Step 1b: property of the limit}}.
Indeed, the above computation implies that \(\phixaxis(x,w_{\ld}(x))\,\td{w}_{\ld}(x)\) is uniformly bounded on any compact subset of \([-1,1]\setminus\{a_2,\cdots,a_{n-1}\}\), and thus \(\td{w}_{\ld}''\to0\) away from \(\{a_2,\cdots,a_{n-1}\}\).  By considering the limit of \((\td{w}'_{\ld}(x_2) - \td{w}'_{\ld}(x_1))(x_3-x_2) - (\td{w}'_{\ld}(x_3) - \td{w}'_{\ld}(x_2))(x_2-x_1)\), one finds that \(\xi\) is linear on \([a_i,a_{i+1}]\) for \(i=1,\ldots,n-1\).

Due to the \(C^0\) convergence, \(\xi\) is concave, \(\xi(-1) = 0 = \xi(1)\), and \(\max_{-1\leq x\leq 1} \xi(x) = 1\).  As a consequence, \(\xi(x) > 0\) for \(-1<x<1\), and \(\max_{2\leq i\leq n-1}\xi(a_i) = 1\).

{\textit{Step 1c: determining the limit}}.
When \(n=3\), this already implies that \(\xi = \mr{w}\).   When \(n>3\), fix any \(\dt>0\), and integrating \eqref{eqn:normalized-barrier} gives
\begin{align} \label{eqn:slope-limit-0}
    \frac{\td{w}'_{\ld}(a_i-\dt) - \td{w}'_{\ld}(a_i+\dt)}{\td{w}'_{\ld}(a_{i+1}-\dt) - \td{w}'_{\ld}(a_{i+1}+\dt)}
    &= \frac{\int_{a_i-\dt}^{a_{i}+\dt}\phixaxis(x,w_{\ld}(x))w_{\ld}(x)\dd x}{\int_{a_{i+1}-\dt}^{a_{i+1}+\dt}\phixaxis(x,w_{\ld}(x))w_{\ld}(x)\dd x}
\end{align}
for \(i=2,\ldots,n-2\).  As \(\ld\to0\), the left-hand side of \eqref{eqn:slope-limit-0} converges to
\begin{align} \label{eqn:slope-limit-0l}
    \frac{\xi'(a_i-\dt) - \xi'(a_i+\dt)}{\xi'(a_{i+1}-\dt) - \xi'(a_{i+1}+\dt)} &=
    \frac{\frac{\xi(a_i)-\xi(a_{i-1})}{a_i - a_{i-1}} - \frac{\xi(a_{i+1})-\xi(a_{i})}{a_{i+1} - a_{i}}}{\frac{\xi(a_{i+1})-\xi(a_{i})}{a_{i+1} - a_{i}} - \frac{\xi(a_{i+2})-\xi(a_{i+1})}{a_{i+2} - a_{i+1}}} ~.
\end{align}
For the right-hand side of \eqref{eqn:slope-limit-0}, choose \(\dt\) small enough so that \(\xi(a_i+s)\geq\frac{3}{4}\xi(a_i)\) whenever \(|s|\leq\dt\) and for \(i=2,\ldots,n-1\).  Since \(\td{w}_\ld\to \xi\) uniformly, for \(\ld<\!<1\),
\begin{align*}
    \frac{1}{2} \xi(a_i )\leq \td{w}_\ld(a_i+s) \leq 2\xi(a_i) \quad\text{whenever} |s|\leq\dt ~.
\end{align*}
Equivalently, we have
\begin{align} \label{eqn:slope-limit-sandwich}
    \frac{\xi(a_i)}{2}\cdot\|w_\ld\|_{C^0} \leq w_\ld(a_i+s) \leq 2\xi(a_i)\cdot\|w_\ld\|_{C^0} \quad\text{whenever} |s|\leq\dt ~.
\end{align}
We shall use the big \(O\) and little \(o\) notation for \(\ld\to0\).  According to \eqref{eqn:barrier-C0-estimate} and Lemma~\ref{lem:bound-C1-by-CO}, \(\|w_\ld\|_{C^0} = o(1)\), and
\begin{align*}
    |w_\ld'(x)| &\leq c\cdot\|w_\ld(\,\cdot\,)\|_{C^0} ~.
\end{align*}
By \eqref{integration.lemma1} of Lemma~\ref{integration.lemma} for \(\kp_0 = \frac{2}{\xi(a_i)}\), \(\kp_1 = c\) and \(\tau = \|w_\ld\|_{C^0}\),
\begin{align*}
    \int_{a_i-\dt}^{a_i+\dt} \frac{\dd x}{2\sqrt{(x-a_i)^2 + (w_\ld(x))^2}} &= - \log(w_\ld(a_i)) + O(1) \\
    &\stackrel{\eqref{eqn:slope-limit-sandwich}}{=} -\log\|w_\ld\|_{C^0} + O(1)
\end{align*}
for \(i=2,\ldots,n-1\).  After some simple manipulation,
\begin{align} \label{eqn:phi-w-asymptotic}
    \int_{a_i-\dt}^{a_i+\dt}\phixaxis(x,w_\ld(x))w_\ld(x)\dd x = -w_\ld(a_i)\log\|w_\ld\|_{C^0} + O(\|w_\ld\|_{C^0}) ~,
\end{align}
and thus the right hand side converges to \(\frac{\xi(a_i)}{\xi(a_{i+1})}\) as \(\ld\to0\).  To sum up, the limit of \eqref{eqn:slope-limit-0} as \(\ld\to0\) gives
\begin{align*}
    \frac{\frac{\xi(a_i)-\xi(a_{i-1})}{a_i - a_{i-1}} - \frac{\xi(a_{i+1})-\xi(a_{i})}{a_{i+1} - a_{i}}}{\frac{\xi(a_{i+1})-\xi(a_{i})}{a_{i+1} - a_{i}} - \frac{\xi(a_{i+2})-\xi(a_{i+1})}{a_{i+2} - a_{i+1}}} &= \frac{\xi(a_i)}{\xi(a_{i+1})}
\end{align*}
for \(i=2,\ldots,n-1\).  One infers that \(\bigl(\xi(a_2),\cdots,\xi(a_{n-1})\bigr)\) is an eigenvector of \(A\).  Due to Lemma~\ref{lem:leading-eigenvector}, Definition~\ref{defn:eigenvalue-leading-linear-function} and \(\max_{2\leq i\leq n-1}\xi(a_i) = 1\), \(\xi(a_i) = \mr{w}(a_i)\) for \(i=2,\ldots,n-1\), and thus \(\xi\equiv \mr{w}\).

{\textit{Step 2a: convergence of \(\pl_\ld w_\ld\)}}.
According to Lemma~\ref{lem:bound-C1-by-CO}, \(\frac{1}{\|\pl_\ld w_\ld(\,\cdot\,)\|_{C^0}}{|\pl_\ld w_\ld'(x)|} \leq c\) for \(-1\leq x\leq 1\).  The Arzel\`a--Ascoli theorem implies that
\begin{align} \label{eqn:convergence-barrier-derivative-parameter}
    \frac{1}{\|\pl_\ld w_\ld(\,\cdot\,)\|_{C^0}}{\pl_\ld w_\ld(x)} \to \eta(x)
    \quad\text{in } C^0([-1,1]) \text{ as }\ld\to0 ~.
\end{align}
It follows from the properties of \(\pl_\ld w_\ld\) and the \(C^0\)-convergence that \(\eta(-1) = 0 = \eta(1)\), \(\eta\) is concave and positive on \((-1,1)\), and \(\max_{-1\leq x\leq 1}\eta(x) = 1\) .

By \eqref{eqn:derivative-PDE}, Corollary~\ref{cor:derivative-and-quotient-in-lambda}, and the inequality \(\msL_{\ld,0}<\phixaxis(x,w_\ld(x))\),
\begin{align*}
    0\leq -\Bigl(\frac{\pl_\ld w_\ld}{\|\pl_\ld w_\ld\|_{C^0}}\Bigr)'' \leq 2\ld \phixaxis(x,w_\ld(x)) \frac{\pl_\ld w_\ld}{\|\pl_\ld w_\ld\|_{C^0}} ~.
\end{align*}
Since \(\phixaxis(x,w_\ld(x))\) is uniformly bounded on any compact subset of \([-1,1]\setminus\{a_1,\cdots,a_n\}\), the same argument as in Step 1a shows that the convergence \eqref{eqn:convergence-barrier-derivative-parameter} is \(C^1\) on any compact subset of \([-1,1]\setminus\{a_2,\cdots,a_{n-1}\}\) and \(\eta\) is linear on \([a_i,a_{i+1}]\) for \(i=0,\ldots,n-1\), and \(\max_{2\leq i\leq n-1}\eta(a_i) = 1\).

{\textit{Step 2b: asymptotics of \(C^0\) norms}}.  Before proving \(\eta \equiv \mr{w}\), we need to compare the asymptotic behavior of \(\|w_\ld\|_{C^0}\) and \(\|\pl_\ld w_\ld\|_{C^0}\).  By \eqref{eqn:barrier} and \eqref{eqn:derivative-PDE},
\begin{align*}
    0 &= \int_{-1}^{1} \bigl( (\pl_\ld w_\ld)'(w_\ld) - (\pl_\ld w_\ld)(w_\ld)' \bigl)' \dd x\\
    &= \int_{-1}^1 -\bigl( \ld(\msL_{\ld,0})(\pl_\ld w_\ld) + \phixaxis(x,w_\ld) w_\ld\bigr) (w_\ld) + (\pl_\ld w_\ld)\bigl(\ld\phixaxis(x,w_\ld)w_\ld\bigr) \dd x  ~.
\end{align*}
It follows that
\begin{align} \label{eqn:barrier-integration-C0}
    \int_{-1}^1\phixaxis(x,w_\ld)\cdot(w_\ld)^2 \dd x &= \ld\int_{-1}^1 \Bigl(\phixaxis(x,w_\ld) - \msL_{\ld,0}\Bigr) (w_\ld) (\pl_\ld w_\ld)\dd x ~.
\end{align}
We will analyze its asymptotic behavior as \(\ld\to 0\).

For the left-hand side of \eqref{eqn:barrier-integration-C0}, it follows from an argument similar to that for \eqref{eqn:phi-w-asymptotic} that
\begin{align*} 
    \int_{a_i-\dt}^{a_i+\dt}\phixaxis(x,w_\ld)\cdot(w_\ld)^2\dd x = -(w_\ld(a_i))^2\log\|w_\ld\|_{C^0} + O(\|w_\ld\|^2_{C^0})
\end{align*}
for \(i=2,\ldots,n-1\).  Since \(\phixaxis(x,w_\ld)\) is uniformly bounded away from \(a_1,\cdots,a_n\),
\begin{align*} 
    \int_{a_i+\dt}^{a_{i+1}-\dt}\phixaxis(x,w_\ld)\cdot(w_\ld)^2\dd x = O(\|w_\ld\|^2_{C^0})
\end{align*}
for \(i=1,\ldots,n-1\).  Near \(a_1 = -1\) (and \(a_n=1\)), it follows from Lemma~\ref{lem:bound-C1-by-CO} that
\begin{align*}
    \int_{-1}^{-1+\dt}\frac{(w_\ld(x))^2}{\sqrt{(x+1)^2+(w_\ld(x))^2}} \dd x \lesssim \|w_\ld\|^2_{C^0} \int_{-1}^{-1+\dt} (x+1)\dd x = O(\|w_\ld\|^2_{C^0}) ~.
\end{align*}
Hence,
\begin{align*}
    \int_{-1}^1\phixaxis(x,w_\ld)\cdot(w_\ld)^2 \dd x &=  -\log\|w_\ld\|_{C^0}\sum_{i=1}^n (w_\ld(a_i))^2 + O(\|w_\ld\|^2_{C^0}) ~.
\end{align*}
By using \(\frac{1}{\|w_\ld\|_{C^0}}w_\ld(a_i) = \mr{w}(a_i) + o(1)\), one finds that
\begin{align} \label{eqn:barrier-integration-C0-l}
    \lim_{\ld\to0} \frac{\int_{-1}^1\phixaxis(x,w_\ld)\cdot(w_\ld)^2 \dd x}{-\|w_\ld\|_{C^0}^2 \log\|w_\ld\|_{C^0}} &= \sum_{i=1}^n(\mr{w}(a_i))^2
\end{align}

For the right-hand side of \eqref{eqn:barrier-integration-C0}, we compute
\begin{align*}
    \phixaxis(x,w_\ld) - {\msL_{\ld,0}} &= \sum_{i=1}^n \frac{(w_\ld(x))^2}{2\bigl((x-a_i)^2 + (w_\ld(x))^2\bigr)^{\frac{3}2}} ~.
\end{align*}
It follows from \eqref{integration.lemma3} of Lemma~\ref{integration.lemma} that
\begin{align*}
    \int_{a_i-\dt}^{a_i+\dt} \sum_{j=1}^n \frac{1}{2\bigl((x-a_j)^2 + (w_\ld(x))^2\bigr)^{\frac{3}2}}\dd x = \frac{1}{(w_\ld(a_i))^2} + O(\frac{1}{\|w_\ld\|_{C^0}})
\end{align*}
for \(i=2,\ldots,n-1\).  Together with the mean value theorem and Lemma~\ref{lem:bound-C1-by-CO}, this implies that
\begin{align*}
    \int_{a_i-\dt}^{a_i+\dt} \Bigl(\phixaxis(x,w_\ld) - {\msL_{\ld,0}}\Bigr)(w_\ld)(\pl_\ld w_\ld)\dd x &= (w_\ld(a_i))(\pl_\ld w_\ld(a_i)) + O(\|w_\ld\|^2_{C^0} \|\pl_\ld w_\ld\|_{C^0})
\end{align*}
for \(i=2,\ldots,n-1\).  It is easy to see that
\begin{align*}
    \int_{a_i+\dt}^{a_{i+1}-\dt} \Bigl(\phixaxis(x,w_\ld) - {\msL_{\ld,0}}\Bigr)(w_\ld)(\pl_\ld w_\ld)\dd x &= O(\|w_\ld\|^3_{C^0} \|\pl_\ld w_\ld\|_{C^0})
\end{align*}
for \(i=1,\ldots,n-1\).  Near \(a_1 = -1\) (and \(a_n=1\)), it follows from Lemma~\ref{lem:bound-C1-by-CO} that
\begin{align*}
    &\quad \int_{-1}^{-1+\dt}\frac{(w_\ld(x))^2}{\bigl((x+1)^2+(w_\ld(x))^2\bigr)^{\frac{3}2}}(w_\ld(x))(\pl_\ld w_\ld(x)) \dd x \\
    &\lesssim \|w_\ld\|^3_{C^0}\|\pl_\ld w_\ld\|_{C^0} \int_{-1}^{-1+\dt} (x+1)\dd x = O(\|w_\ld\|^3_{C^0} \|\pl_\ld w_\ld\|_{C^0}) ~.
\end{align*}
Thus,
\begin{align*}
    \int_{-1}^1 \Bigl(\phixaxis(x,w_\ld) - \msL_{\ld,0}\Bigr) (w_\ld)(\pl_\ld w_\ld)\dd x
    &= \sum_{i=1}^n(w_\ld(a_i))(\pl_\ld w_\ld(a_i)) + O(\|w_\ld\|^2_{C^0} \|\pl_\ld w_\ld\|_{C^0}) ~.
\end{align*}
By using \(\frac{1}{\|w_\ld\|_{C^0}}w_\ld(a_i) = \mr{w}(a_i) + o(1)\) and \(\frac{1}{\|\pl_\ld w_\ld\|_{C^0}}\pl_\ld w_\ld(a_i) = \eta(a_i) + o(1)\),
\begin{align} \label{eqn:barrier-integration-C0-r}
    \lim_{\ld\to0} \frac{\int_{-1}^1 \Bigl(\phixaxis(x,w_\ld) - \msL_{\ld,0}\Bigr) (w_\ld)(\pl_\ld w_\ld)\dd x}{\|w_\ld\|_{C^0} \|\pl_\ld w_\ld\|_{C^0}} &= \sum_{i=1}^n\mr{w}(a_i)\eta(a_i) ~.
\end{align}

Putting \eqref{eqn:barrier-integration-C0}, \eqref{eqn:barrier-integration-C0-l} and \eqref{eqn:barrier-integration-C0-r} together gives
\begin{align} \label{eqn:asymptotic-C0}
    \lim_{\ld\to0} \frac{\|w_\ld\|_{C^0} \cdot (-\log\|w_\ld\|_{C^0})}{\ld\cdot\|\pl_\ld w_\ld\|_{C^0}} &= \frac{\sum_{i=1}^n\mr{w}(a_i)\eta(a_i)}{\sum_{i=1}^n(\mr{w}(a_i))^2} ~.
\end{align}

{\textit{Step 2c: determining the limit}}.
It remains to show that \(\eta \equiv \mr{w}\) when \(n>3\).  By the same argument as in step 1c,
\begin{align} \label{eqn:eta-slope-limit}
    \frac{\frac{\eta(a_i)-\eta(a_{i-1})}{a_i - a_{i-1}} - \frac{\eta(a_{i+1})-\eta(a_{i})}{a_{i+1} - a_{i}}}{\frac{\eta(a_{i+1})-\eta(a_{i})}{a_{i+1} - a_{i}} - \frac{\eta(a_{i+2})-\eta(a_{i+1})}{a_{i+2} - a_{i+1}}}
    &= \lim_{\ld \to 0} \frac{\int_{a_i-\dt}^{a_i+\dt}\bigl(\ld(\msL_{\ld,0})(\pl_\ld w_\ld) + \phixaxis(x,w_\ld) w_\ld\bigr) \dd x}{\int_{a_{i+1}-\dt}^{a_{i+1}+\dt}\bigl(\ld(\msL_{\ld,0})(\pl_\ld w_\ld) + \phixaxis(x,w_\ld) w_\ld\bigr) \dd x}
\end{align}
for \(i=2,\ldots,n-2\).

It follows from \eqref{integration.lemma2} of Lemma~\ref{integration.lemma} that
\begin{align*}
    \int_{a_i-\dt}^{a_i+\dt}\ld\msL_{\ld,0}\,\dd x &= \ld\Bigl( \int_{a_i-\dt}^{a_i+\dt}\frac{(x-a_i)^2}{2\bigl({(x-a_i)^2+(w_\ld)^2\bigr)^{\frac{3}2}}} \dd x + O(1) \Bigr) \\
    &= \ld\bigl( -\log w_\ld(a_i) + O(1) \bigr) \\
    &\stackrel{\eqref{eqn:slope-limit-sandwich}}{=} \ld\bigl( -\log\|w_\ld\|_{C^0} + O(1) \bigr) ~.
\end{align*}
With the help of Lemma~\ref{lem:bound-C1-by-CO}, it implies that
\begin{align*}
    \int_{a_i-\dt}^{a_i+\dt}\ld(\msL_{\ld,0})(\pl_\ld w_\ld)\,\dd x &= \ld\bigl( -(\pl_\ld w_\ld(a_i))\cdot\log\|w_\ld\|_{C^0} + O(\|\pl_\ld w_\ld\|_{C^0}) \bigr) ~.
\end{align*}
Together with \eqref{eqn:phi-w-asymptotic},
\begin{align*}
    &\quad {\int_{a_i-\dt}^{a_i+\dt}\bigl(\ld(\msL_{\ld,0})(\pl_\ld w_\ld) + \phixaxis(x,w_\ld) w_\ld\bigr) \dd x} \\
    &= (-\log\|w_\ld\|_{C^0}) \bigl( \ld\cdot\pl_\ld w_\ld(a_i) + w_\ld(a_i) \bigr) + O(\ld\cdot\|\pl_\ld w_\ld\|_{C^0}) ~.
\end{align*}
It follows that
\begin{align*}
    &\quad\lim_{\ld\to0}\frac{{\int_{a_i-\dt}^{a_i+\dt}\bigl(\ld(\msL_{\ld,0})(\pl_\ld w_\ld) + \phixaxis(x,w_\ld) w_\ld\bigr) \dd x}}{\ld\cdot(-\log\|w_\ld\|_{C^0})\cdot\|\pl_\ld w_\ld\|_{C^0}} \\
    &= \lim_{\ld\to 0} \Bigl( \frac{\pl_\ld w_\ld(a_i)}{\|\pl_\ld w_\ld\|_{C^0}} + \frac{w_\ld(a_i)}{\ld\cdot\|\pl_\ld w_\ld\|_{C^0}} \Bigr) \\
    &= \eta(a_i) + \lim_{\ld\to 0}\frac{w_\ld(a_i)}{\|w_\ld\|_{C^0}} \frac{\|w_\ld\|_{C^0} \cdot (-\log\|w_\ld\|_{C^0})}{\ld\cdot\|\pl_\ld w_\ld\|_{C^0}}\frac{1}{(-\log\|w_\ld\|_{C^0})} ~,
\end{align*}
and the last limit vanishes by \eqref{eqn:asymptotic-C0} and \(\|w_\ld\|_{C^0}\to0\).  By the same argument as in step 1c, \(\eta\equiv\mr{w}\).
\end{proof}

After showing \(\eta\equiv\mr{w}\), the limit \eqref{eqn:eta-slope-limit} is equal to \(1\).  The following lemma is a refined version of this limit.
\begin{lem} \label{lem:C0-asymptotic}
The \(C^0\)-norm of \(w_\ld(x)\) satisfies
\begin{align*}
    \lim_{\ld\to 0}\ld\cdot(-\log\|w_\ld(\,\cdot\,)\|_{C^0([-1,1])}) = \mu ~,
\end{align*}
the positive eigenvalue given by Lemma~\ref{lem:leading-eigenvector} (see Definition~\ref{defn:eigenvalue-leading-linear-function}).
\end{lem}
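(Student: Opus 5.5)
We will integrate \eqref{eqn:barrier} across each singular point and compare with the eigenvalue equation of Definition~\ref{defn:eigenvalue-leading-linear-function}. Write \(\tau = \|w_\ld\|_{C^0}\) and \(\td{w}_\ld = w_\ld/\tau\), and fix a small \(\dt>0\) as in Step~1c of the proof of Proposition~\ref{limit.model}. For each \(i=2,\ldots,n-1\), integrating \eqref{eqn:normalized-barrier} over \([a_i-\dt,a_i+\dt]\) gives
\begin{align*}
    \td{w}'_\ld(a_i-\dt) - \td{w}'_\ld(a_i+\dt) = \frac{\ld}{\tau}\int_{a_i-\dt}^{a_i+\dt}\phixaxis(x,w_\ld(x))\,w_\ld(x)\,\dd x ~.
\end{align*}
By \eqref{eqn:phi-w-asymptotic}, the right-hand side equals \(\ld\bigl(-\td{w}_\ld(a_i)\log\tau + O(1)\bigr)\). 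The \(\mass\)-term and the terms \(j\neq i\) in \(\phixaxis\) are bounded near \(a_i\), so they contribute only \(O(\ld)\).

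Now let \(\ld\to0\). By Proposition~\ref{limit.model}, \(\td{w}_\ld\to\mr{w}\) in \(C^1\) away from \(\{a_2,\ldots,a_{n-1}\}\). Hence the left-hand side converges to
\begin{align*}
    \mr{w}'(a_i^-) - \mr{w}'(a_i^+) = \mu\,\mr{w}(a_i)
\end{align*}
by the eigen-equation. Here we use that \(\mr{w}\) is linear on each segment, so \(\mr{w}'(a_i\pm\dt)\) equals the one-sided slopes exactly. Also \(\td{w}_\ld(a_i)\to\mr{w}(a_i)>0\). Choose any index \(i\), for instance one with \(\mr{w}(a_i)=1\). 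Dividing the two sides, we get \(\ld(-\log\tau)\,\mr{w}(a_i) + O(\ld) \to \mu\,\mr{w}(a_i)\). Since \(\ld\to0\), the \(O(\ld)\) term disappears, and \(\ld(-\log\tau)\to\mu\).

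Alternatively, we could sum over \(i\) and integrate over all of \([-1,1]\), using \eqref{formula.for.Ld} and the end-point bounds near \(\pm1\) obtained via Lemma~\ref{lem:bound-C1-by-CO}. However, the single-point version avoids the endpoints altogether.

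The main obstacle is making sure the error in \eqref{eqn:phi-w-asymptotic} is uniformly \(O(\tau)\), so that after dividing by \(\tau\) it becomes \(O(1)\) and is killed by the factor \(\ld\). This requires the hypotheses of Lemma~\ref{integration.lemma} with \(f(s)=w_\ld(a_i\pm s)\):
\begin{itemize}
    \item the two-sided bound \(f\sim\tau\), supplied by \eqref{eqn:slope-limit-sandwich};
    \item the derivative bound \(|f'|\lesssim\tau\), supplied by Lemma~\ref{lem:bound-C1-by-CO}.
\end{itemize}
We also need to replace \(\log w_\ld(a_i)\) by \(\log\tau\) at a cost of \(O(1)\), which \eqref{eqn:slope-limit-sandwich} again gives. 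One should additionally check that the result is consistent with \(\mu>0\). This holds because \(A\) is positive definite (it is the stiffness matrix of a discrete Dirichlet Laplacian), and it forces \(\tau\to0\) exponentially, matching the bound \(\tau\le n\ld/2\).
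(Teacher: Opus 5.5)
Your argument is correct and is essentially the paper's. Both integrate \eqref{eqn:barrier}, evaluate the potential term through \eqref{eqn:phi-w-asymptotic}, and pass to the limit using the $C^1$ convergence of Proposition~\ref{limit.model} together with the eigen-relation for $\mr{w}$. The only difference is that the paper integrates over all of $[-1,1]$ and uses the summed relation \eqref{formula.for.Ld}, which requires the extra estimate near $\pm1$ and $C^1$ convergence up to $\pm1$; you instead localize to a single interior $a_i$, as in \eqref{eqn:slope-limit-0}, and use one row of the eigen-equation, which avoids the endpoints.
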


\begin{proof}
It follows from \eqref{eqn:barrier} that
\begin{align*}
    \frac{w_\ld'(-1) - w_\ld'(1)}{\ld} &= \int_{-1}^1\phixaxis(x,w_\ld)w_\ld\,\dd x ~.
\end{align*}
By \eqref{eqn:phi-w-asymptotic} and a direct argument near \(\pm1\),
\begin{align*}
    \int_{-1}^1\phixaxis(x,w_\ld)w_\ld\,\dd x &= (-\log\|w_\ld\|_{C^0})\sum_{i=1}^n w_\ld(a_i) + O(\|w_\ld\|_{C^0}) ~.
\end{align*}
According to \eqref{formula.for.Ld} and Proposition~\ref{limit.model},
\begin{align*}
    \mu\,\sum_{i=1}^n\mr{w}(a_i) &= \mr{w}'(-1) - \mr{w}'(1) \\
    &= \lim_{\ld\to 0} \frac{w_\ld'(-1) - w_\ld'(1)}{\|w_\ld\|_{C^0}} \\
    &= \lim_{\ld\to 0} \Bigl( -(\ld\cdot\log\|w_\ld(\,\cdot\,)\|_{C^0([-1,1])})\frac{\sum_{i=1}^n w_\ld(a_i)}{\|w_\ld\|_{C^0}} + O(\ld)\Bigr) ~,
\end{align*}
and this lemma follows.
\end{proof}

As a consequence of this lemma and \eqref{eqn:asymptotic-C0},
\begin{align} \label{eqn:asymptotic-C0-refined}
    \lim_{\ld\to0} \frac{\mu}{\ld^2}\cdot\frac{\|w_\ld\|_{C^0}}{\|\pl_\ld w_\ld\|_{C^0}} &= 1 ~.
\end{align}

\section{Proof of the Main Result} \label{sec:main-argument}

\begin{prop} \label{prop:sup-sub-solution}
    Fix any \(\mass\geq0\).  Consider the functions \(\{w_\ld(x)\}_{0<\ld<\frac{1}{\mass+1}}\) given by Theorem~\ref{thm:barrier-summary}, and let \(\mu>0\) be the constant introduced in Definition~\ref{defn:eigenvalue-leading-linear-function}.  Let
    \begin{align*}
        \br{c} = \sup_{\Om\cap(\BR^2\times\{0\})}\frac{\phi(x,y,0)}{\phixaxis(x,y)} \quad\text{and}\quad \ul{c} = \inf_{\Om\cap(\BR^2\times\{0\})}\frac{\phi(x,y,0)}{\phixaxis(x,y)} ~.
    \end{align*}

    For any \(\ep\in(0,1)\), let
    \begin{align}
        \br{\ld}_{\ep}(t) = \sqrt{\frac{\br{c}\cdot\mu}{2(1-\ep)t}} \quad\text{and}\quad \ul{\ld}_{\ep}(t) = \sqrt{\frac{\ul{c}\cdot\mu}{2(1+\ep)t}} ~.
    \end{align}
    Then, there exists \(t_0>0\) such that \(\bar{w}(x,t;\ep) := w_{\br{\ld}_{\ep}(t)}(x)\) is a supersolution to \eqref{eqn:LMCF-graphical} and \(\ul{w}(x,t;\ep) := w_{\ul{\ld}_{\ep}(t)}(x)\) is a subsolution to \eqref{eqn:LMCF-graphical} whenever \(t\geq t_0\).
\end{prop}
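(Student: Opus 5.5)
We plan to verify the differential inequality directly. For \(\bar w(x,t)=w_{\ld(t)}(x)\) with \(\ld=\br\ld_\ep(t)\), the chain rule (Lemma~\ref{lem:differentiable-in-lambda}) gives \(\pl_t\bar w=\dot\ld\,\pl_\ld w_\ld\). Here \(\dot\ld=-\frac{\ld}{2t}=-\frac{(1-\ep)\ld^3}{\br c\mu}\), since \(\ld^2=\frac{\br c\mu}{2(1-\ep)t}\). By \eqref{eqn:barrier}, the right-hand side of \eqref{eqn:LMCF-graphical} equals \(-\frac{\ld\phixaxis(x,w_\ld)w_\ld}{\phi(x,w_\ld,0)(1+(w_\ld')^2)}\). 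So the supersolution condition \(\pl_t\bar w\ge\frac{1}{\phi}\frac{\bar w''}{1+(\bar w')^2}\) is equivalent to
\begin{align*}
    \frac{(1-\ep)\ld^2}{\br c\,\mu}\,\pl_\ld w_\ld \leq \frac{\phixaxis(x,w_\ld)}{\phi(x,w_\ld,0)}\cdot\frac{w_\ld}{1+(w_\ld')^2} ~.
\end{align*}
By the definition of \(\br c\), the right-hand side is at least \(\frac{1}{\br c}\frac{w_\ld}{1+(w_\ld')^2}\). It therefore suffices to prove
\begin{align*}
    (1-\ep)\,\frac{\ld^2}{\mu}\,\pl_\ld w_\ld(x)\leq \frac{w_\ld(x)}{1+(w'_\ld(x))^2} \quad\text{for all }x\in(-1,1) ~,
\end{align*}
once \(\ld\) is small, that is, once \(t\ge t_0\).

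The subsolution case is handled the same way, using \(\ul c\) and \(1+\ep\). There the condition reduces to
\begin{align*}
    (1+\ep)\frac{\ld^2}{\mu}\pl_\ld w_\ld\ge \frac{w_\ld}{1+(w_\ld')^2} ~,
\end{align*}
and it suffices to prove \((1+\ep)\frac{\ld^2}{\mu}\pl_\ld w_\ld\ge w_\ld\).

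Both inequalities come from the asymptotics of Section~\ref{sec:asymptotic}. First, Proposition~\ref{limit.model} gives uniform ratio convergence on \([-1,1]\):
\begin{align*}
    \frac{\pl_\ld w_\ld(x)}{w_\ld(x)}=\frac{\|\pl_\ld w_\ld\|_{C^0}}{\|w_\ld\|_{C^0}}\,(1+o(1)) ~.
\end{align*}
This holds uniformly in \(x\), including near \(\pm1\), because the convergence is stated in ratio form with \(\mr w(x)\) in the denominator. Second, \eqref{eqn:asymptotic-C0-refined} says \(\frac{\ld^2}{\mu}\frac{\|\pl_\ld w_\ld\|_{C^0}}{\|w_\ld\|_{C^0}}\to1\). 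Combining the two, \(\frac{\ld^2}{\mu}\frac{\pl_\ld w_\ld}{w_\ld}\to1\) uniformly in \(x\) as \(\ld\to0\). Third, Lemma~\ref{lem:bound-C1-by-CO} with \(\|w_\ld\|_{C^0}\le\frac{n\ld}{2}\) gives \(\|w'_\ld\|_{C^0}=O(\ld)\), so \(1+(w'_\ld)^2=1+O(\ld^2)\). Hence for \(\ld\) small, the left-hand side of the supersolution inequality is at most \((1-\ep)(1+o(1))w_\ld\), which is at most \(\frac{w_\ld}{1+O(\ld^2)}\). Since \(\ld(t)\to0\) as \(t\to\infty\), this yields \(t_0\). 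The inequality for the subsolution follows identically. We also need \(\ld(t)<\frac{1}{1+\mass}\), which holds for large \(t\).

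The main point of care is that the inequality must hold uniformly in \(x\), including near \(\pm1\) and near the points \(a_i\), and must be sharp up to the factor \(1\pm\ep\). The uniform ratio statements of Proposition~\ref{limit.model} and \eqref{eqn:asymptotic-C0-refined} carry all of the difficulty, and the rest is bookkeeping. We must also check that \(\phi(x,w_\ld,0)\) is evaluated on the curve \(y=w_\ld\le\frac{n\ld}{2}\), which lies in \(\Om\) for small \(\ld\). Finally, the ratio \(\phi/\phixaxis\) is bounded between \(\ul c>0\) and \(\br c<\infty\) because \(\phireg\) is bounded and nonnegative, so \(\br c\) and \(\ul c\) are finite and positive.
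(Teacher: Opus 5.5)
Your proposal is correct and follows essentially the same route as the paper. The paper also uses the chain rule and \eqref{eqn:barrier}, factors out \(\pl_\ld w_\ld\), and reduces everything to the uniform convergence \(\frac{\ld^2}{\mu}\frac{\pl_\ld w_\ld}{w_\ld}\bigl(1+(w_\ld')^2\bigr)\to1\); this comes from Proposition~\ref{limit.model}, \eqref{eqn:asymptotic-C0-refined} and the \(C^1\)-by-\(C^0\) bound, while \(\br c^{-1}\le\phixaxis/\phi\le\ul c^{-1}\) and the explicit formulas for \(\pl_t\br\ld_\ep\) and \(\pl_t\ul\ld_\ep\) supply the \(1\mp\ep\) slack, exactly as in your bookkeeping.
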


\begin{proof}
Consider \(w_{\ld(t)}(x)\) for any smooth \(\ld(t)>0\) with \(\ld(t)\to 0\) as \(t\to\infty\).  By \eqref{eqn:barrier},
\begin{align*}
    &\quad \frac{\pl}{\pl t}w_{\ld(t)}(x) - \frac{1}{\phi}\frac{w_{{\ld}(t)}''}{1+\big(w_{\ld(t)}'\big)^2} \\
    &= \pl_\ld w_\ld\bigr|_{\ld=\ld(t)}\cdot\pl_t\ld(t) + \frac{\phixaxis}{\phi}\frac{\ld(t)\cdot w_{\ld(t)}}{1+\big(w_{\ld(t)}'\big)^2} \\
    &= (\pl_\ld w_\ld)\cdot \biggl[ \pl_t\ld + \frac{\phixaxis}{\phi}\frac{1}{1+\big(w_{\ld}'\big)^2} \frac{w_{\ld}}{\|w_\ld\|_{C^0}\cdot\mr{w}}\cdot\frac{\|\pl_\ld w_\ld\|_{C^0}\cdot\mr{w}}{(\pl_\ld w_\ld)} \frac{\mu\|w_\ld\|_{C^0}}{\ld^2\|\pl_\ld w_\ld\|_{C^0}}\cdot\Bigl(\frac{\ld^3}{\mu}\Bigr) \biggr] ~.
\end{align*}
According to Proposition~\ref{limit.model}, \eqref{eqn:asymptotic-C0-refined} and Theorem \ref{thm:barrier-summary},
\[ \frac{1}{1+\big(w_{\ld}'\big)^2} \frac{w_{\ld}}{\|w_\ld\|_{C^0}\cdot\mr{w}}\cdot\frac{\|\pl_\ld w_\ld\|_{C^0}\cdot\mr{w}}{(\pl_\ld w_\ld)} \frac{\mu\|w_\ld\|_{C^0}}{\ld^2\|\pl_\ld w_\ld\|_{C^0}} \stackrel{\ld\to0}{\longrightarrow} 1 \]
uniformly for \(-1\leq x\leq 1\).  Since \(\bar{c}^{-1}\leq\frac{\phixaxis}{\phi}\leq\ul{c}^{-1}\),
\[ \pl_t \br{\ld}_{\ep}(t) = -\frac{1-\ep}{\bar{c}\mu}\bigl(\br{\ld}_{\ep}(t)\bigr)^3 \quad\text{and}\quad \pl_t \ul{\ld}_{\ep}(t) = -\frac{1+\ep}{\ul{c}\mu}\bigl(\ul{\ld}_{\ep}(t)\bigr)^3 ~. \]
This completes the proof of the proposition.
\end{proof}

\begin{thm} \label{thm:main}
    For any $\Lambda>0$, there exists a constant \(c > 0\) with the following property.  Suppose that \(L_0\) is a circle-invariant Lagrangian \(2\)-sphere passing through \(p_1\) and \(p_n\) such that \(\pr(L_0) = \{(x,u_0(x),0):-1\leq x\leq 1\}\) where \(u_0\) is concave and satisfies 
    \begin{align} \label{eqn:main-condition}
        |u_0'(x)|\leq \frac{1}{c} \qquad\text{and}\qquad {-u_0''(x)} \leq \Lambda\cdot u_0(x)\cdot \phi(x,u_0(x),0) ~.
    \end{align}
    Then, the Lagrangian mean curvature flow \(\{L_t\}\) exists for all time, and 
    \begin{align} \label{eqn:condition-later-time}
        {-u''(x,t)} \leq 4\Lambda\cdot u(x,t)\cdot \phi(x,u(x,t),0)
    \end{align}
    for all $t\geq0$.
    Moreover, the mean curvature converges to \(0\) uniformly as \(t\to\infty\) while
    \begin{align} \label{eqn:blow-up-rate}
        \sqrt{\frac{\mu}{2\bar{c}}} \leq \liminf_{t\to\infty}\frac{\log\max|A(\,\cdot\,,t)|}{\sqrt{t}} \leq \limsup_{t\to\infty}\frac{\log\max|A(\,\cdot\,,t)|}{\sqrt{t}} \leq \sqrt{\frac{\mu}{2\ul{c}}} ~;
    \end{align}
    see Proposition~\ref{prop:sup-sub-solution} for the constants \(\bar{c}\), \(\ul{c}\), \(\mu\). 
\end{thm}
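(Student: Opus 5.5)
The argument combines the barrier family of Proposition~\ref{prop:sup-sub-solution}, used to pin down the height of the curve, with the test function $Q$ of Corollary~\ref{cor:useful-equation}, used to control $u''$ by $u\phi$.

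\textbf{Step 1: trapping $u$ between barriers.} By Lemma~\ref{lem:LMCF-curve-graphical} and Lemma~\ref{lem:concavity}, $u(\cdot,t)$ stays graphical, concave, positive, with $|u'|\le 1/c$, as long as the flow exists. Fix $\ep$ and let $t_0$ be as in Proposition~\ref{prop:sup-sub-solution}. Since $u_0$ is concave and positive, $u_0\ge \delta(1-x^2)$ for some $\delta>0$. By Theorem~\ref{thm:barrier-summary}, $w_\ld\le \tfrac{n\ld}{2}(1-x^2)$, and by Lemma~\ref{lem:refined-C0-bound}, $w_\ld\ge c_3(1-x^2)$ for $\ld$ bounded away from $0$. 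I would time-shift the barriers, comparing $u(x,t)$ with $w_{\ul\ld_\ep(t+T_1)}$ from below and with $w_{\br\ld_\ep(t-T_2)}$ from above, choosing $T_1$ large, and $T_2$ so that $\br\ld_\ep$ is large enough that $w$ dominates the data at the starting time. The upper comparison needs some care: $\br\ld<\frac{1}{1+\mass}$ restricts how big the upper barrier can be, so one first runs the flow to a time at which $u$ is small, using for instance the bound $u\le \sup u_0$. The smallness hypothesis $|u_0'|\le 1/c$ presumably enters exactly here, making the initial curve lie below some admissible $w_\ld$.

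Lemma~\ref{lem:avoidance} then gives $w_{\ul\ld}\le u\le w_{\br\ld}$. Its hypotheses hold for the following reasons:
\begin{itemize}
\item the barriers are positive at each $a_i$;
\item they have bounded $C^2$ norm away from the $a_i$, and $u$ never touches $a_i$ because it lies above the positive lower barrier.
\end{itemize}

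\textbf{Step 2: controlling $u''$.} I would apply Lemma~\ref{lem:max-principle} to $Q$ from Corollary~\ref{cor:useful-equation}, with $g$ chosen so that $Q\ge0$ is equivalent to $-u''/(1+(u')^2)\le \phi u/g$. Taking $g(t)=1/(4\Ld)$ constant up to the factor $1+(u')^2\le 1+c^{-2}$, the hypothesis \eqref{eqn:main-condition} gives $Q(\cdot,0)\ge0$, and $Q$ vanishes at $\pm1$. At a negative spatial minimum we have $Q'=0$ and $Q''\ge0$, so the right-hand side of \eqref{eqn:main-test} reduces to
\[
\bigl[-c_0u/\phi-2(u')^2/(1+(u')^2)+c_0Q/\phi\bigr]P .
\]
This is bounded below by a multiple of $Q$ once the terms with $u$ and $u'$ are absorbed. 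Those terms are small because $u'$ and $u$ are small, which again uses the constant $c$; writing $P=(u-Q)/g$ helps here. This yields \eqref{eqn:condition-later-time}.

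\textbf{Step 3: long-time existence.} From \eqref{eqn:condition-later-time}, $|u''|\lesssim u\phi$. Near $a_i$ we have $u\gtrsim w_{\ul\ld}>0$ and $\phi\lesssim 1/u$, so $u''$ is bounded. Near $\pm1$, $u\phi$ is bounded since $u\lesssim|x\mp1|$. With $u(a_i,t)>0$, Lemma~\ref{lem:LMCF-curve-graphical} then gives existence on every finite time interval.

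\textbf{Step 4: curvature asymptotics.} Lemma~\ref{lem:second-fform-mean-curvature} gives $|H|^2=\kp^2/\phi\lesssim u^2\phi$. This is at most $u\to0$ near the $a_i$ and $O(u^2)$ elsewhere, so $H\to0$ uniformly.

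For $|A|$, the dominant term at $x=a_i$ is $\phi^{-3/2}\pl_y\phi\approx \phi^{-3/2}u^{-2}\approx u^{-1/2}$. Hence $\log\max|A|\approx -\tfrac12\log u(a_i,t)$, up to also bounding the maximum elsewhere by a similar estimate. By Lemma~\ref{lem:C0-asymptotic}, $-\log\|w_\ld\|\sim\mu/\ld$, and $\ld(t)\sim\sqrt{c\mu/(2t)}$. This gives
\[
-\tfrac12\log u \approx \tfrac12\sqrt{2t\mu/c},
\]
which produces the constants $\sqrt{\mu/(2\bar c)}$ and $\sqrt{\mu/(2\ul c)}$ in \eqref{eqn:blow-up-rate} after letting $\ep\to0$; the exact factor needs checking.

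\textbf{Main obstacle.} I expect the main obstacle to be Step 2, namely the sign bookkeeping in \eqref{eqn:main-test} with a suitable $g$. The upper barrier initialization in Step 1 is a secondary difficulty.
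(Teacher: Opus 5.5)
\textbf{Gap in Step~2.} Step~2, which is the heart of the proof, does not work as stated. With $g$ constant, at a negative spatial minimum of $Q$ you only get
\[
\frac{\pl Q}{\pl t} \geq c_0\frac{Q}{\phi}\,P-\Bigl(c_0\frac{u}{\phi}+\frac{2(u')^2}{1+(u')^2}\Bigr)P ~.
\]
The first term is $\geq CQ$ on finite time intervals, since $\phi P$ and $1/\phi$ are bounded there. The second term, however, is a source term of fixed negative sign. Writing $P=(u-Q)/g$ splits it into two pieces:
\begin{itemize}
\item $\bigl(c_0\frac{u}{\phi}+\frac{2(u')^2}{1+(u')^2}\bigr)\frac{Q}{g}$, which is harmless;
\item the residual $-\bigl(c_0\frac{u}{\phi}+\frac{2(u')^2}{1+(u')^2}\bigr)\frac{u}{g}<0$, which carries no factor of $Q$.
\end{itemize}
Lemma~\ref{lem:max-principle} needs $\pl_tQ\geq cQ$ at such points. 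This residual cannot be absorbed into $cQ$ however small $u$ and $u'$ are, because $Q$ may be arbitrarily close to $0$ at the minimum. So smallness alone does not give \eqref{eqn:condition-later-time}.

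\textbf{The missing idea.} You need to let $g$ decrease in time so that $-\pl_tg$ pays for the source. The paper takes $g(0)=\frac{1}{\Ld}$ and
\[
-\pl_tg(t)=\Bigl(c_0\sup\frac{1}{\phi}+2\Bigr)\bigl(\|\bar w(\,\cdot\,,t+t_0)\|_{C^0}+\|\bar w(\,\cdot\,,t+t_0)\|^2_{C^1}\bigr) ~.
\]
This dominates $c_0u/\phi+2(u')^2$ because $u\leq\bar w$ and $|u'|\leq\|\bar w'(\,\cdot\,,t+t_0)\|_{C^0}$. The latter bound follows from concavity of $u$ and comparing end-point slopes with the barrier. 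You need this decay of $u'$; the bound $|u'|\leq 1/c$ you invoke would make $g$ reach $0$ in finite time.

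For $g$ to stay $\geq\frac{1}{2\Ld}$, the rate must be integrable with tail below $\frac{1}{2\Ld}$. This is where Section~\ref{sec:asymptotic} enters. By Lemma~\ref{lem:C0-asymptotic}, $\|\bar w(\,\cdot\,,t)\|_{C^0}=e^{-(\mu+o(1))/\bar\ld(t)}$ decays like $e^{-C\sqrt{t}}$, and $\|\bar w\|_{C^1}\lesssim\|\bar w\|_{C^0}$. This requirement is what fixes $t_0$, and hence $c$, in terms of $\Ld$. Then:
\begin{itemize}
\item $g(0)=\frac1\Ld$ gives $Q(\,\cdot\,,0)\geq0$ from \eqref{eqn:main-condition};
\item $g\geq\frac{1}{2\Ld}$ together with $1+(u')^2\leq2$ gives exactly the factor $4\Ld$ in \eqref{eqn:condition-later-time}.
\end{itemize}

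\textbf{Two smaller points.} First, the upper barrier should be set up by fixing $t_0$ first. Then choose $c$ with $\min\{1-x,1+x\}\leq c\,\bar w(x,t_0)$, which is possible by Lemma~\ref{lem:refined-C0-bound}. This gives $u_0\leq\frac1c\min\{1-x,1+x\}\leq\bar w(\,\cdot\,,t_0)$. ``Running the flow until $u$ is small'' is not available before you have an upper barrier.

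Second, $c$ is fixed using a single value of $\ep$. Letting $\ep\to0$ therefore requires restarting the argument at a late time $t_2$ at which $|u'(\,\cdot\,,t_2)|$ is below the threshold for $(4\Ld,\ep)$. At that restart, \eqref{eqn:condition-later-time} serves as the new second hypothesis.
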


\begin{proof}
{\textit{Step 1: choosing \(c_1\) and \(g(t)\)}}.
Let \(c_0\) be the constant in Corollary~\ref{cor:useful-equation}.  For any \(\ep\in(0,\frac12]\), consider the supersolution \(\bar{w}(x,t;\ep)\) and subsolution \(\ul{w}(x,t;\ep)\) to \eqref{eqn:LMCF-graphical} for \(t\geq t_0(\ep)\), produced by Proposition~\ref{prop:sup-sub-solution}.
By enlarging $t_0=t_0(\Ld,\ep)>0$, we may assume
\begin{align} \label{condition:constant-2}
    \frac{1}{2\Ld}  > \Bigl( c_0\sup_{\Om\cap(\BR^2\times\{0\})}\frac{1}{\phi} + 2 \Bigr)\cdot \int_{0}^\infty \bigl[ \|\bar{w}(\,\cdot\,,\tau+t_0;\ep)\|_{C^0([-1,1])} + \|\bar{w}(\,\cdot\,,\tau+t_0;\ep)\|^2_{C^1([-1,1])} \bigr]\dd\tau ~,
\end{align}
and \(\|\bar{w}(\,\cdot\,,t+t_0;\ep)\|_{C^1([-1,1])} \leq 1\) for every \(t\geq0\).

Then, let
\begin{align}
    c_1(\Ld,\ep) &= \sup_{-1<x<1}\frac{\min\{1-x,1+x\}}{\bar{w}(x,t_0;\ep)} ~.
\end{align}
Note that
\begin{align} \label{condition:constant-1}
    \min\{1-x,1+x\}\leq c_1(\Ld,\ep)\cdot\bar{w}(x,t_0;\ep) \quad\text{for every } x\in[-1,1] ~.
\end{align}
The existence of such \(t_0\) and $c_1$ is guaranteed by Lemma~\ref{lem:refined-C0-bound}, Theorem~\ref{thm:barrier-summary} and Lemma~\ref{lem:C0-asymptotic}.

Let
\begin{align} \label{fct:test-coefficient}
    g(t) &= \frac1\Ld - \Bigl( c_0\sup_{\Om\cap(\BR^2\times\{0\})}\frac{1}{\phi} + 2 \Bigr)\cdot \int_{0}^t \bigl[ \|\bar{w}(\,\cdot\,,\tau+t_0;\ep)\|_{C^0([-1,1])} + \|\bar{w}(\,\cdot\,,\tau+t_0;\ep)\|^2_{C^1([-1,1])} \bigr]\dd\tau ~.
\end{align}

{\textit{Step 2: long-time existence}}.  Now, let \(c = c_1(\Ld,\frac12)\).  In this step and the next step, write \(\bar{w}(x,t;\frac12)\) as \(\bar{w}(x,t)\), and \(\ul{w}(x,t;\frac12)\) as \(\ul{w}(x,t)\) for brevity.  Denote by \(T\in(0,\infty]\) the maximal existence time of the Lagrangian mean curvature flow.  Since \(u_0\) is non-trivial and concave, \eqref{eqn:barrier-C0-estimate} implies that there exists a \(t_1>t_0\) such that \(u_0(x)\geq\ul{w}(x,t_1)\) for \(x\in[-1,1]\).  By Lemma~\ref{lem:avoidance}, \(u(x,t)\geq\ul{w}(x,t+t_1)\) for every \(t\in[0,T)\).  In particular, \(u(a_i,t)>0\) for \(t\in[0,T)\) and \(i=2,\ldots,n-1\).  In other words, the curve does not reach \((a_2,0),\ldots,(a_{n-1},0)\) in finite time.

The conditions \eqref{eqn:main-condition}, \eqref{condition:constant-1} and concavity imply that \(u_0(x) \leq \bar{w}(x,t_0)\) for \(x\in[-1,1]\).  Again by Lemma~\ref{lem:avoidance}, \(u(x,t)\leq \bar{w}(x,t+t_0)\) for all \(t\in[0,T)\).

Recall that Lemma~\ref{lem:concavity} asserts that \(u''(x,t)\leq 0\).  It follows from \(u(x,t)\leq \bar{w}(x,t+t_0)\) and Remark~\ref{rmk:end-point-derivative} that \(0\leq u'(-1,t)\leq \bar{w}'(-1,t+t_0)\) and \(0\geq u'(1,t)\geq \bar{w}'(1,t+t_0)\).  Hence, \(\|u'(x,t)\|_{C^0([-1,1])}\leq \|\bar{w}'(x,t+t_0)\|_{C^0([-1,1])}\).  According to Theorem~\ref{thm:barrier-summary}, \(\|\bar{w}(\,\cdot\,,t)\|_{C^1([-1,1])}\to0\) as \(t\to\infty\), and hence
\(\|u(\,\cdot\,,t)\|_{C^1([-1,1])}\to0\) as  \(t\to\infty\).

We estimate the first three terms on the right-hand side of \eqref{eqn:main-test}:
\begin{align*}
    &\quad -\pl_t g - c_0\frac{u}{\phi} - \frac{2(u')^2}{1+(u')^2} \\
    &\geq \bigl( c_0\sup\frac{1}{\phi} + 2 \bigr)\cdot \bigl( \|\bar{w}(x,t+t_0)\|_{C^0([-1,1])} + \|\bar{w}(x,t+t_0)\|^2_{C^1([-1,1])} \bigr)
    - c_0\frac{u}{\phi} - 2(u')^2 \\
    &\geq 0 ~.
\end{align*}
Now, the maximum principle applies to \(Q(x,t)\) defined by \eqref{fct:main-test}, and \(Q(x,t)\) is always non-negative.  Thus,
\begin{align} \label{eqn:u-double-prime-estimate}
    0\leq -u''(x,t) \leq \frac{u\cdot\phi(x,u,0)}{g(t)}(1+(u')^2) ~.
\end{align}
Since \(g(t)\geq \frac{1}{2\Ld}\), \(y\cdot\phi(x,y,0)\) is uniformly bounded on \(\Om\), and \(u'\) is bounded by the \(C^1\)-norm of \(\bar{w}\), we find that \(u''\) is uniformly bounded on \((-1,1)\times[0,T)\).  According to Lemma~\ref{lem:LMCF-curve-graphical}, \(T\) must be \(\infty\).  The bound \eqref{eqn:condition-later-time} is a direct consequence of \eqref{eqn:u-double-prime-estimate}.

{\textit{Step 3: mean curvature and second fundamental form}}.
By \eqref{eqn:u-double-prime-estimate}
\begin{align}
    |H| &= \frac{1}{\phi^{\frac{1}2}}\frac{-u''}{\big(1+(u')^2\bigr)^{\frac{3}2}} \leq \frac{\sqrt{u\cdot\phi(x,u,0)}}{g(t)} \sqrt{u} \leq c_2\,\|\bar{w}(x,t+t_0)\|^{\frac{1}{2}}_{C^0([-1,1])}
\end{align}
for some \(c_2 > 0\).  It follows that \(H\to 0\) uniformly as \(t\to\infty\).

It remains to analyze the second fundamental form.  According to \cite{Lotay-Oliveira-1}*{Proposition~4.6} and \cite{Lotay-Oliveira-2}*{Proposition~4.1},
\begin{align} \label{eqn:fform-sandwich}
    \frac{1}{c_3}\Bigl(\phi^{-\frac{1}{2}}|u''| + \phi^{-\frac{3}{2}}\bigl|\dd\phi(\bfn)\bigr| + \phi^{-\frac{3}{2}}\bigl|\dd\phi(\pl_z)\bigr|\Bigr) 
    &\leq |A| \leq {c_3}\Bigl(\phi^{-\frac{1}{2}}|u''| + \phi^{-\frac{3}{2}}\bigl|\dd\phi(\bfn)\bigr| + \phi^{-\frac{3}{2}}\bigl|\dd\phi(\pl_z)\bigr| \Bigr)
\end{align}
for some \(c_3>0\), where \(\bfn\) is the unit normal of the plane curve.
From the proof of Lemma~\ref{lem:LMCF-curve-graphical}, both \(\phi^{-\frac{1}{2}}|u''|\) and \(\phi^{-\frac{3}{2}}\bigl|\dd\phi(\pl_z)\bigr|\) remain bounded, and only \(\phi^{-\frac{3}{2}}\dd\phi(\bfn) = -\phi^{\frac{1}{2}}\dd\phi^{-1}(\bfn)\) could blow up as \(t\to\infty\).  Moreover, it is bounded for \(x\in[-1,-1+\dt]\) and \(x\in[1-\dt,1]\), where\footnote{\(a_1 = -1\) and \(a_n = 1\) correspond to smooth points of the infinite-time limit.} \(\dt = \frac{1}{10}\min\{a_2-a_1,a_n-a_{n-1}\}\).

The concavity of \(u\) implies that
\[ u(x,t)\geq \frac{\dt}{2}\|u(\,\cdot\,,t)\|_{C^0([-1,1])} \quad \text{for }x\in[-1+\dt,1-\dt] ~.\]
By the same argument as that in the proof of Lemma~\ref{lem:concavity}, \(|\dd\phi^{-1}|\) is bounded, and thus
\begin{align*}
    \bigl|\phi^{\frac{1}{2}}\dd\phi^{-1}(\bfn)\bigr| &\leq c_4 \phi^{\frac{1}{2}} \leq c_5\bigl(u(x,t)\bigr)^{-\frac{1}{2}}
\end{align*}
for some constants \(c_4,c_5\).  It follows that
\begin{align} \label{eqn:upper-bound-fform-main}
    \sup_{|x|\leq 1-\dt} \bigl|\phi^{\frac{1}{2}}\dd\phi^{-1}(\bfn)\bigr| &\leq c_6\|u(\,\cdot\,,t)\|^{-\frac{1}{2}}_{C^0([-1,1])}
\end{align}
for some constant \(c_6 = c_6(\dt)\).  On the other hand, \(u'(x,t)\to0\) uniformly as \(t\to\infty\), and
\begin{align*}
    \lim_{y\to0^+}(\pl_x\phi^{-1},\pl_y\phi^{-1})(a_i,y,0) &= (0,2)
\end{align*}
for \(i=2,\ldots,n-1\).  It follows that there is a \(c_7 > 0\) such that
\begin{align} \label{eqn:lower-bound-fform-main}
    \bigl|\phi^{\frac{1}{2}}\dd\phi^{-1}(\bfn)\bigr|_{\text{at } x=a_i} &\geq c_7\|u(\,\cdot\,,t)\|^{-\frac{1}{2}}_{C^0([-1,1])}
\end{align}
for \(i=2,\ldots,n-1\).  Putting \eqref{eqn:fform-sandwich}, \eqref{eqn:upper-bound-fform-main} and \eqref{eqn:lower-bound-fform-main} together gives
\begin{align} \label{eqn:fform-sandwich-C0}
    \frac{1}{c_8} \leq \|u(\,\cdot\,,t)\|^{\frac{1}{2}}_{C^0([-1,1])}\cdot\max|A(\,\cdot\,,t)| \leq c_8
\end{align}
for some \(c_8 > 1\).  Since \(\|u(\,\cdot\,,t)\|_{C^0([-1,1])}\) is sandwiched by \(\bar{w}(x,t+t_0)\) and \(\ul{w}(x,t+t_1)\), it follows from \eqref{eqn:fform-sandwich-C0} and Lemma~\ref{lem:C0-asymptotic} that
\begin{align*}
    & \limsup_{t\to\infty}\frac{\log\max|A(\,\cdot\,,t)|}{\sqrt{t}} \leq \frac{1}{2}\lim_{t\to\infty}\frac{\ul{\ld}_{\frac12}(t+t_1)\log\|\ul{w}(\,\cdot\,,t+t_1)\|^{-1}_{C^0}}{\sqrt{t}\cdot\ul{\ld}_{\frac12}(t+t_1)} = \sqrt{\frac{1+\frac12}{2\ul{c}}\mu} \quad\text{and} \\
    & \liminf_{t\to\infty}\frac{\log\max|A(\,\cdot\,,t)|}{\sqrt{t}} \geq \frac{1}{2}\lim_{t\to\infty} \frac{\bar{\ld}_{\frac12}(t+t_0)\log\|\bar{w}(\,\cdot\,,t+t_0)\|^{-1}_{C^0}}{\sqrt{t}\cdot\bar{\ld}_{\frac12}(t+t_0)} = \sqrt{\frac{1-\frac12}{2\bar{c}}\mu} ~.
\end{align*}

{\textit{Step 4: proof of \eqref{eqn:blow-up-rate}}}.
We have already proved the long-time existence, and \(u(\,\cdot\,,t)\to0\) as \(t\to\infty\) in \(C^1\).  For every \(\ep\in(0,\frac12)\), we can find \(t_2\) sufficiently large such that \(|u'(x,t_2)|\leq\frac{1}{c_1(4\Ld,\ep)}\).  Repeating the arguments of Step~2 and Step~3 (with \(u(x,t_2)\) as the initial date) implies that
\begin{align*}
    \sqrt{\frac{(1-\ep)\mu}{2\bar{c}}} \leq \liminf_{t\to\infty}\frac{\log\max|A(\,\cdot\,,t)|}{\sqrt{t}} \leq \limsup_{t\to\infty}\frac{\log\max|A(\,\cdot\,,t)|}{\sqrt{t}} \leq \sqrt{\frac{(1+\ep)\mu}{2\ul{c}}} ~.
\end{align*}
Since this holds true for every \(\ep\in(0,\frac12)\), this completes the proof of the theorem.
\end{proof}

\begin{rmk}
    By \eqref{eqn:hk-metric}, the neck size, namely the circumference of the circle corresponding to \(u(a_i,t)\) for \(i=2,\ldots,n-1\), is of the same order as
    \begin{align*}
        \phi^{-\frac12}{(a_i,u(a_i,t))} &\sim \|u(\,\cdot\,,t)\|^{\frac{1}{2}}_{C^0([-1,1])} \sim \frac{1}{\max|A(\,\cdot\,,t)|} ~.
    \end{align*}
\end{rmk}

Suppose that the ambient hyperk\"ahler manifold is ALE (\(\mass=0\)) or ALF (\(\mass>0\)), and that \(p_1,\cdots,p_n\) are precisely the fixed points of the circle action.  Then \(\Om=\BR^3\) and \(\phi(x,y,0)=\phixaxis(x,y)\); see \cite{Lotay-Oliveira-1}*{Examples~2.4 and~2.5}.  Consequently, \(\bar{c} = 1 = \ul{c}\).

\begin{cor} \label{cor:sharp-rate}
    When the hyperk\"ahler manifold is ALE (\(\mass=0\)) or ALF (\(\mass>0\)), and \(p_1,\ldots,p_n\) are all fixed points of the circle action, assertion \eqref{eqn:blow-up-rate} of Theorem~\ref{thm:main} becomes
    \begin{align}
        \lim_{t\to\infty}\frac{\log\max|A(\,\cdot\,,t)|}{\sqrt{t}} = \sqrt{\frac{\mu}{2}} ~.
    \end{align}
\end{cor}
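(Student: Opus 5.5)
The corollary follows by specializing Theorem~\ref{thm:main}. The plan is to show that in this setting \(\bar{c} = \ul{c} = 1\), so the two outer constants in \eqref{eqn:blow-up-rate} coincide.

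First, in the ALE/ALF case with \(p_1,\ldots,p_n\) being all the fixed points, the Gibbons--Hawking potential is globally
\[
    \phi(x,y,z) = \mass + \frac12\sum_{i=1}^n\frac{1}{\sqrt{(x-a_i)^2+y^2+z^2}}
\]
on \(\Om=\BR^3\); see \cite{Lotay-Oliveira-1}*{Examples~2.4 and~2.5}. In particular \(\phireg\equiv\mass\) is constant, so \(\pl_y\phireg=0\) and the constant \(c_0\) in Corollary~\ref{cor:useful-equation} may be taken to be \(0\). Restricting to \(z=0\) gives \(\phi(x,y,0)=\phixaxis(x,y)\) identically. Hence the ratio \(\phi(x,y,0)/\phixaxis(x,y)\equiv1\), and by definition \(\bar{c}=\ul{c}=1\).

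Second, I need to check that the hypotheses used in Proposition~\ref{prop:sup-sub-solution} and Theorem~\ref{thm:main} still make sense when \(\Om\) is all of \(\BR^3\) rather than a bounded neighborhood. The relevant points are the following.

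\begin{itemize}
    \item Boundedness of \(\phireg\) and its derivatives is trivial, since \(\phireg\) is constant.
    \item Boundedness of \(1/\phi\) holds because \(\phi\geq\mass\) and, when \(\mass=0\), the flow stays in a compact region. By Lemma~\ref{lem:concavity} and the upper barrier, the curves lie below \(\bar{w}\), and \(\bar{w}\) is bounded, so \(\sup 1/\phi\) is taken over a bounded set.
    \item The barrier family \(w_\ld\) of Theorem~\ref{thm:barrier-summary} was built using exactly \(\phixaxis\) with this \(\mass\), for \(\ld<\frac1{\mass+1}\). This is why the parameter \(\mass\) was included in Section~\ref{sec:barrier}, and it is the main point to verify.
\end{itemize}

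With \(\bar{c}=\ul{c}=1\), inequality \eqref{eqn:blow-up-rate} reads
\[
    \sqrt{\frac{\mu}{2}}\leq\liminf_{t\to\infty}\frac{\log\max|A(\,\cdot\,,t)|}{\sqrt{t}}\leq\limsup_{t\to\infty}\frac{\log\max|A(\,\cdot\,,t)|}{\sqrt{t}}\leq\sqrt{\frac{\mu}{2}}~.
\]
So the limit exists and equals \(\sqrt{\mu/2}\).

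The only place needing care is the first of these checks: that restricting to the bounded region actually swept out by the flow is legitimate. The main obstacle is confirming that, when \(\mass=0\), the bound on \(\sup1/\phi\) in \eqref{condition:constant-2} is taken over the region the curve occupies. Since \(\phi\to0\) only at spatial infinity, and the curves stay in a bounded set by the barrier comparison, this holds.
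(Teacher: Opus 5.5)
Your proposal is correct and is the paper's argument: in this setting $\phi(x,y,0)=\phixaxis(x,y)$, so $\br{c}=\ul{c}=1$ and the two outer bounds in \eqref{eqn:blow-up-rate} coincide. The issue you flag about $\sup\frac{1}{\phi}$ does not arise, by your own observation that $c_0=0$: the terms $c_0\frac{u}{\phi}$ and $c_0\frac{Q}{\phi}$ in \eqref{eqn:main-test} drop out, so the factor $c_0\sup\frac{1}{\phi}+2$ in \eqref{condition:constant-2} can simply be replaced by $2$. In any case, bounding the region via $\br{w}$ is mildly circular, since $t_0$ depends on that supremum; one would instead fix a bounded $\Om$ beforehand, which is allowed because $\phi/\phixaxis\equiv1$ on any $\Om$.
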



\end{document}